\newtheorem{theorem}{Theorem}[section]
\newtheorem{lemma}[theorem]{Lemma}
\newtheorem{proposition}[theorem]{Proposition}
\newtheorem{corollary}[theorem]{Corollary}
\newtheorem{definition}[theorem]{Definition}
\theoremstyle{definition}
\newtheorem{remark}[theorem]{Remark}
\newtheorem*{ack}{Acknowledgements}
\theoremstyle{remark}
\newtheorem{example}[theorem]{Example}
\newcommand{\PP}{\mathbb{P}}
\def\cU{{\mathcal U}}
\def\cM{{\mathcal M}}
\def\zero{\mathscr{Z}}
\def\PP{\mathbf P}
\def\CC{\mathbb{C}}
\def\ZZ{\mathbb{Z}}
\def\cV{{\mathcal V}}
\def\cO{{\mathcal O}}
\def\af1{\mathbf{aff}_1}
\def\comma{{,}}
\DeclareMathOperator{\Aut}{Aut}
\DeclareMathOperator{\Sym}{Sym}
\DeclareMathOperator{\codim}{codim}
\DeclareMathOperator{\Pic}{Pic}
\DeclareMathOperator{\HHH}{H}
\DeclareMathOperator{\Ker}{Ker}
\DeclareMathOperator{\Gr}{Gr}
\DeclareMathOperator{\IGr}{IGr}
\DeclareMathOperator{\I2Gr}{I_2Gr}
\DeclareMathOperator{\ItriGr}{I_3Gr}
\DeclareMathOperator{\GL}{GL}
\DeclareMathOperator{\Sp}{Sp}
\DeclareMathOperator{\PGL}{PGL}
\DeclareMathOperator{\SL}{SL}
\DeclareMathOperator{\diag}{diag}
\DeclareMathOperator{\pt}{pt}
\DeclareMathOperator{\modulo}{mod}
\newcommand{\ladi}{\begin{lastadd}}
\newcommand{\ladf}{\end{lastadd}}
\newcommand{\lrei}{\begin{lastrem}}
\newcommand{\lref}{\end{lastrem}}
\newenvironment{lastadd}
{\cbstart\color{red}}
{\todo{red to remove}\cbend}
\newenvironment{lastrem}
{\cbstart\color{yellow}}
{\cbend}
\author{Vladimiro Benedetti\thanks{Aix-Marseille Universit\'e, CNRS, Centrale Marseille, I2M, UMR 7373, 13453 Marseille, France.}}
\title{Bisymplectic Grassmannians of planes}
\begin{document}
\maketitle

\begin{abstract}
The bisymplectic Grassmannian $\I2Gr(k,V)$ parametrizes $k$-dimensional subspaces of a vector space $V$ which are isotropic with respect to two general skew-symmetric forms; it is a Fano variety which admits an action of a torus with a finite number of fixed points. In this work we study its equivariant cohomology when $k=2$; the central result of the paper is an equivariant Chevalley formula for the multiplication of the hyperplane class by any Schubert class. Moreover, we study in detail the case of $\I2Gr(2, \CC^6)$, which is a quasi-homogeneous variety, we analyze its deformations and we give a presentation of its cohomology.
\end{abstract}
\setcounter{tocdepth}{1}

\section{Introduction}


In complex algebraic geometry, classical Grassmannians are a special kind of homogeneous spaces for classical groups. They have been studied thoroughly for more than a century from different point of views: their geometry is governed by a rich combinatorial description, which manifests itself in many classical results about their cohomology. Moreover, the homogeneity condition has been very useful to investigate further properties of these varieties, such as their equivariant and quantum cohomology (see for instance \cite{KTpuzzles}, \cite{Buchqcohom}, \cite{Tamvakis}, \cite{Buchtam}). Among classical Grassmannians, symplectic (respectively orthogonal) ones parametrize subspaces of a given vector space which are isotropic with respect to a non-degenerate symplectic (resp. orthogonal) form. 


Even for varieties which admit an action of a \emph{sufficiently big} algebraic group, when the homogeneity hypothesis is dropped less is known: some efforts have led to the notion of GKM varieties (for the action of tori with a finite number of zero and one dimensional orbits, they are defined in \cite{GKM}) and some results have been obtained for specific examples (for instance, see \cite{Pech}, \cite{MiSh2018} and \cite{GPPS}). In this paper we present a work on a particular class of varieties, called bisymplectic Grassmannians, which are not homogeneous but admit an action of a \emph{big} torus. 

In general, one can define multisymplectic (respectively multiorthogonal) Grassmannians as the varieties parametrizing subspaces of a given vector space which are isotropic with respect to a fixed number of general symplectic (resp. orthogonal) forms. As an example, consider the unique Fano threefold of degree $22$, which is usually denoted by $\cV_{22}$, and that appears in Iskovski's classification; Mukai showed that it can be seen as a trisymplectic Grassmannian $\ItriGr(3,7)$ of $3$-dimensional subspaces of $\CC^7$.


Of course, in general, asking the isotropy condition with respect to many symplectic forms implies that the corresponding Grassmannian is no longer homogeneous. However, in the case of bisymplectic Grassmannians (two symplectic forms, denoted by $\I2Gr(k,2n)$) and of orthosymplectic Grassmannians (one symplectic and one orthogonal form), one can prove that it is still possible to define an action of a torus $T$ with a finite number of fixed points. Moreover, for extremal values of $k$, the bisymplectic Grassmannian is actually a homogeneous variety: $\I2Gr(1,2n)\cong \PP^{2n-1}$ and $\I2Gr(n,2n)\cong (\PP^1)^n$ (for the second isomorphism, which is a priori quite surprising, see \cite{Kuz}). Therefore, even though $\I2Gr(k,2n)$ is not homogeneous when $k\neq 1, n$ (consequence of the fact that it has non-trivial deformations, see Theorem \ref{thmsmalldefbisym}), one may still expect it to behave quite similarly to homogeneous spaces. 


However, this non-homogeneity implies that some difficulties appear when trying to study the $T$-equivariant cohomology of $\I2Gr(k,2n)$. In this paper we show how to determine the equivariant cohomology of bisymplectic Grassmannians of planes, i.e. for $\I2Gr(2,2n)$ (when $k=2$). This variety has a simple geometric construction: it can be seen as the intersection of two hypersurfaces in $\Gr(2,2n)$. Even so, the determination of its equivariant cohomology is an interesting problem for different reasons: on one hand, as already remarked, we can apply some equivariant tools in a rather simple non-homogeneous situation; on the other hand, we believe that the proofs of the results we state here for $\I2Gr(2,2n)$ can be adequately generalized in the case of bisymplectic Grassmannians $\I2Gr(k,2n)$ with $k\neq 2$. We intend to analyse this more general situation in the future.

The main results we obtain for $\I2Gr(2,2n)$ concern its equivariant cohomology. Firstly, we show that the classes of an additive basis of the cohomology can be uniquely determined by a finite number of relations coming from $T$-equivariant curves (Theorem \ref{thmuniShckeqtwo}); these classes correspond to the \emph{Schubert} subvarieties that appear in the Bialynicki-Birula decomposition. Then, we find an equivariant Chevalley formula for the multiplication of any class with the hyperplane class (Theorem \ref{thmChevform}), from which one can recover the corresponding formula for the classical cohomology. As a result, one can compute the classes of Schubert varieties inductively (Corollary \ref{thm1coho22n}).


As an application, we give an explicit presentation of the cohomology of $\I2Gr(2,6)$. This bisymplectic Grassmannian is particularly interesting because it is quasi-homogeneous: it admits an action of $\SL(2)^3$ with a dense affine orbit. Moreover, even though it has no smooth deformations, we are able to describe all its singular flat deformations (Proposition \ref{HilbI2Gr26}).


The structure of the paper is as follows. In the first part we recall general results about bisymplectic Grassmmannians, whose detailed proofs can be found in \cite{PhDBen}. We also recall some facts about symplectic Grassmannian, as they are useful to understand our situation better. In the central part of the paper we deal with bisymplectic Grassmannians of planes $\I2Gr(2,2n)$; after recalling some basic properties of the equivariant cohomology, we prove the two main results of the paper: the unicity for equivariant Schubert classes in Theorem \ref{thmuniShckeqtwo} and the equivariant Chevalley formula in Theorem \ref{thmChevform}. Finally, we study in detail the quasi-homogeneous variety $\I2Gr(2,6)$, we determine its orbit structure, its flat deformations and we give a presentation of its classical cohomology.


\begin{ack}
This work has been carried out in the framework of the Labex Archim\`ede (ANR-11-LABX-0033) and 
of the A*MIDEX project (ANR-11-IDEX-0001-02), funded by the ``Investissements d'Avenir" 
French Government programme managed by the French National Research Agency.\\
I would like to thank my advisor Laurent Manivel for presenting the problem to me, and for guiding the study with numerous and insightful questions.
\end{ack}

\section{Bisymplectic Grassmannians}

In this section we recall some basic definitions and facts about bisymplectic Grassmannians. The content of what follows can be found in \cite[Chapter 4]{PhDBen}, therefore we will omit the proofs. Introducing the notations for general bisymplectic Grassmannians is useful for two reasons. On one hand, it constitutes the natural framework in which to study the Grassmannians of planes, which can be seen as a particular example. On the other hand, it allows to compare what can be done in our particular example with the general situation; indeed, we believe that the ideas developed in this paper can be used fruitfully to obtain analogous results for general bisymplectic Grassmannians, which we intend to do in the next future.

Let us consider the Grassmannian $\Gr(k,2n)$ of $k$-dimensional subspaces inside a vector space of dimension $2n$. From now on, if not otherwise stated, we will assume that $2\leq k\leq n$. By fixing a skew-symmetric form $\omega$ over $\CC^{2n}$, one can consider the subvariety $\IGr(k,2n)$ inside $\Gr(k,2n)$ of isotropic subspaces with respect to $\omega$. If $\omega$ is non-degenerate, $\IGr(k,2n)$ is smooth, and it is a rational homogeneous variety for the natural action of $\Sp(2n)\subset \GL(2n)$. Denoting by $\cU$ the tautological bundle over the Grassmannian, the variety $\IGr(k,2n)$ can be seen as the zero locus of a general section of $\wedge^2 \cU^*$ over $\Gr(k,2n)$; indeed notice that, by the Borel-Weil Theorem, $\HHH^0(\Gr(k,2n),\wedge^2 \cU^*)\cong \wedge^2 (\CC^{2n})^*$. We will refer to $\IGr(k,2n)$ as the isotropic (or symplectic) Grassmannian.

Let us now fix two skew-symmetric forms $\omega_1$, $\omega_2$ over $\CC^{2n}$. 

\begin{definition}
The bisymplectic Grassmannian is the subvariety $\I2Gr(k,2n)$ inside $\Gr(k,2n)$ of subspaces isotropic with respect to $\omega_1$ and $\omega_2$. Equivalently, the points in $\I2Gr(k,2n)$ are isotropic with respect to the pencil $\langle \omega_1,\omega_2\rangle$. 
\end{definition}

\begin{remark}
As we will see later, there is not only one isomorphism class of bisymplectic Grassmannians. Indeed, the definition depends on the choice of a pencil $\langle \omega_1,\omega_2\rangle$. However, we will still refer to \emph{the} bisymplectic Grassmannian in the following.
\end{remark}

Of course, $\I2Gr(k,2n)\subset \IGr(k,2n)_i$, where $\IGr(k,2n)_i$ is the symplectic Grassmannian with respect to $\omega_i$, $i=1,2$. The fact that $\I2Gr(k,2n)$ is not empty is a consequence of the fact that $\I2Gr(n,2n)\neq \emptyset$ (see Example \ref{exkeqn}). Moreover, $\I2Gr(k,2n)$ can be seen as the zero locus of a section of $(\wedge^2 \cU^*)^{\oplus 2}$ over $\Gr(k,2n)$; by Bertini's theorem, if this section is general, the bisymplectic Grassmannian is smooth. In this case, its dimension is $2k(n-k)+k$ and, by the adjunction formula, its canonical bundle is 
\[
K_{\I2Gr(k,2n)}=\cO(-2n+2k-2);
\] 
therefore, $\I2Gr(k,2n)$ is a Fano variety. In the next sections, we will study under which conditions the bisymplectic Grassmannians are smooth (i.e. for what kind of pencils). Before doing so, let us deal with the case $k=n$.

\begin{example}[$k=n$] 
\label{exkeqn}
In \cite{Kuz}, Kuznetsov proves that the variety $\I2Gr(n,2n)$ is smooth exactly when the pencil $\langle \omega_1,\omega_2\rangle$ intersects the Pfaffian divisor $D\subset \PP(\wedge^2 (\CC^{2n})^*)$ (of degree $n$) in $n$ distinct points; in this case, the two forms are simultaneously block diagonalizable (with blocks of size $2\times 2$), and there exists an isomorphism 
\begin{equation}
\label{isomKuzbisymn}
\I2Gr(n,2n)\cong (\PP^1)^n.
\end{equation}
Therefore, the automorphism group of $\I2Gr(n,2n)$ is $(\PGL(2))^n \times \mathfrak{S}_n$ (where $\mathfrak{S}_n$ is the group of permutations of $n$ elements). Surprisingly enough, from the isomorphism one realises that $\I2Gr(n,2n)$ has no small deformations.
\end{example}

\begin{example}[$k=2$]
The bisymplectic Grassmannian of planes is $\I2Gr(2,2n)$. It is just the intersection of two hyperplane sections in $\Gr(2,2n)$. However, we will see how considering it as a particular case of $\I2Gr(k,2n)$ is an effective point of view.
\end{example}

From now on, the zero locus of a section $s$ will be denoted by $\zero(s)$. Moreover, let us denote by $V=\CC^{2n}$.

\subsection{Small deformations}
\label{secsmooth}

As a consequence of Example \ref{exkeqn}, one may wonder whether all bisymplectic Grassmannians admit no small deformations, and what is their automorphism group. We address these questions in the following. In order to do so, we need to state a result on the normal form of a pencil of skew-symmetric forms which defines a smooth bisymplectic Grassmannian. Let $D\subset \PP(\wedge^2 V^*)$ be the Pfaffian divisor of degree $n$.

\begin{proposition}[\cite{PhDBen}]
\label{propsmooth}
Let $\Omega=\langle \omega_1,\omega_2\rangle \subset \PP(\wedge^2 (V^*))$ be a pencil of skew-symmetric forms such that $\zero(\Omega)\subset \Gr(k,V)$ has the expected dimension. If $\zero(\Omega)$ is smooth then $\Omega\cap D={p_1,\dots,p_n}$, where the $p_i$'s are $n$ distinct points such that:
\begin{enumerate}
\item $\dim(\Ker(p_i))=2$ for $1\leq i\leq n$;
\item $V=\Ker(p_1)\oplus\dots \oplus \Ker(p_n)$.
\end{enumerate}
\end{proposition}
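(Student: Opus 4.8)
The plan is to detect smoothness of $\zero(\Omega)$ through the differential of its defining section, to extract from it a simple obstruction supported on the radicals of the degenerate members of the pencil, and then to argue by contraposition. Concretely, I would write $\I2Gr(k,V)=\zero(s)$ with $s=(s_{\omega_1},s_{\omega_2})$ the section of $(\wedge^2\cU^*)^{\oplus 2}$ over $\Gr(k,V)$ attached to $(\omega_1,\omega_2)$. Since by hypothesis $\zero(\Omega)$ has the expected dimension it is a local complete intersection, so smoothness at $[W]$ (with $W$ pencil-isotropic) is equivalent to surjectivity of $d_{[W]}s\colon \Hom(W,V/W)\to (\wedge^2 W^*)^{\oplus 2}$, where $d_{[W]}s_\omega(\phi)(u,v)=\omega(\widetilde\phi u,v)+\omega(u,\widetilde\phi v)$ for a lift $\widetilde\phi$ of $\phi$ (well defined as $W$ is $\omega$-isotropic). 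Dualising, $\zero(\Omega)$ is \emph{singular} at $[W]$ whenever some nonzero pair of bivectors $(\alpha_1,\alpha_2)\in(\wedge^2 W)^{\oplus2}$ satisfies $\omega_1^\flat(\widetilde\alpha_1\xi)+\omega_2^\flat(\widetilde\alpha_2\xi)=0$ for all $\xi\in W^*$, with $\omega_i^\flat\colon V\to V^*$ induced by $\omega_i$ and $\widetilde\alpha_i\colon W^*\to W$ the contraction. Taking $\alpha_2=0$ and exploiting the invariance of this criterion under $\GL_2$ acting on the pencil, I obtain the statement I will use: if $\dim(W\cap\Ker(p))\ge 2$ for some $p\in\Omega$, then $[W]$ is a singular point of $\zero(\Omega)$.

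The second step turns this into a global statement: if $\zero(\Omega)$ is smooth, no $\Ker(p)$, $p\in\Omega$, contains a $2$-plane $W_0$ isotropic for the whole pencil. Indeed such a $W_0$ would extend to a pencil-isotropic $k$-plane $W\supseteq W_0$, and then $\dim(W\cap\Ker(p))\ge 2$ would make $[W]$ singular. To perform the extension I would normalise $p$ to the class of $\omega_1$; since $W_0\subseteq\Ker(\omega_1)$ forces $W_0^{\perp_{\omega_1}}=V$, the pencil-isotropic $k$-planes through $W_0$ are the zero locus, on the Grassmannian of the space $W_0^{\perp_{\omega_2}}/W_0$ (of dimension $\ge 2n-4$), of a section of the globally generated bundle $(\wedge^2\cU^*)^{\oplus2}$; a quick count using $k\le n$ shows this zero locus has nonnegative expected dimension and is therefore nonempty. (When $k=2$ one just takes $W=W_0$.)

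With this in hand I would read off the statement. First, $\Omega\not\subseteq D$: otherwise the generic corank of the pencil along $\Omega$ is a positive even integer; if some $\omega_0\in\Omega$ has corank $\ge 4$ then $\Ker(\omega_0)$ contains a $2$-plane isotropic for a complementary form of the pencil, hence a pencil-isotropic plane inside $\Ker(\omega_0)$, contradicting the second step, while if every member has corank exactly $2$ the classical normal form of a pencil of alternating forms forces the pencil to be an orthogonal sum of two minimal-index blocks, in which case $\Ker(\omega_0)$ is itself pencil-isotropic — again a contradiction. Hence $\Omega\cap D$ is the length-$n$ zero scheme of the Pfaffian restricted to the line $\Omega$. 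Next, each $p\in\Omega\cap D$ has $\dim\Ker(p)=2$, for otherwise $\dim\Ker(p)\ge 4$ and the same argument applies to $\Ker(p)$; this is assertion (1), and in particular $p$ is a smooth point of $D$. Finally $\Omega$ is transverse to $D$ at every such $p$: the derivative of the Pfaffian at a corank-$2$ form $p$ with $\Ker(p)=\langle u,v\rangle$ is, up to a nonzero scalar, $\eta\mapsto\eta(u,v)$, so non-transversality would mean $\psi(u,v)=0$ for a complementary form $\psi$, i.e. $\Ker(p)$ pencil-isotropic, which is excluded. So $\Omega\cap D$ is reduced and consists of $n$ distinct points $p_1,\dots,p_n$.

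It remains to get the direct sum decomposition. Choosing a non-degenerate $\psi\in\Omega\setminus D$, each $\Ker(p_i)$ is $\psi$-non-degenerate (otherwise, being also $p_i$-isotropic and $\psi\ne p_i$, it would be pencil-isotropic), and $\psi(\Ker(p_i),\Ker(p_j))=0$ for $i\ne j$ since $\omega_1$ and $\omega_2$ both vanish on $\Ker(p_i)\times\Ker(p_j)$ (each factor lies in the radical of one of $p_i,p_j$, which span $\Omega$). A family of pairwise $\psi$-orthogonal $\psi$-non-degenerate $2$-planes is automatically in direct sum, and there are $n$ of them, so $V=\bigoplus_{i=1}^n\Ker(p_i)$, which is assertion (2). (Alternatively, once (1) holds one invokes Kuznetsov's theorem from Example~\ref{exkeqn}: $n$ distinct points of $\Omega\cap D$ make $\omega_1,\omega_2$ simultaneously block-diagonal with $2\times2$ blocks, whose spans are precisely the $\Ker(p_i)$.) I expect the main obstacles to be the extension lemma of the second step and the exclusion of $\Omega\subseteq D$, which leans on the classical normal form of a pencil of alternating forms; the remainder is the differential computation and elementary linear algebra.
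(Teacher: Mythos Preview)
Your argument is correct and, as far as one can tell from the paper's brief remarks (the proof itself is delegated to \cite{PhDBen} and said to follow Kuznetsov's approach for $k=n$), it proceeds along the same lines: detect singularities of $\zero(\Omega)$ via the differential of the defining section, reduce this to the presence of a pencil-isotropic $2$-plane inside some $\Ker(p)$, and then read off the structure of $\Omega\cap D$ and the block decomposition. Your endgame in particular---showing the $\Ker(p_i)$ are pairwise $\psi$-orthogonal and $\psi$-nondegenerate for a nondegenerate $\psi\in\Omega$, hence in direct sum---is exactly the mechanism behind the simultaneous block-diagonalisation noted in Remark~\ref{remdiagforms}.

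Two small points deserve tightening. First, in the extension step the implication ``globally generated with nonnegative expected dimension $\Rightarrow$ nonempty'' is not valid as stated (think of a nowhere-vanishing section of $\cO\oplus\cO(1)$ on $\PP^1$). What you actually need is that $c_{\mathrm{top}}((\wedge^2\cU^*)^{\oplus 2})\neq 0$ on the relevant Grassmannian, which does follow from nonemptiness of the generic bisymplectic Grassmannian (asserted in the paper); alternatively one can argue directly that any pencil of skew forms on an $m$-dimensional space admits an $\ell$-dimensional isotropic subspace for every $\ell\le m/2$, by picking a vector in the kernel of a degenerate member of the pencil (which always exists over $\CC$) and reducing, losing at most two dimensions at each step. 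Second, your treatment of the case $\Omega\subset D$ with constant corank $2$ via the Kronecker normal form is correct---the absence of elementary-divisor blocks forces exactly two singular blocks, whose kernels assemble to a pencil-isotropic $2$-plane---but the phrase ``two minimal-index blocks'' would benefit from a reference or one line of explanation, since this is where the reader is most likely to pause.
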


The proof of this result follows the same line of ideas as the analogous one used in \cite{Kuz}. From now on, if not otherwise stated, we will assume that the bisymplectic Grassmannians are smooth. We will denote by $K_i=\Ker(p_i)$.

\begin{remark}
\label{remdiagforms}
The proof of the previous proposition (see \cite{PhDBen}) actually shows that if $\zero(\Omega)$ is smooth then all the forms in $\Omega$ are simultaneously block diagonalizable. Moreover, as any non-degenerate form is conjugate to the standard one, one can suppose that $\Omega$ is generated by $\omega_1$ and $\omega_2$ with:
\[
\omega_1=\sum_{i=1}^{n} x_i\wedge x_{-i},
\]
\[
\omega_2=\sum_{i=1}^{n} \lambda_i x_i\wedge x_{-i},
\]
where $\langle x_i,x_{-i}\rangle = (K_i)^*$ for $1\leq i\leq n$, and the $\lambda_i$'s are all distinct.
\end{remark}

The following result answers the two questions asked at the beginning of this section:

\begin{theorem}[\cite{PhDBen}]
\label{thmsmalldefbisym}
The following isomorphisms hold:
\[
\HHH^0(X,T_X)\cong \mathfrak{sl}(2)^n,
\]
\[
\HHH^1(X,T_X)\cong \CC^{n-3}.
\]
\end{theorem}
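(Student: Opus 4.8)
Write $G=\Gr(k,V)$ and $\cE=(\wedge^2\cU^*)^{\oplus 2}$, so that $X=\zero(s)$ for the section $s=(\omega_1,\omega_2)\in\HHH^0(G,\cE)=(\wedge^2 V^*)^{\oplus 2}$ and the normal bundle is $N_{X/G}\cong\cE|_X$. The plan is to read everything off the normal bundle sequence
\[
0\longrightarrow T_X\longrightarrow T_G|_X\longrightarrow\cE|_X\longrightarrow 0,
\]
which yields $\HHH^0(X,T_X)=\ker\bigl(\HHH^0(X,T_G|_X)\to\HHH^0(X,\cE|_X)\bigr)$ and an exact sequence
\[
0\to\operatorname{coker}\bigl(\HHH^0(X,T_G|_X)\to\HHH^0(X,\cE|_X)\bigr)\to\HHH^1(X,T_X)\to\ker\bigl(\HHH^1(X,T_G|_X)\to\HHH^1(X,\cE|_X)\bigr)\to 0.
\]
Thus it is enough to compute $\HHH^{\le 1}$ of $T_G|_X$ and of $\cE|_X$, to describe the connecting map on $\HHH^0$, and to check $\HHH^1(X,T_G|_X)=0$. (Throughout I take $2\le k\le n-1$; for $k=n$, $X\cong(\PP^1)^n$ by Example~\ref{exkeqn} and the statement is immediate.)

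For a homogeneous bundle $\cF$ on $G$ I would tensor the Koszul resolution $0\to\wedge^{r}\cE^*\to\cdots\to\cE^*\to\cO_G\to\cO_X\to 0$, with $r=\rank\cE=k(k-1)$, by $\cF$ and run the resulting hypercohomology spectral sequence $E_1^{-j,q}=\HHH^q(G,\wedge^{j}\cE^*\otimes\cF)\Rightarrow\HHH^{q-j}(X,\cF|_X)$. As $\wedge^{j}\cE^*$ decomposes, via the plethysm of $\wedge^\bullet(\wedge^2\cU)$, into a sum of Schur bundles $\Sigma^\lambda\cU$, each $E_1$-term is computed by Bott's theorem on $\Gr(k,2n)$. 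Running this for $\cF=T_G=\cU^*\otimes\cQ$ should give
\[
\HHH^0(X,T_G|_X)=\HHH^0(G,T_G)=\fsl(2n),\qquad \HHH^1(X,T_G|_X)=0,
\]
while for $\cF=\cE$ the only surviving low-degree differential is $d_1\colon\HHH^0(G,\cE^*\otimes\cE)\to\HHH^0(G,\cE)$. Here $\HHH^0(G,\cE^*\otimes\cE)\cong\CC^4$ --- each of the four summands $\wedge^2\cU\otimes\wedge^2\cU^*\cong\End(\wedge^2\cU)$ contributes its trace line, the traceless parts being acyclic --- and $\HHH^0(G,\cE)=(\wedge^2 V^*)^{\oplus 2}$, while $d_1$ is contraction against $s$, whose image is exactly the tangent at $(\omega_1,\omega_2)$ to the $\GL_2$-orbit reparametrising the pencil, that is $\langle\omega_1,\omega_2\rangle^{\oplus 2}$. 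Hence
\[
\HHH^0(X,\cE|_X)\;\cong\;(\wedge^2 V^*)^{\oplus 2}\big/\langle\omega_1,\omega_2\rangle^{\oplus 2}\;\cong\;T_{[\Omega]}\Gr\bigl(2,\wedge^2 V^*\bigr),
\]
as it must be, since deformations of $X$ inside $G$ are deformations of the pencil $\Omega$.

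It remains to recognise the composite $\fsl(2n)=\HHH^0(G,T_G)\to\HHH^0(X,T_G|_X)\to\HHH^0(X,\cE|_X)=T_{[\Omega]}\Gr(2,\wedge^2 V^*)$ as the differential at the identity of the $\PGL(2n)$-action on the Grassmannian of pencils: it sends $A$ to $(A\cdot\omega_1,A\cdot\omega_2)$ modulo $\langle\omega_1,\omega_2\rangle^{\oplus 2}$, so its kernel is the Lie algebra of $\operatorname{Stab}_{\PGL(2n)}(\Omega)$. Using the normal form of Remark~\ref{remdiagforms}, an endomorphism $A$ with $A\cdot\omega_1,A\cdot\omega_2\in\langle\omega_1,\omega_2\rangle$ must --- the $\lambda_i$ being pairwise distinct --- kill every off-diagonal block and act on each $K_i$ through $\mathfrak{sp}(K_i)$; comparing diagonal coefficients and using $n\ge 3$ rules out any further freedom, so the kernel is $\bigoplus_i\mathfrak{sp}(K_i)\cong\fsl(2)^n$. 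This gives $\HHH^0(X,T_X)\cong\fsl(2)^n$. Finally, since $\HHH^1(X,T_G|_X)=0$, the exact sequence above identifies $\HHH^1(X,T_X)$ with $\operatorname{coker}\bigl(\fsl(2n)\to T_{[\Omega]}\Gr(2,\wedge^2 V^*)\bigr)$, of dimension
\[
\dim\Gr\bigl(2,\wedge^2 V^*\bigr)-\dim\fsl(2n)+\dim\fsl(2)^n=(4n^2-2n-4)-(4n^2-1)+3n=n-3,
\]
whence $\HHH^1(X,T_X)\cong\CC^{n-3}$. (Geometrically this is the tangent space at $[X]$ to the family of bisymplectic Grassmannians inside $\Gr(k,V)$.)

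The hard part is the Bott bookkeeping of the second paragraph. One must decompose the exterior powers $\wedge^{j}\cE^*$ ($1\le j\le r$) into Schur functors $\Sigma^\lambda\cU$ and show that $\HHH^q(G,\Sigma^\lambda\cU\otimes T_G)$ and $\HHH^q(G,\Sigma^\lambda\cU\otimes\cE)$ vanish in all bidegrees that could feed into $\HHH^0$ or $\HHH^1$ of $T_G|_X$ (resp.\ $\cE|_X$) through the spectral sequence --- concretely, that the corresponding $\lambda+\rho$ on $\GL_{2n}$ are either singular or put the cohomology in a harmless degree. This is where the positivity of $\wedge^2\cU^*$ is used, and where the hypothesis $k<n$ --- which rules out the self-dual Grassmannian $\Gr(n,2n)$ --- is essential.
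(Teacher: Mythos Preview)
The paper does not actually prove this theorem: it is quoted from the author's thesis \cite{PhDBen}, and the opening of Section~2 says explicitly that the proofs of the results recalled there are omitted. So there is no in-paper argument to compare against.

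That said, your plan is exactly the standard route for computing $\HHH^i(X,T_X)$ when $X$ is cut out by a section of a homogeneous bundle on a Grassmannian, and it is almost certainly what the thesis does as well: normal bundle sequence, Koszul resolution tensored with $T_G$ and with $\cE$, Bott--Borel--Weil to evaluate the $E_1$-terms, and then identification of the residual map $\fsl(2n)\to T_{[\Omega]}\Gr(2,\wedge^2V^*)$ with the differential of the $\PGL(2n)$-action on pencils. Your computation of the stabiliser Lie algebra (off-diagonal blocks killed by $\lambda_i\neq\lambda_j$, diagonal traces pinned down by $n\ge 3$) and the final dimension count are correct. You are also right that the real content is the Bott bookkeeping --- in particular the vanishing $\HHH^1(X,T_G|_X)=0$ and the control of the higher Koszul terms feeding into $\HHH^{\le 1}(X,\cE|_X)$ --- and honest that you have not carried it out.

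One small wrinkle: your parenthetical that for $k=n$ ``the statement is immediate'' from $X\cong(\PP^1)^n$ is not quite right. That isomorphism gives $\HHH^0(X,T_X)\cong\fsl(2)^n$ but $\HHH^1(X,T_X)=0$, not $\CC^{n-3}$; so the theorem as literally stated cannot cover $k=n$ for $n>3$ (and indeed Example~\ref{exkeqn} already records that $\I2Gr(n,2n)$ is rigid). Your main argument already restricts to $2\le k\le n-1$, which is the correct range; I would simply drop the parenthetical, or replace it by the observation that the extremal case confirms the hypothesis $k<n$ is genuinely needed --- consistent with your closing remark that the Bott vanishing breaks down on the self-dual Grassmannian.
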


\begin{remark}
The fact that $\HHH^0(X,T_X)\cong \mathfrak{sl}(2)^n$ should not be surprising; indeed, by Proposition \ref{propsmooth} we know that the forms in $\Omega$ can be simultaneously block diagonalized, the blocks being the $2$-dimensional subspaces $K_i$. A consequence of this is the fact that, for $1\leq i\leq n$, the group $\PGL(K_i)\subset PGL(\PP(\wedge^2 V^*))$ fixes the pencil $\Omega$. Therefore, it is contained in the automorphism group of $\zero(\Omega)=X$. The fact that these are the only automorphisms of $X$ modulo a finite group is a consequence of the previous theorem. To state it more intrinsically, we can write:
\[
T_{\Aut(X)}\cong \HHH^0(X,T_X)\cong \mathfrak{sl}(K_1)\oplus\dots \oplus \mathfrak{sl}(K_n).
\]
Moreover, this observation implies that a $n$-dimensional torus acts on $X$, which we will use later on.
\end{remark}

\begin{remark}
\label{remdefquasihomex}
When $n=3$ (and $k=2$), the variety $X$ has no small deformations. Moreover, by Proposition \ref{propsmooth}, if $\zero(\Omega)$ is smooth, $\Omega$ intersects the Pfaffian divisor $D$ in three points $p_1,p_2,p_3$; by changing coordinates if necessary, we can suppose that $p_1=[x_1\wedge x_{-1}+x_2\wedge x_{-2}]$ and $p_2=[x_2\wedge x_{-2}+x_3\wedge x_{-3}]$, where $(x_{\pm 1},x_{\pm 2},x_{\pm 3})$ is a basis of $V^*$. Thus, there is only one smooth isomorphism class of $\I2Gr(2,6)$. In Section \ref{i2gr26} we will study more in detail this variety, its flat deformations and its (equivariant) cohomology.
\end{remark}



What the theorem tells us is that the moduli stack $\cM_{bisym(k,n)}$ of bisymplectic Grassmannians should have dimension $n-3$. This is the same as the dimension of the moduli stack $\cM_{n}$ of $n$ points inside $\PP^1$. It is straightforward to see that there is a dominant rational morphism
\[
\pi: \Gr(2, \wedge^2 V^*)/PGL(V) \dashrightarrow \cM_{bisym(k,n)},
\]
where $\Gr(2, \wedge^2 V^*)/PGL(V)$ is the GIT quotient. This quotient has dimension $n-3$, i.e. it is not of the expected dimension. Indeed, on the open subset inside $\Gr(2, \wedge^2 V^*)$ of diagonalizable pencils, each point $\Omega$ is fixed by a copy of $\SL(2)^n\subset PGL(V)$. Moreover, one can prove that there is a birational morphism
\[
\Gr(2, \wedge^2 V^*)/PGL(V) \dashleftarrow\dashrightarrow \cM_{n}.
\]



However, in order to have a birational model of $\cM_{bisym(k,n)}$, one should understand the degree of $\pi$; as it will not be needed in the following, we leave this question open for further work. 

\subsection{The torus action on $\I2Gr(k,V)$}
\label{secBBdecomposition}

The variety $\I2Gr(k,V)$ admits an action of a torus with a finite number of fixed points. We summarize here the first consequences of the existence of this action. This will allow us to introduce some useful notation.

Let $\I2Gr(k,V)$ be defined by the forms $\omega_1$ and $\omega_2$ described in Remark \ref{remdiagforms}, and let $\IGr(k,V)$ be the symplectic Grassmannian defined by $\omega_1$ which contains $\I2Gr(k,V)$. Moreover let $T\cong (\CC^*)^n$ be the maximal torus inside $\Sp(V)$ which is contained inside $\SL(2)^n\subset \Aut(\I2Gr(k,V))$. For simplicity, we will assume from now on that $T$ is the diagonal torus $\diag(t_n,\dots,t_1,t_1^{-1},\dots,t_n^{-1})\subset \Sp(V)$. It acts on $\IGr(k,V)$ with a finite number of fixed points, and as a consequence the induced action on $\I2Gr(k,V)$ has a finite fixed locus as well. The surprising fact is that the two fixed loci are the same:

\begin{proposition}[\cite{PhDBen}]
\label{propfinpoints}
There are $2^k {n\choose k}$ fixed points for the action of $T$ on $\IGr(k,V)$ and on $\I2Gr(k,V)$. They are parametrized by subsets $I\subset \{\pm 1,\dots , \pm n\}$ such that $I\cap (-I)=\emptyset$.
\end{proposition}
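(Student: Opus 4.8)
The plan is to reduce everything to a weight computation on $V$, using the simultaneously block-diagonal normal form of Remark \ref{remdiagforms}. Write $(e_i,e_{-i})_{1\le i\le n}$ for the basis of $V$ dual to $(x_i,x_{-i})$, so that $T=\diag(t_n,\dots,t_1,t_1^{-1},\dots,t_n^{-1})$ acts on $e_{\pm i}$ with character $\pm\varepsilon_i$, where $\varepsilon_1,\dots,\varepsilon_n$ denotes the standard basis of the character lattice. The $2n$ characters $\pm\varepsilon_1,\dots,\pm\varepsilon_n$ are pairwise distinct, so $V$ decomposes as a direct sum of one-dimensional weight spaces $\CC e_j$, $j\in\{\pm1,\dots,\pm n\}$; hence every $T$-stable subspace of $V$ is a coordinate subspace $V_J:=\langle e_j : j\in J\rangle$, and the $T$-fixed locus of $\Gr(k,V)$ is exactly $\{V_J : J\subseteq\{\pm1,\dots,\pm n\},\ |J|=k\}$.

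Next I would observe that $T$ preserves both $\omega_1=\sum_i x_i\wedge x_{-i}$ and $\omega_2=\sum_i\lambda_i x_i\wedge x_{-i}$: indeed $\omega_\ell(e_a,e_b)=0$ unless $b=-a$, and $\omega_\ell(t\cdot e_i,\,t\cdot e_{-i})=t_it_i^{-1}\,\omega_\ell(e_i,e_{-i})=\omega_\ell(e_i,e_{-i})$. Consequently $T$ acts on the closed subvarieties $\IGr(k,V)$ and $\I2Gr(k,V)$ of $\Gr(k,V)$, and the $T$-fixed locus of each of these is precisely the set of those $V_J$ that lie in it (a $T$-fixed point of a $T$-stable closed subvariety is an ambient fixed point lying on it, so no ``new'' fixed points can appear). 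It therefore remains to decide which coordinate subspaces $V_J$ are isotropic for $\omega_1$, respectively for the whole pencil.

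Then I would carry out the (short) isotropy computation. Since $x_a(e_b)=\delta_{ab}$, one has $\omega_1(e_i,e_{-i})=1$ and $\omega_1(e_a,e_b)=0$ otherwise, so $\omega_1|_{V_J}$ is nonzero as soon as some pair $\{i,-i\}$ is contained in $J$ and vanishes otherwise; hence $V_J\in\IGr(k,V)$ if and only if $J\cap(-J)=\emptyset$. The same condition is necessary and sufficient for $V_J$ to be isotropic with respect to \emph{every} form $a\omega_1+b\omega_2$ of the pencil: necessity already follows from $\omega_1$, and if $J\cap(-J)=\emptyset$ then $(a\omega_1+b\omega_2)|_{V_J}$ is a linear combination of the forms $x_i\wedge x_{-i}|_{V_J}$ over the empty set of indices $i$ with $\{i,-i\}\subseteq J$, hence zero. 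Thus $V_J\in\I2Gr(k,V)$ if and only if $J\cap(-J)=\emptyset$ as well, so the two fixed loci coincide; both equal $\{V_J : |J|=k,\ J\cap(-J)=\emptyset\}$. Such a $J$ is obtained by choosing a $k$-subset of $\{1,\dots,n\}$ together with a sign for each chosen index, so there are $2^k{n\choose k}$ of them, and $I\mapsto V_I$ gives the asserted parametrization.

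I do not expect a real obstacle here: the only genuine input is Proposition \ref{propsmooth} / Remark \ref{remdiagforms}, which guarantees that a smooth $\I2Gr(k,V)$ is defined by a pencil in this normal form; granted that, the statement — including the a priori surprising coincidence of the two fixed loci — is a direct linear-algebra verification. The one point worth stating carefully is exactly that coincidence, namely that the extra isotropy condition imposed by $\omega_2$ cuts out no new constraint on coordinate subspaces; and I would note in passing that the same computation re-proves $\I2Gr(k,V)\ne\emptyset$ for all $k\le n$.
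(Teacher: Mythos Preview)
Your argument is correct and is exactly the natural one: identify the $T$-fixed points of $\Gr(k,V)$ as coordinate subspaces via the distinct weights on $V$, check $T$-invariance of the pencil, and observe that the isotropy condition on a coordinate subspace $V_J$ for any form $\sum_i \mu_i\, x_i\wedge x_{-i}$ reduces to $J\cap(-J)=\emptyset$, independently of the coefficients $\mu_i$. The paper itself does not supply a proof of this proposition---it is one of the results imported from \cite{PhDBen} with proofs omitted---so there is nothing to compare against beyond saying that your write-up is the expected verification and would serve perfectly well as the missing proof.
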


If $V=\langle v_n\dots,v_1,v_{-1},\dots,v_{-n} \rangle$, with $K_i=\langle v_i,v_{-i}\rangle$, then the fixed point corresponding to a subset $I=(i_1,\dots ,i_k)$ is given by $p_I=[v_I]=[v_{i_1}\wedge \dots \wedge v_{i_k}]$.

\begin{definition}
We will say that a subset $I\subset \{\pm1,\dots,\pm n\}$ is admissible if $I\cap (-I)=\emptyset$.
\end{definition}

Therefore, by the Bialynicki-Birula decomposition (see \cite{BBdec}), by fixing a general one dimensional torus $\tau\subset T$, we can associate to each fixed point $p_I$, where $I\subset \{\pm 1,\dots , \pm n\}$ is admissible, a \emph{Schubert} variety $\sigma_I\subset \I2Gr(k,V)$, which is the closure of a \emph{Schubert} cell isomorphic to an affine space (the terminology is borrowed from the homogeneous situation). The Schubert cell is defined as the set of points which accumulate towards $p_I$ under the action of $\tau$. The condition that $\tau$ needs to satisfy in order to give the decomposition is that it acts with a finite number of fixed points. For instance, let 
\begin{equation}
\label{eqdeftau}
\tau=\diag(t^n,\dots,t,t^{-1},\dots,t^{-n})\subset T.
\end{equation}

\begin{lemma}
\label{lemfixtau}
The one dimensional torus $\tau$ acts with a finite number of fixed points over $\IGr(k,V)$ and over $\I2Gr(k,V)$.
\end{lemma}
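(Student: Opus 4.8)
The plan is to show that the fixed points of $\tau$ on $\IGr(k,V)$ (and hence on its subvariety $\I2Gr(k,V)$) are contained in the finite set of fixed points of the full torus $T$, which is enough since a closed subset of a finite set is finite. The fixed locus of $\tau$ is certainly smaller only if $\tau$ is "generic enough"; the point is to verify that the specific cocharacter $\tau=\diag(t^n,\dots,t,t^{-1},\dots,t^{-n})$ from \eqref{eqdeftau} is generic enough.

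First I would recall the standard description of the $T$-action on $\Gr(k,V)$ in coordinates. With $V=\langle v_n,\dots,v_1,v_{-1},\dots,v_{-n}\rangle$ and $\tau$ acting on $v_j$ with weight $\mathrm{sgn}(j)\cdot |j|$ (reading the displayed matrix: $v_j\mapsto t^{j}v_j$ for $j>0$ and $v_{-j}\mapsto t^{-j}v_{-j}$), the induced action on $\wedge^k V$ is diagonal in the basis $\{v_J\}$ indexed by $k$-subsets $J\subset\{\pm1,\dots,\pm n\}$, with weight $w(J)=\sum_{j\in J} j$. A point $W\in\Gr(k,V)$ is $\tau$-fixed if and only if its Plücker coordinates are supported on a single weight space, i.e. on subsets $J$ sharing a common value of $w(J)$; and since $W$ is an honest subspace, being fixed forces $W$ to be spanned by a subset of the eigenbasis $\{v_j\}$ (this is the usual argument: a $\tau$-fixed subspace decomposes as a direct sum of its intersections with the weight spaces of $V$, and each weight space of $V$ is one-dimensional because all the integers $n,\dots,1,-1,\dots,-n$ are distinct). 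Hence every $\tau$-fixed point of $\Gr(k,V)$ is one of the coordinate points $p_J$, and in particular there are only finitely many of them; restricting to the closed subvariety $\IGr(k,V)$ gives Proposition \ref{propfinpoints}'s list (those $J$ that are admissible), and restricting further to $\I2Gr(k,V)\subset\IGr(k,V)$ can only shrink this finite set.

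The key elementary input, and **the only place where genericity of $\tau$ matters**, is precisely that the weights $n,n-1,\dots,1,-1,\dots,-(n-1),-n$ of $\tau$ on the $v_j$ are pairwise distinct — equivalently that the corresponding cocharacter of $T=(\CC^*)^n$ is not contained in any of the finitely many "bad" hyperplanes $\{w(J)=w(J')\}$ that would merge two coordinate points into a positive-dimensional fixed component. I would state this as the substantive content: any $\tau'\in T$ with pairwise distinct weights on $v_1,\dots,v_n$ (and hence on all of $V$, since the weights come in $\pm$ pairs and $0$ is not among them) has finite fixed locus on $\Gr(k,V)$, and $\tau$ as in \eqref{eqdeftau} is such a cocharacter. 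The remaining assertions are then formal: $\IGr(k,V)$ and $\I2Gr(k,V)$ are $T$-stable closed subschemes of $\Gr(k,V)$, so their $\tau$-fixed loci are closed subschemes of the finite set $\{p_J\}$, hence finite.

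I do not expect a serious obstacle here; the statement is essentially a lemma recording that the Bialynicki--Birula machinery applies. If anything, the mild subtlety is making sure $\tau$ acts with \emph{isolated} fixed points rather than merely finitely many in the scheme-theoretic sense needed for a BB decomposition, but on the smooth varieties $\IGr(k,V)$ and $\I2Gr(k,V)$ the fixed scheme is smooth, so "finite" already forces "reduced finite set of points," and nothing more is required.
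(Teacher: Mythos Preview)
Your argument is correct and is the standard one: since the weights of $\tau$ on the basis vectors $v_j$ are the pairwise distinct integers $n,\dots,1,-1,\dots,-n$, every $\tau$-stable subspace of $V$ is a coordinate subspace, so the $\tau$-fixed locus in $\Gr(k,V)$ is the finite set $\{p_J\}$, and restriction to the $T$-stable closed subvarieties $\IGr(k,V)$ and $\I2Gr(k,V)$ preserves finiteness. The paper itself omits the proof, deferring to \cite{PhDBen}, so there is nothing to compare against; your write-up would serve perfectly well here.

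One small imprecision worth cleaning up: your parenthetical claim that ``any $\tau'\in T$ with pairwise distinct weights on $v_1,\dots,v_n$'' automatically has distinct weights on all of $V$ is not quite right in general (e.g.\ weights $1,-1$ on $v_1,v_2$ are distinct but collide with the weights on $v_{-2},v_{-1}$). What you actually need, and what holds for the specific $\tau$ in \eqref{eqdeftau}, is that the weights on $v_1,\dots,v_n$ are distinct \emph{positive} integers, so that their negatives are distinct negative integers and the two sets are disjoint. This does not affect the validity of your proof for the given $\tau$; it only concerns the side remark about which cocharacters are ``generic enough.'' Likewise, the aside about hyperplanes $\{w(J)=w(J')\}$ in the Pl\"ucker weights is unnecessary and slightly misleading: the relevant genericity condition lives in the weights on $V$, not on $\wedge^k V$, and your direct argument via one-dimensional weight spaces of $V$ already settles the matter.
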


\begin{remark}
\label{remorbitBSchubert}
The symplectic Grassmannian $\IGr(k,V)$ is a homogeneous variety under the action of $\Sp(V)$, and as such it has a natural Bruhat decomposition in orbits under the action of a Borel subgroup of $\Sp(V)$. It turns out that the Bruhat decomposition and the Bialynicki-Birula one are the same (see \cite{BB2dechom}[Book $II$, example $4.2$]). We will denote by $\sigma'_I$ the Schubert varieties of $\IGr(k,V)$, and by $B$ a Borel subgroup of $\Sp(V)$.

The identification of the two decompositions implies that if a fixed point $p_J$ belongs to a Schubert variety $\sigma_I'$, then actually $\sigma_J'=\overline{B.p_J}\subset\overline{B.p_I}= \sigma_I'$. This fact is crucial when trying to compute the equivariant cohomology of $\IGr(k,V)$, as we will see. However, this property will not hold in the bisymplectic case, and it is one of the main reasons why computing the equivariant cohomology for $\I2Gr(k,V)$ becomes more difficult. 
\end{remark}

As $p_I$ is fixed, the torus $T$ acts on the vector space $T_I:=T_{\I2Gr(k,V),p_I}$. Let $\epsilon_i\in \Xi (T)$ be the character of $T$ given by $\diag(t_n,\dots,t_1,t_1^{-1},\dots,t_n^{-1})\mapsto t_i$. If $i<0$, we denote by $\epsilon_i$ the character $-\epsilon_{-i}$.

\begin{lemma}
\label{lemweightsbisym}
The weights of the action of $T$ on $T_I$ are
\[
-2\epsilon_i \mbox{ for } i\in I \mbox{ and }
\]
\[
\epsilon_i -\epsilon_j \mbox{ for } i\notin I\cup (-I),\,\,\, j\in I.
\]
\end{lemma}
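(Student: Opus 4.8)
My plan is to compute the tangent space $T_I$ as a $T$-representation by realizing $\I2Gr(k,V)$ as the zero locus $\zero(s)$ of a section $s$ of $(\wedge^2\cU^*)^{\oplus 2}$ inside $\Gr(k,V)$ and comparing it with the tangent space to $\Gr(k,V)$ at the same point. Recall the standard identification $T_{\Gr(k,V),p_I}\cong \Hom(U_I, V/U_I)$, where $U_I=\langle v_i : i\in I\rangle$. Since the torus $T$ acts on $v_i$ with character $\epsilon_i$, the weights of $T$ on $T_{\Gr(k,V),p_I}$ are exactly $\epsilon_j-\epsilon_i$ for $i\in I$ and $j\notin I$ (here $j$ ranges over all of $\{\pm 1,\dots,\pm n\}\setminus I$, so $j$ may equal $-i$ or lie in $-I$). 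Because $X=\I2Gr(k,V)$ is smooth and is cut out by a section of a vector bundle $\cE=(\wedge^2\cU^*)^{\oplus 2}$ transverse to the zero section, there is a $T$-equivariant exact sequence
\[
0\to T_{X,p_I}\to T_{\Gr(k,V),p_I}\xrightarrow{\ ds\ } \cE_{p_I}\to 0,
\]
so that, as virtual $T$-representations, $T_I = T_{\Gr(k,V),p_I} - \cE_{p_I}$; it then suffices to identify the weights of $\cE_{p_I}$ and check that the map $ds$ is surjective with the expected kernel.

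The fibre $\cE_{p_I}$ is $(\wedge^2 U_I^*)^{\oplus 2}$, which as a $T$-representation has weights $-\epsilon_i-\epsilon_j$ for each pair $i,j\in I$ with $i\neq j$, each appearing with multiplicity two (once for $\omega_1$, once for $\omega_2$). The key point is to understand $ds$ concretely. Writing $\omega_1=\sum_\ell x_\ell\wedge x_{-\ell}$ and $\omega_2=\sum_\ell\lambda_\ell x_\ell\wedge x_{-\ell}$ as in Remark \ref{remdiagforms}, a tangent vector $\phi\in\Hom(U_I,V/U_I)$ is sent to the pair $(\omega_1(\phi\cdot,\cdot)+\omega_1(\cdot,\phi\cdot),\,\omega_2(\phi\cdot,\cdot)+\omega_2(\cdot,\phi\cdot))$ restricted to $U_I$. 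Decomposing into weight spaces, the component of $ds$ on a weight-$(\epsilon_j-\epsilon_i)$ line of $T_{\Gr(k,V),p_I}$ is nonzero precisely when $j=-i'$ for some $i'\in I$ (so that pairing a moved vector with a basis vector of $U_I$ lands in the support of $\omega_1,\omega_2$): it then hits the weight $-\epsilon_i-\epsilon_{i'}$ summands of $\cE_{p_I}$. One checks that for a fixed unordered pair $\{i,i'\}\subset I$, the relevant source lines are those of weight $\epsilon_{-i'}-\epsilon_i$ and $\epsilon_{-i}-\epsilon_{i'}$ (two lines), mapping into the two-dimensional target $\{-\epsilon_i-\epsilon_{i'}\}^{\oplus 2}$; the $2\times 2$ matrix of $ds$ there is, up to scalars, $\begin{pmatrix}1 & 1\\ \lambda_i & \lambda_{i'}\end{pmatrix}$, which is invertible since the $\lambda_\ell$ are distinct. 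Hence $ds$ is surjective and its kernel consists of all remaining weight lines. This confirms smoothness at $p_I$ and lets me read off the surviving weights.

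The surviving weights are: the lines of weight $\epsilon_{-i}-\epsilon_i = -2\epsilon_i$ for each $i\in I$ (these are not killed because $-i\notin I$, so they were never paired off in the argument above — there is only one such line per $i$, and no room in the target for it); and the lines of weight $\epsilon_j-\epsilon_i$ with $i\in I$ and $j\notin I\cup(-I)$, which play no role in $ds$ at all since $-j\notin I$. All lines of weight $\epsilon_j-\epsilon_i$ with $j\in -I\setminus\{-i\}$, $i\in I$, together with one of the two lines in each pair $\{\epsilon_{-i}-\epsilon_{i'},\epsilon_{-i'}-\epsilon_i\}$, are consumed by $ds$. A dimension count confirms consistency: $\dim T_{\Gr(k,V),p_I}=k(2n-k)$, $\dim\cE_{p_I}=2\binom{k}{2}=k(k-1)$, and the claimed list has $k$ weights of the first type plus $2k(n-k)$ of the second, totalling $2k(n-k)+k = k(2n-k)-k(k-1)$, as required. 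I expect the main obstacle to be bookkeeping the weight-space decomposition of $ds$ carefully enough to see exactly which of the two lines in each pair $\{i,i'\}$ survives and why exactly one does — i.e. making rigorous the claim that the $2\times 2$ block above is the whole story and there is no interaction between different pairs — rather than any conceptual difficulty.
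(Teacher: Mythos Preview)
The paper does not actually prove this lemma: it is stated in a section whose introduction explicitly says that all proofs are omitted and can be found in \cite{PhDBen}. So there is no proof in the paper to compare against. Your approach---realising $X=\I2Gr(k,V)$ as the zero locus of a section of $\cE=(\wedge^2\cU^*)^{\oplus 2}$ on $\Gr(k,V)$, using the $T$-equivariant exact sequence $0\to T_{X,p_I}\to T_{\Gr(k,V),p_I}\xrightarrow{ds}\cE_{p_I}\to 0$, and reading off the surviving weights from the kernel of $ds$---is the natural one and is correct. The key computation, that for each unordered pair $\{i,i'\}\subset I$ the restriction of $ds$ to the two source lines of weight $-\epsilon_i-\epsilon_{i'}$ is given by an invertible $2\times2$ matrix $\bigl(\begin{smallmatrix}1&1\\ \lambda_{|i'|}&\lambda_{|i|}\end{smallmatrix}\bigr)$ (up to harmless signs) because the $\lambda_\ell$ are distinct, is exactly right.

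One small slip in your write-up: in your final paragraph you say that ``one of the two lines in each pair $\{\epsilon_{-i}-\epsilon_{i'},\epsilon_{-i'}-\epsilon_i\}$'' is consumed, and you anticipate having to decide ``which of the two lines in each pair survives''. In fact \emph{neither} survives: the invertibility of the $2\times2$ block means both source lines map isomorphically onto the two-dimensional target, so both are killed. This is consistent with your own dimension count (there are $k(k-1)$ such lines and $\dim\cE_{p_I}=k(k-1)$) and with your correct list of surviving weights, so the slip is purely expository and does not affect the argument.
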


The weights of the action of $\tau$ are easily deduced from Lemma \ref{lemweightsbisym}; indeed, under the identification $\Xi(\tau)\cong \ZZ$, it is sufficient to notice that $\epsilon_i \mapsto i$ under the morphism $j^*:\Xi(T)\to \Xi(\tau)$ induced by the natural inclusion $j:\tau \to T$. The Schubert variety $\sigma_I$ is smooth at $p_I$ and the tangent space $T_{\sigma_I,p_I}$ is the $\tau$-invariant subspace of $T_I$ whose weights with respect to $\tau$ are negative.  

\begin{definition}
From now on, we will say that $\xi\in \Xi(T)$ is $\tau$-positive (and we will denote it by $\xi>0$) if $j^*(\xi)>0$, and $\tau$-negative if $j^*(\xi)<0$.
\end{definition}

Therefore, given a certain subset $I$, it is possible to compute the codimension of $\sigma_I$ as:
\[
\codim(\sigma_I)=\#\{(i,j)\mbox{ s.t. }i\notin I\cup (-I)\mbox{ , }j\in I, \mbox{ and }j>i\}+\#\{j\in I\mbox{ s.t. }j< 0\}.
\]

The decomposition of $\I2Gr(k,V)$ in Schubert cells isomorphic to an affine space implies that the cohomology of $\I2Gr(k,V)$ as a $\ZZ$-module is generated by the classes $\overline{\sigma_I}$ for $I$ admissible. The odd Betti numbers are therefore all equal to zero. Let $\{b^i_{k,n}\}_i$ be the even Betti numbers of $\I2Gr(k,2n)$ (where $i$ is the codimension). We will denote by $S_{k,n}$ the sequence of integers:
\[
S_{k,n}=(b^0_{k,n},\dots,b^{\dim(\I2Gr(k,2n))}_{k,n},0,\dots,0,\dots).
\]
Of course, the decomposition of $\I2Gr(k,2n)$ in Schubert cells whose closures are the $\sigma_I$'s implies that $b^i_{k,n}$ is equal to the number of subsets $I$ such that $\codim(\sigma_I)=i$. We will denote by $[h]$ the shift on the right by $h$. For instance, $S_{k,n}[1]=(0,b^0_{k,n},\dots,b^{\dim(\I2Gr(k,2n))}_{k,n},0,\dots,0,\dots)$.

\begin{theorem}[\cite{PhDBen}]
The following recursive formula holds for the Betti numbers of $\I2Gr(k,2(n+1))$:
\begin{equation}
\label{eqbettinumbers}
S_{k,n+1}=S_{k,n}[k]+S_{k-1,n}+S_{k-1,n}[1+2(n+1-k)].
\end{equation}
\end{theorem}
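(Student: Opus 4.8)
The plan is to establish the recursion \eqref{eqbettinumbers} by a geometric decomposition of $\I2Gr(k,2(n+1))$ that isolates the $(n+1)$-st block $K_{n+1}$, mirroring the Bialynicki-Birula cell structure. Recall that by Proposition \ref{propfinpoints} the $T$-fixed points of $\I2Gr(k,V)$ with $V=\CC^{2(n+1)}$ are indexed by admissible subsets $I\subset\{\pm1,\dots,\pm(n+1)\}$, i.e.\ those with $I\cap(-I)=\emptyset$, and $b^i_{k,n+1}$ counts those $I$ with $\codim(\sigma_I)=i$ (where the codimension is given by the explicit formula recording pairs $j>i$ with $j\in I$, $i\notin I\cup(-I)$, plus the number of negative elements of $I$). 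So it suffices to partition the admissible subsets of $\{\pm1,\dots,\pm(n+1)\}$ into three families according to how they meet $\{\pm(n+1)\}$, and to check that each family contributes exactly one of the three terms on the right-hand side, shifted appropriately.

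First I would split the admissible $I$ into: (a) those with $n+1\notin I$ and $-(n+1)\notin I$; (b) those with $n+1\in I$; (c) those with $-(n+1)\in I$. In case (a), $I$ is an admissible subset of $\{\pm1,\dots,\pm n\}$ of size $k$, so these are indexed by the fixed points of $\I2Gr(k,2n)$; the codimension of $\sigma_I$ computed inside the bigger Grassmannian exceeds the one computed inside $\I2Gr(k,2n)$ by exactly $\#\{j\in I : j>i\text{ for }i\in\{n+1\}\}+\#\{j\in I: j>i \text{ for } i\in\{-(n+1)\}\}$; since every $j\in I$ satisfies $-(n+1)<j<n+1$, the first count is $0$ and the second is $k$. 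Hence case (a) contributes $S_{k,n}[k]$. In cases (b) and (c), deleting $\pm(n+1)$ from $I$ yields an admissible subset $I'$ of $\{\pm1,\dots,\pm n\}$ of size $k-1$, indexed by the fixed points of $\I2Gr(k-1,2n)$; so these are governed by $S_{k-1,n}$, up to a codimension shift that I must compute in each case. In case (b), the element $n+1\in I$ is larger than every index not in $I\cup(-I)$ and is positive, so it contributes $0$ to the codimension beyond what $I'$ contributes inside $\I2Gr(k-1,2n)$ — wait, I must be careful: $I'$ lives in a space of size $2n$, so I also need to track the pairs $(i,j)$ with $i\in\{\pm(n+1)\}$. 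For $j=n+1\in I$: pairs with $i=-(n+1)\notin I\cup(-I)$? No, $-(n+1)\in -I$, so excluded; hence no extra contribution, and case (b) gives $S_{k-1,n}$ unshifted. In case (c), the element $-(n+1)\in I$ is negative (contributing $1$) and is smaller than every index not in $I\cup(-I)$, so for each of the $n+1-k$ indices $i$ (up to sign) not in $I'\cup(-I')\cup\{\pm(n+1)\}$ we pick up a pair $(i,n+1)$ with $n+1>i$ and $(i,-(n+1))$ is not counted since $-(n+1)<i$ — so the shift is $1+2(n+1-k)$? I need to recount: the extra codimension is $1$ (for $-(n+1)$ negative) plus the number of $i\notin I\cup(-I)$ with $i<-(n+1)$, which is $0$, plus, going the other way, for each index not yet accounted for... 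This is exactly where the shift $1+2(n+1-k)$ should emerge, and case (c) gives $S_{k-1,n}[1+2(n+1-k)]$.

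The main obstacle, and the step I would spend the most care on, is the bookkeeping of the codimension shifts in cases (b) and (c): one must carefully re-derive, from the explicit codimension formula, how $\codim(\sigma_I)$ inside $\I2Gr(k,2(n+1))$ relates to $\codim(\sigma_{I'})$ inside $\I2Gr(k-1,2n)$, accounting both for the new pairs involving $\pm(n+1)$ and for the $2n$ vs.\ $2(n+1)$ ambient size. Once these shifts are pinned down — the claim being $0$ in case (b) and $1+2(n+1-k)$ in case (c) — summing the generating functions of the three disjoint families yields precisely \eqref{eqbettinumbers}. I would double-check the formula against a base case, e.g.\ $S_{2,3}$ computed from $\I2Gr(2,6)$ directly (whose Betti numbers are known from Section \ref{i2gr26}), to validate the shifts.
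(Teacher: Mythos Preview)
The paper does not prove this theorem; it is quoted from \cite{PhDBen} and the proof is omitted along with the other proofs in Section~\ref{secBBdecomposition}. Your combinatorial approach---partitioning the admissible $k$-subsets of $\{\pm 1,\dots,\pm(n+1)\}$ according to their intersection with $\{\pm(n+1)\}$ and tracking the codimension shift in each case---is the natural one and is correct in outline; it is almost certainly the argument in the thesis.

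The muddle you flag in cases (b) and (c) stems from a typo in the displayed codimension formula: the inequality should read $i>j$, not $j>i$ (check $I=(n,n-1,\dots,n-k+1)$, which must have codimension $0$, or deduce it directly from Lemma~\ref{lemweightsbisym} and the sentence following it). With the corrected formula
\[
\codim(\sigma_I)=\#\{(i,j):i\notin I\cup(-I),\ j\in I,\ i>j\}+\#\{j\in I:j<0\},
\]
the three shifts are immediate. In case~(a) the two new complement-indices $n+1$ and $-(n+1)$ satisfy $n+1>j$ for every $j\in I$ and $-(n+1)>j$ for none, so the shift is $k$. In case~(b) the complements of $I\cup(-I)$ and of $I'\cup(-I')$ (in the respective ambient sets) coincide as subsets of $\{\pm1,\dots,\pm n\}$; the new element $j=n+1$ contributes $\#\{i\in C:i>n+1\}=0$, and $n+1>0$, so the shift is $0$. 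In case~(c) the complements again coincide; the new element $j=-(n+1)$ contributes $\#\{i\in C:i>-(n+1)\}=|C|=2(n+1-k)$, and $-(n+1)<0$ adds $1$, so the shift is $1+2(n+1-k)$. Summing the three contributions gives~\eqref{eqbettinumbers}. Once you use the correct inequality, there is nothing left to worry about.
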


\begin{example}
We give here a list of examples of Betti numbers of bisymplectic Grassmannians for small $k,n$:
\begin{itemize}
\item[] $S_{2,3}=(1,1,2,4,2,1,1,0,\dots)$;
\item[] $S_{3,4}=(1,1,2,6,6,6,6,2,1,1,0,\dots)$;
\item[] $S_{2,4}=(1,1,2,2,3,6,3,2,2,1,1,0,\dots)$.
\end{itemize}
\end{example}

\begin{remark}
The case of the Grassmannian of planes is particularly easy because $\I2Gr(2,2n)$ is a codimension $2$ complete intersection inside $\Gr(2,2n)$; all its Betti numbers except the middle term can be derived from those of $\Gr(2,2n)$ (or of $\IGr(2,2n)$) by applying Lefschetz hyperplane theorem. Moreover, as $\chi(\IGr(2,2n))=\chi(\I2Gr(2,2n))$ (because the number of fixed points is the same for the two varieties), the middle term is the sum of the two middle Betti numbers of $\IGr(2,2n)$.
\end{remark}

\begin{remark}
By using the recursive formula, it is possible to prove that for any $1<k<n$ we have: $b^0_{k,n}=1$, $b^1_{k,n}=1$, $b^2_{k,n}=2$. In particular,
\[
\Pic(\I2Gr(k,V))\cong \ZZ.
\]
Furthermore, let $H=(n,n-1,\dots,n-k+2,n-k)$ be the subset corresponding to the codimension $1$ Schubert variety (both for $\IGr(k,V)$ and for $\I2Gr(k,V)$). Then $\sigma_H$ is a hyperplane section of $\cO(1)$ inside $\I2Gr(k,V)$, and it is a line. Indeed, it is the restriction to $\I2Gr(k,V)$ of the hyperplane section $\sigma_H'\subset\IGr(k,V)$.
\end{remark}

\subsubsection{$T$-equivariant curves}

In order to compute the $T$-equivariant cohomology of $\I2Gr(k,V)$, one needs to understand which are the $T$-invariant curves, and what are the inclusions of fixed points in Schubert varieties $p_J\in \sigma_I$. Recall that $T$-invariant curves are rational curves whose intersection with the fixed locus has cardinality $2$; these two fixed points will be denoted by $p_0$ and $p_\infty$.

\begin{lemma}
\label{lemfinTcurvessym}
There is only a finite number of $T$-invariant curves inside $\IGr(k,V)$. They are of two types:
\begin{enumerate}
\item[type $\alpha$:] curves with $p_0=p_I$ and $p_\infty=p_J$, where $\#(I\cap J)=k-1$;
\item[type $\beta$:] curves with $p_0=p_I$ and $p_\infty=p_J$, where $\#(I\cap J)=k-2$, $I- J=\{a_1,a_2\}$, $J-I=\{-a_2,-a_1\}$.
\end{enumerate}
Among these, the $T$-invariants curves which are also contained inside $\I2Gr(k,V)$ are those of type $\alpha$.
\end{lemma}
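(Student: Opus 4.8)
The plan is to classify $T$-invariant curves in the ambient symplectic Grassmannian $\IGr(k,V)$ first, then cut down to those lying in $\I2Gr(k,V)$. Recall that a $T$-invariant irreducible curve $C$ must contain at least two fixed points, and in fact exactly two, say $p_I=p_0$ and $p_J=p_\infty$; moreover $C$ carries a $T$-action with a dense orbit whose weight (as a character of $T$ acting on the tangent line $T_{C,p_I}$) is, up to sign, the weight of $T$ on $T_{C,p_\infty}$. The strategy is the standard one for GKM-type varieties: the $T$-invariant curves through a fixed point $p_I$ are in bijection with the $T$-weights on $T_{\IGr(k,V),p_I}$ that occur with multiplicity one on lines whose closure is $T$-stable. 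So first I would write down the tangent weights of $\IGr(k,V)$ at $p_I$ (analogous to Lemma \ref{lemweightsbisym}, but for the symplectic Grassmannian: these are $-2\epsilon_i$ for $i\in I$, and $\epsilon_i-\epsilon_j$ for $j\in I$, $i\notin I\cup(-I)$, together with $\epsilon_i+\epsilon_j$ and $-\epsilon_i-\epsilon_j$ type weights coming from the isotropy directions), and then match each such weight to an explicit $\PP^1$ joining $p_I$ to another fixed point.

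Concretely, for type $\alpha$ I would take $I$ and $J$ with $\#(I\cap J)=k-1$, so $I=I_0\cup\{a\}$, $J=I_0\cup\{b\}$ with $a\neq b$ and $\{a,b\}$ admissible together with $I_0$; the line is $\{[v_{I_0}\wedge(s v_a + t v_b)] : [s:t]\in\PP^1\}$, which lies in $\IGr(k,V)$ precisely when the plane $\langle v_a,v_b\rangle$ is $\omega_1$-isotropic, i.e. when $b\neq -a$ — but $b=-a$ is already excluded by admissibility of $J$, so all such lines are in $\IGr(k,V)$. For type $\beta$ I would take $I=I_0\cup\{a_1,a_2\}$ and $J=I_0\cup\{-a_1,-a_2\}$: here the relevant curve is a conic in the $\Sp$-Grassmannian, parametrizing the pencil of isotropic planes in the $\Sp(4)$-subspace $K_{|a_1|}\oplus K_{|a_2|}$ that contain no common fixed line; explicitly $\IGr(2,4)\cong$ a quadric threefold, and the $T$-invariant conic joining $p_{\{a_1,a_2\}}$ to $p_{\{-a_1,-a_2\}}$ inside it has weight $\epsilon_{a_1}+\epsilon_{a_2}$ at one end and $-\epsilon_{a_1}-\epsilon_{a_2}$ at the other. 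Checking that these two families exhaust all $T$-invariant curves is a weight-counting argument: every tangent weight at $p_I$ is of the form $-2\epsilon_i$ (giving a type-$\beta$ curve, after pairing) or $\epsilon_i-\epsilon_j$ (type $\alpha$) or $\pm(\epsilon_i+\epsilon_j)$ (type $\beta$), and one verifies there is a unique $T$-stable curve realizing each.

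For the second assertion — that among these only the type-$\alpha$ curves lie in $\I2Gr(k,V)$ — I would argue as follows. A type-$\alpha$ line $\ell=\{[v_{I_0}\wedge(sv_a+tv_b)]\}$ is isotropic for every form in the pencil $\langle\omega_1,\omega_2\rangle$ as soon as $v_{I_0}\wedge v_a$ and $v_{I_0}\wedge v_b$ are, since the subspaces vary linearly and the isotropy condition on a fixed $v_{I_0}$ plus one moving vector is linear; because $\langle v_a\rangle$, $\langle v_b\rangle$ and $v_{I_0}$ all sit in distinct blocks $K_i$ (or $a=-b$, excluded), the forms $\omega_1,\omega_2$ — being block-diagonal by Remark \ref{remdiagforms} — vanish identically on $v_{I_0}\wedge(sv_a+tv_b)$, so $\ell\subset\I2Gr(k,V)$. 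Conversely, a type-$\beta$ curve lives inside a single $\Sp(4)$-block $K_{|a_1|}\oplus K_{|a_2|}$, and there the two forms $\omega_1,\omega_2$ restrict to $x_{a_1}\wedge x_{-a_1}+x_{a_2}\wedge x_{-a_2}$ and $\lambda_{|a_1|}x_{a_1}\wedge x_{-a_1}+\lambda_{|a_2|}x_{a_2}\wedge x_{-a_2}$ with $\lambda_{|a_1|}\neq\lambda_{|a_2|}$; the locus in $\Gr(2,4)$ isotropic for both is then exactly the two fixed points $p_{\{a_1,a_2\}}$ and $p_{\{-a_1,-a_2\}}$ (this is the $n=2$, $k=2$ case, where $\I2Gr(2,4)\cong(\PP^1)^2$ — here a single block, so just points — cf. Example \ref{exkeqn}), so no type-$\beta$ curve can be contained in $\I2Gr(k,V)$.

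The main obstacle I anticipate is the exhaustiveness part of the first claim: one must be sure that no "exotic" $T$-invariant curve of higher degree escapes the type-$\alpha$/type-$\beta$ dichotomy. The clean way around this is to use that $\IGr(k,V)$ is homogeneous, so its $T$-invariant curves are well understood via the Bruhat/GKM picture (Remark \ref{remorbitBSchubert}): the $T$-invariant curves joining $p_I$ to its neighbours correspond to the roots of $\Sp(2n)$ appearing in the tangent space at $p_I$, which are exactly the short roots $\pm\epsilon_i\mp\epsilon_j$ (type $\alpha$) and the long/short root combinations $\pm 2\epsilon_i$, $\pm(\epsilon_i+\epsilon_j)$ (type $\beta$), and this is a finite list — hence finitely many curves, all accounted for. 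Transporting that structure to $\I2Gr(k,V)$ and discarding the forms that do not survive both $\omega_1$ and $\omega_2$ then completes the proof.
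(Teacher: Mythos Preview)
The paper does not actually prove this lemma in the text; it is stated with the proof deferred to \cite{PhDBen}, and the subsequent remark only records that type $\alpha$ curves are lines and type $\beta$ curves are conics. So there is no in-paper proof to compare against, but your GKM-style approach (classify tangent weights at each $p_I$, produce an explicit $T$-stable $\PP^1$ for each, then test which survive the second form) is the natural one and is consistent with what the paper's remark implies.

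That said, there are a few concrete slips in your execution. First, the weight $-2\epsilon_i$ does \emph{not} give a type~$\beta$ curve: the associated curve is $\{[v_{I_0}\wedge(sv_i+tv_{-i})]\}$, which joins $p_I$ to $p_{(I\setminus\{i\})\cup\{-i\}}$, and these differ in exactly one element, so it is type~$\alpha$. Second, your isotropy argument for type~$\alpha$ lines is misstated: the $k$-plane $\langle v_{I_0}, sv_a+tv_b\rangle$ is isotropic for any block-diagonal form regardless of whether $\langle v_a,v_b\rangle$ is, simply because $I_0$ never contains $\pm a$ or $\pm b$; in particular $b=-a$ is \emph{not} excluded by admissibility of $J$ (take $I_0\cup\{-a\}$), and those curves are still perfectly good type-$\alpha$ lines in $\I2Gr(k,V)$. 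Third, your parenthetical for the type-$\beta$ exclusion (``here a single block, so just points'') is garbled: $\I2Gr(2,K_{|a_1|}\oplus K_{|a_2|})\cong(\PP^1)^2$ is two-dimensional, not a finite set. The correct computation is the direct one you almost do: evaluating $\omega_2$ on the type-$\beta$ curve $\langle v_{a_1}+tv_{-a_2},\,v_{a_2}+tv_{-a_1}\rangle$ gives $t(\lambda_{|a_1|}-\lambda_{|a_2|})$, which vanishes only at $t=0,\infty$ since the $\lambda_i$ are distinct. With these corrections your argument goes through.
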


\begin{remark}
From the proof of the previous lemma in \cite{PhDBen} it is straightforward to see that the curves of type $\alpha$ are lines inside $\PP(\wedge^k V^*)$, while the curves of type $\beta$ are conics inside a $\PP^2\subset \PP(\wedge^k V^*)$.
\end{remark}

\begin{definition}
Let $I=\{a_k\geq \dots \geq a_1\}$ and $J=\{b_k\geq \dots \geq b_1\}$. If $a_i\geq b_i$ for $1\leq i\leq k$, then we will say that $I$ is greater or equal than $J$, and we will denote this by $I\geq J$.
\end{definition}

We will say that $C=C_1 \dots C_m$ is a chain of $T$-equivariant curves from $p_I$ to $p_J$ if $C_i(\infty)=C_{i+1}(0)$ for any $1\leq i\leq m$, and $C_1(0)=p_I$, $C_m(\infty)=p_J$.

\begin{lemma}[\cite{PhDBen}]
\label{leminclpoints}
For two admissible subsets $I$ and $J$, the fact that $I\geq J$ is equivalent to $p_J\in \sigma'_I\subset \IGr(k,V)$ and to the fact that there is a chain of $T$-invariant curves inside $\IGr(k,V)$ from $p_I$ to $p_J$.
\end{lemma}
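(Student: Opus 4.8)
The plan is to prove the three-way equivalence by establishing a cycle of implications: (a) $I \geq J$ implies there is a chain of $T$-invariant curves inside $\IGr(k,V)$ from $p_I$ to $p_J$; (b) the existence of such a chain implies $p_J \in \sigma'_I$; and (c) $p_J \in \sigma'_I$ implies $I \geq J$. The middle implication (b) is essentially formal given the geometry of the Bialynicki-Birula decomposition: each $T$-invariant curve $C$ has $C(0) = p_{I'}$ and $C(\infty) = p_{J'}$ with $C \setminus \{p_{J'}\}$ lying in the BB-cell of $p_{I'}$ (after choosing $\tau$ generic enough that the weights along $C$ have the right sign), so $p_{J'} \in \sigma'_{I'}$; then one concatenates along the chain, using that $\sigma'$ is closed and, via Remark~\ref{remorbitBSchubert}, that $p_{J'} \in \sigma'_{I'}$ forces $\sigma'_{J'} \subseteq \sigma'_{I'}$, so the inclusions compose transitively.

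For (c), I would use the Bruhat-order description from Remark~\ref{remorbitBSchubert}: since the BB-decomposition of $\IGr(k,V)$ coincides with the Bruhat decomposition under a Borel $B \subset \Sp(V)$, the condition $p_J \in \sigma'_I$ is the Bruhat order on the relevant set of cosets. One then matches this combinatorially with the partial order $I \geq J$ defined in the excerpt. Concretely, an admissible subset $I$ of $\{\pm 1, \dots, \pm n\}$ with the ordering convention $a_k \geq \dots \geq a_1$ encodes a coset in $\Sp(2n)/P_k$; the standard characterization of Bruhat order on maximal isotropic (here, $k$-dimensional isotropic) Schubert cells says $p_J \leq p_I$ iff the sorted sequences satisfy the entrywise inequality $a_i \geq b_i$. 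This is the step where I expect to have to be careful, since the sign conventions (the identification $i < 0 \Rightarrow \epsilon_i = -\epsilon_{-i}$, and the choice of $\tau$ in \eqref{eqdeftau} with weights $t^n, \dots, t, t^{-1}, \dots, t^{-n}$) determine which direction in the partial order corresponds to "accumulating towards $p_I$," and getting the orientation right is the main bookkeeping obstacle.

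For (a), the direct construction: given $I \geq J$, I would exhibit an explicit chain of type-$\alpha$ curves. Type-$\alpha$ curves connect $p_{I'}$ to $p_{J'}$ when $\#(I' \cap J') = k-1$, i.e. they change exactly one index. Starting from $I = \{a_k \geq \dots \geq a_1\}$ and $J = \{b_k \geq \dots \geq b_1\}$ with $a_i \geq b_i$, I would decrease the indices of $I$ one at a time down to those of $J$, at each step replacing a single entry $a$ by a smaller value $b$ (this uses that for $a > b$ with $\{a,b\}$ not violating admissibility there is a $T$-invariant line in $\IGr(k,V)$ joining the corresponding fixed points — this is exactly a type-$\alpha$ curve in the $\IGr$ setting, coming from an $\mathfrak{sl}_2$ or a root subgroup of $\Sp(V)$ acting on the two relevant coordinates). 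The only subtlety is ordering the moves so that every intermediate subset stays admissible and stays $\geq J$; a greedy scheme (fix the largest discrepancy first, or process entries from the top) should work, and one checks admissibility is preserved because we only ever move an entry $a_i$ to a value still $\geq b_i \geq$ (and with opposite sign issues handled by) the neighbouring entries. Combining (a), (b), (c) closes the cycle and proves the lemma.
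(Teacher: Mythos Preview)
The paper does not actually supply a proof of this lemma: it is quoted from \cite{PhDBen} and no argument is reproduced here. So there is no in-paper proof to compare against, and your cycle (a)$\Rightarrow$(b)$\Rightarrow$(c)$\Rightarrow$(a) is a reasonable reconstruction of the standard argument for a cominuscule-type Bruhat order.

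That said, your step (a) has a genuine gap. You propose to connect $p_I$ to $p_J$ by a descending chain of type-$\alpha$ curves only, decreasing one index at a time while preserving admissibility. This is not always possible: type-$\beta$ curves are sometimes required. For a concrete obstruction, take $k=2$ and $I=(2,-1)$, $J=(1,-2)$. Both are admissible and $I\geq J$. Any single type-$\alpha$ descent from $(2,-1)$ must replace either the entry $2$ or the entry $-1$ by a strictly smaller index while keeping the result admissible; but $2\mapsto 1$ gives $(1,-1)$ (inadmissible), $-1\mapsto -2$ gives $(2,-2)$ (inadmissible), and any other choice (e.g.\ $2\mapsto -2$ or $-1\mapsto -3$) lands at a subset that is no longer $\geq (1,-2)$, so the chain can never terminate at $J$. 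The unique $T$-invariant curve joining $p_I$ and $p_J$ here is the type-$\beta$ conic described in Lemma~\ref{lemfinTcurvessym} (with $I\setminus J=\{2,-1\}$ and $J\setminus I=\{1,-2\}=\{-(-1),-2\}$), corresponding to the root $\epsilon_2+\epsilon_{-1}^{-1}$ side of the $C_n$ root system rather than an $A$-type simple move.

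The fix is straightforward but you should state it: in (a), allow both type-$\alpha$ and type-$\beta$ curves, i.e.\ realise the covering relations of the order $\geq$ by reflections in \emph{all} positive roots of $\Sp(V)$, not only those of $A$-type. With that amendment your (b) and (c) go through as written, since both families of curves are still $T$-invariant in $\IGr(k,V)$ and both are compatible with the $\tau$-orientation you use.
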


\section{Equivariant cohomology of bisymplectic Grassmannians of planes}

\label{seceqcohomsymeasy}

In this section we study the $T$-equivariant cohomology of $\I2Gr(2,V)$. We begin by recalling some basic facts about equivariant cohomology. A reference for this subject is \cite{Brioneqintro}; the general results we will cite can be found in \cite{GKM} or \cite{Brioneqtorus}. Then we will analyze the case of the symplectic Grassmannian, in order to compare it with the behaviour of the bisymplectic one. The main result of this section will be a Chevalley formula for Schubert classes in $\I2Gr(2,V)$, which a priori determines inductively all the equivariant classes $\overline{\sigma_I}$ for $I$ admissible.

Let $X$ be a smooth variety on which a torus $T\cong (\CC^*)^n$ acts with finitely many fixed points $X^T=\{p_1,\dots,p_r\}$. Denote by $\Xi(T)\cong \ZZ^n$ the character group of $T$. Moreover, let $\tau\in T$ be a general $1$-dimensional torus such that its fixed locus is equal to $X^T$; then the Bialynicki-Birula decomposition for $\tau$ provides varieties $\sigma_{p_i}$ for all $1\leq i\leq r$ which are a basis for the ordinary cohomology $\HHH^*(X,\ZZ)$. 

The equivariant cohomology ring $\HHH^*_T(X)$ is an algebra over the polynomial ring $\HHH^*_T(\pt)\cong \CC[\Xi(T)]=\Sym((\Xi(T))\otimes_\ZZ \CC)$ via the push-forward map of the natural inclusion of a point $\pt$ inside $X$. An additive basis for this algebra is given by the (equivariant) classes $\overline{\sigma_{p_i}}$ for $1\leq i\leq r$.

Denote by $\HHH^*(X):=\HHH^*(X,\CC)$. The pullback map $i^*:\HHH^*_T(X)\to \HHH^*_T(X^T)$ of the natural inclusion $i:X^T\to X$ is injective; therefore 
\[
\HHH^*_T(X)= \Xi(T)\otimes_{\ZZ} \HHH^*(X)\cong \Xi(T)\otimes_{\ZZ} \bigoplus_{p_i}\CC \overline{\sigma_{p_i}}
\]
can be seen as a subring of
\[
\HHH^*_T(X^T)\cong \Xi(T)\otimes_{\ZZ} \HHH^*(X^T)\cong \Xi(T)\otimes_{\ZZ} \bigoplus_{p_i}\CC p_i\cong \CC[\Xi(T)]^{\oplus r}.
\]
Via this inclusion, we will denote by $f_{\sigma_i}\in \CC[\Xi(T)]^{\oplus r}$ the pullback of the class $\overline{\sigma_{p_i}}\in \HHH^*_T(X)$, and by $f_{\sigma_i}(p_j)=(i\circ i_j)^*\overline{\sigma_{i}}$, where $i_j:p_j\to X^T$ is the natural inclusion. Clearly, if $\epsilon_1,\dots,\epsilon_n$ is a $\ZZ$-basis of $\Xi(T)$, then $f_{\sigma_i}(p_j)\in \HHH^*_T(p_j)$ is a polynomial in $\epsilon_1,\dots,\epsilon_n$. Therefore, in order to understand the equivariant cohomology of $X$, we need to find the polynomials $f_{\sigma_i}(p_j)$. The following results hold:

\begin{theorem}
\label{thmlocgen}
The polynomials $f_{\sigma_i}(p_j)$ satisfy the following properties:
\begin{enumerate}
\item $f_{\sigma_i}(p_j)$ is a homogeneous polynomial of degree $\codim(\sigma_i)$;
\item $f_{\sigma_i}(p_j)=0$ if $p_j\notin \sigma_i$;
\item $f_{\sigma_i}(p_j)$ is the product of the $T$-characters of the normal bundle $N_{\sigma_i /X,p_j}$ whenever $\sigma_i$ is smooth at $p_j$;
\item If there exists a $T$-equivariant curve between $p_j$ and $p_k$ whose character is $\chi$, then $\chi$ divides $f_{\sigma_i}(p_j)-f_{\sigma_i}(p_k)$ for $1\leq i\leq r$.
\end{enumerate}
\end{theorem}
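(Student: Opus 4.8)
\textbf{Proof proposal for Theorem \ref{thmlocgen}.}

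The plan is to establish the four properties as standard consequences of localization in equivariant cohomology, treating them essentially independently. For (1): the class $\overline{\sigma_i} \in \HHH^{2\codim(\sigma_i)}_T(X)$ lives in a fixed cohomological degree; since $\HHH^*_T(p_j) \cong \CC[\Xi(T)]$ is generated in degree $2$ (the characters $\epsilon_1,\dots,\epsilon_n$ having degree $2$ in the topological grading), the restriction $f_{\sigma_i}(p_j)$ is forced to be homogeneous of polynomial degree $\codim(\sigma_i)$. For (2): this is the support property for equivariant classes — since $\sigma_i$ is a $T$-stable subvariety, its fundamental class is pushed forward from $\HHH^*_T(\sigma_i)$, and if $p_j \notin \sigma_i$ then the composite $p_j \hookrightarrow X$ factors through the open complement $X \setminus \sigma_i$, on which the pullback of $\overline{\sigma_i}$ vanishes; hence $f_{\sigma_i}(p_j) = 0$. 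First I would state these two points and give the one-line justifications above.

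For (3): this is the self-intersection / excess formula. When $\sigma_i$ is smooth at $p_j$, a $T$-stable neighbourhood of $p_j$ in $\sigma_i$ is a smooth $T$-stable subvariety of a smooth $T$-stable neighbourhood of $p_j$ in $X$; restricting $\overline{\sigma_i}$ to $p_j$ then computes the equivariant Euler class of the normal bundle $N_{\sigma_i/X, p_j}$, which by definition is the product of its $T$-characters. I would invoke the standard reference (\cite{Brioneqtorus}) here rather than reproving the self-intersection formula. For (4): this is the GKM edge relation. Given a $T$-invariant curve $C \cong \PP^1$ joining $p_j = C(0)$ and $p_k = C(\infty)$ on which $T$ acts through the character $\chi$ (more precisely, the tangent character at $p_j$ is $\chi$ and at $p_k$ is $-\chi$), restriction of $\HHH^*_T(X)$ to $\HHH^*_T(C)$ followed by the known description of $\HHH^*_T(\PP^1)$ as $\{(a,b) \in \CC[\Xi(T)]^{\oplus 2} : a \equiv b \pmod \chi\}$ forces $f_{\sigma_i}(p_j) \equiv f_{\sigma_i}(p_k) \pmod \chi$; again I would cite \cite{GKM} or \cite{Brioneqtorus} for the computation of $\HHH^*_T(\PP^1)$.

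I do not expect a genuine obstacle here: all four statements are the defining structural features of torus-equivariant cohomology of a space with isolated fixed points and are exactly the hypotheses of the GKM package, so the "proof" is really a matter of citing \cite{GKM} and \cite{Brioneqtorus} and indicating which standard fact yields which item, together with the remark that for $X = \I2Gr(2,V)$ all the geometric input (smoothness of $X$, finiteness of $X^T$, existence of the Bialynicki--Birula basis, finiteness of $T$-invariant curves) has been verified in the previous section. If anything requires care, it is the compatibility of the topological grading on $\HHH^*_T(\pt)$ with the polynomial grading on $\CC[\Xi(T)]$ used in (1), and the sign/character bookkeeping in (4) (that the curve contributes a single well-defined character up to sign); I would dispatch both with a sentence fixing conventions.
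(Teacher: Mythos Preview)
Your proposal is correct, and indeed more detailed than the paper itself: the paper does not prove Theorem \ref{thmlocgen} at all but simply states it as a known result, with the surrounding text indicating that ``the general results we will cite can be found in \cite{GKM} or \cite{Brioneqtorus}''. Your sketch of each item is the standard justification and is entirely compatible with those references, so there is nothing to compare beyond noting that you have spelled out what the paper leaves as a citation.
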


\begin{theorem}
\label{thmfinitecurvesgen}
If there is only a finite number of $T$-invariant curves inside $X$, then the equivariant cohomology $\HHH^*_T(X)$ is the subalgebra of $\CC[\Xi(T)]^{\oplus r}$ consisting of elements $f=(f_1,\dots,f_r)$ satisfying the last condition in Theorem \ref{thmlocgen}, i.e.:
\begin{equation}
\label{reldeteqcohom}
\begin{array}{c}\mbox{if there exists a $T$-equivariant curve between $p_j$ and $p_k$}\\
\mbox{whose character is $\chi$, then $\chi$ divides $f_j-f_k$.} 
\end{array}
\end{equation}
\end{theorem}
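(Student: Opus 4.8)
The plan is to read this statement as the Goresky--Kottwitz--MacPherson localization theorem \cite{GKM} and to reconstruct its proof from the two facts already recorded in the excerpt: the restriction $i^*\colon \HHH^*_T(X) \hookrightarrow \HHH^*_T(X^T) = \CC[\Xi(T)]^{\oplus r}$ is injective, and every class $\overline{\sigma_{p_i}}$ restricts compatibly along $T$-invariant curves (Theorem \ref{thmlocgen}(4)). One inclusion is then immediate: part (4) says precisely that $i^*(\HHH^*_T(X))$ is contained in the subalgebra $\Lambda\subseteq\CC[\Xi(T)]^{\oplus r}$ cut out by the divisibility conditions \eqref{reldeteqcohom}. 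So the whole content of the statement is the reverse inclusion $\Lambda\subseteq i^*(\HHH^*_T(X))$.

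For that surjectivity I would invoke the Chang--Skjelbred lemma, which applies because $X$ is \emph{equivariantly formal}: the decomposition into affine Schubert cells forces $\HHH^{\mathrm{odd}}(X)=0$ and makes $\HHH^*_T(X)$ a free module over $R:=\HHH^*_T(\pt)=\CC[\Xi(T)]$ with homogeneous basis $\{\overline{\sigma_{p_i}}\}$, $\deg\overline{\sigma_{p_i}}=2\codim(\sigma_{p_i})$. The lemma then identifies, inside $R^{\oplus r}$,
\[
i^*(\HHH^*_T(X)) \;=\; \bigcap_{H} i^*_H\bigl(\HHH^*_T(X^H)\bigr),
\]
the intersection running over the codimension-one subtori $H\subset T$. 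This is where finiteness of the set of $T$-invariant curves enters: for $H=\ker(\chi_0)$ with $\chi_0$ primitive, the only positive-dimensional components of $X^H$ are those $T$-invariant curves whose weight lies in $\ZZ\chi_0$, each a copy of $\PP^1$ on which $T$ acts through that weight; since $\HHH^*_T(\PP^1)$ for a curve $C$ of weight $\chi_C$ is $\{(a,b)\in R^2:\chi_C\mid a-b\}$, the $H$-term above is exactly the set of tuples $(f_p)_p\in R^{\oplus r}$ with $\chi_C\mid f_p-f_q$ for every $T$-invariant curve $C$ joining $p$ to $q$ with $\chi_C\in\ZZ\chi_0$. Intersecting over all $H$ reproduces condition \eqref{reldeteqcohom}, that is $\Lambda$, and we are done.

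If one prefers to avoid citing Chang--Skjelbred, the same conclusion is reachable by hand. Order the fixed points so that $p_j\in\sigma_{p_i}\Rightarrow j\le i$ and form the filtration by closed $T$-stable subsets $Z_m=\bigcup_{j\le m}(\text{cell of }p_j)$, whose pair sequences $\HHH^*_T(Z_m,Z_{m-1})\to\HHH^*_T(Z_m)\to\HHH^*_T(Z_{m-1})$ split by vanishing of odd cohomology (re-proving $R$-freeness of $\HHH^*_T(X)$). Restrict to the $1$-skeleton $X_1:=X^T\cup\bigcup_{\text{curves}}C$: the restriction is injective, $\HHH^*_T(X_1)\cong\Lambda$ by a Mayer--Vietoris computation over the finite graph of fixed points and invariant curves, and an induction along $Z_\bullet$ (using Theorem \ref{thmlocgen}(2)--(3)) shows that $\overline{\sigma_{p_m}}|_{X_1}$ is a homogeneous element of $\Lambda$ of degree $2\codim(\sigma_{p_m})$ which vanishes at every $p_j\in\sigma_{p_m}$ with $j<m$ and whose value at $p_m$ is the equivariant Euler class of $N_{\sigma_{p_m}/X,p_m}$, i.e. the product of the weights of the invariant curves joining $p_m$ to smaller fixed points. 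Comparing Poincar\'e series, this triangular family generates $\Lambda$ over $R$. Either way, the main obstacle is exactly this surjectivity -- that $\{\overline{\sigma_{p_m}}\}$ exhausts $\Lambda$ rather than spanning a proper free submodule of the same rank -- which is the step where the specific geometry is decisive: equivariant formality of $X$ together with the genericity of $\tau$ (Lemma \ref{lemfixtau}), guaranteeing that the invariant curves through $p_m$ realize exactly the $\codim(\sigma_{p_m})$ normal directions of $\sigma_{p_m}$ at $p_m$. The remainder is bookkeeping with gradings and long exact sequences.
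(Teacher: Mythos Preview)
The paper does not prove this theorem: it is quoted from the literature (the references \cite{GKM} and \cite{Brioneqtorus} are given just before Theorem~\ref{thmlocgen}), so there is no ``paper's own proof'' to compare against. Your proposal is essentially a reconstruction of the standard GKM argument, and the main line --- equivariant formality from the Bialynicki--Birula paving, then Chang--Skjelbred to identify the image of $i^*$ with the intersection over codimension-one subtori, then the observation that for each such $H$ the fixed locus $X^H$ is a disjoint union of points and $T$-invariant $\PP^1$'s --- is correct and is exactly how one proves it in \cite{GKM} and \cite{Brioneqtorus}.

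Your ``by hand'' alternative, however, has a couple of slips worth fixing. With the ordering you chose ($p_j\in\sigma_{p_i}\Rightarrow j\le i$), the class $\overline{\sigma_{p_m}}$ vanishes at the fixed points \emph{outside} $\sigma_{p_m}$, in particular at all $p_j$ with $j>m$; it does not vanish at the $p_j\in\sigma_{p_m}$ with $j<m$ as you wrote. Correspondingly, the normal weights at $p_m$ are the weights of the invariant curves linking $p_m$ to points with \emph{larger} index, not smaller. More importantly, ``comparing Poincar\'e series'' is not the right justification for surjectivity onto $\Lambda$: the real point is that if $f\in\Lambda$ already vanishes at every $p_j$ with $j>m$, then the GKM divisibility conditions force $f(p_m)$ to be divisible by each weight of a curve from $p_m$ to such a $p_j$, hence by their product $f_{\sigma_{p_m}}(p_m)$; this is what lets the inductive subtraction go through. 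That step uses the finiteness hypothesis (pairwise coprime weights at each fixed point) in an essential way, not just a dimension count.
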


Moreover, from the equivariant cohomology, it is possible to recover the ordinary cohomology $\HHH^*(X)$:

\begin{theorem}
\label{thmeqclasscohomgen}
The classical cohomology $\HHH^*(X)$ can be recovered from the equivariant cohomology $\HHH^*_T(X)$ as
\[
\HHH^*(X)\cong \HHH^*_T(X)/(\epsilon_1,\cdots,\epsilon_n).
\]
\end{theorem}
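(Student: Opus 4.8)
The plan is to deduce the statement from the fact, already recalled above, that the equivariant Schubert classes $\overline{\sigma_{p_i}}$, $1\le i\le r$, form a basis of $\HHH^*_T(X)$ as a free module over $\HHH^*_T(\pt)\cong\CC[\epsilon_1,\dots,\epsilon_n]$. The only point really needing proof is that the forgetful (``non-equivariant'') ring homomorphism $\rho\colon\HHH^*_T(X)\to\HHH^*(X)$ induces an isomorphism after killing the variables $\epsilon_1,\dots,\epsilon_n$.

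First I would recall why $\HHH^*_T(X)$ is a free $\HHH^*_T(\pt)$-module. In the Borel construction one has a fibration $X\hookrightarrow X_T\xrightarrow{\pi}BT$ whose Serre spectral sequence has $E_2^{p,q}=\HHH^p(BT)\otimes\HHH^q(X)$ (trivial coefficients, since $BT$ is simply connected). Because the Bialynicki--Birula decomposition writes $X$ as a union of affine cells one has $\HHH^{\mathrm{odd}}(X)=0$; together with $\HHH^{\mathrm{odd}}(BT)=0$ this forces $E_2^{p,q}=0$ unless $p$ and $q$ are both even, so every differential $d_r$ (which changes total degree by $1$) lands in a zero group and the spectral sequence degenerates at $E_2$. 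By the Leray--Hirsch theorem the classes $\overline{\sigma_{p_i}}$, which restrict on a fibre to the ordinary Schubert classes $\sigma_{p_i}$ (a basis of $\HHH^*(X)$), give a free $\HHH^*(BT)$-basis of $\HHH^*_T(X)$, and under this identification $\rho$ becomes the natural map $\HHH^*_T(X)\otimes_{\HHH^*_T(\pt)}\CC\to\HHH^*(X)$.

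Next, observe that $\rho$ is a ring homomorphism with $\rho(\epsilon_k)=0$ for all $k$: indeed $\epsilon_k\in\HHH^2_T(\pt)=\HHH^2(BT)$ pulls back to $\HHH^2_T(X)$ and restricts to $0$ on a fibre of $\pi$. Hence $\rho$ factors through a homomorphism $\bar\rho\colon\HHH^*_T(X)/(\epsilon_1,\dots,\epsilon_n)\to\HHH^*(X)$. On one hand, $\bar\rho$ is surjective because its image contains all the $\sigma_{p_i}=\rho(\overline{\sigma_{p_i}})$ (the forgetful map sends the equivariant class of a $T$-invariant subvariety to its ordinary class), and these span $\HHH^*(X)$. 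On the other hand, reducing $\HHH^*_T(X)=\bigoplus_i\HHH^*_T(\pt)\,\overline{\sigma_{p_i}}$ modulo the maximal ideal $\mathfrak m=(\epsilon_1,\dots,\epsilon_n)$ gives $\HHH^*_T(X)/\mathfrak m\,\HHH^*_T(X)=\bigoplus_i\CC\,\overline{\sigma_{p_i}}$, of dimension $r=\#X^T=\dim_\CC\HHH^*(X)$. So $\bar\rho$ is a surjection of $\CC$-vector spaces of equal finite dimension, hence an isomorphism of graded $\CC$-algebras, as claimed.

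The main (and essentially only) substantive input is the degeneration of the spectral sequence — equivalently, the freeness of $\HHH^*_T(X)$ over $\HHH^*_T(\pt)$ — which rests on the vanishing of $\HHH^{\mathrm{odd}}(X)$ coming from the cell decomposition; everything else is formal bookkeeping with the forgetful map and a dimension count. Since this freeness has already been used above and is a general fact (see \cite{GKM}, \cite{Brioneqtorus}), there is in fact no genuine obstacle here, and one could alternatively just cite these references.
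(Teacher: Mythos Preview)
Your argument is correct and is the standard one: degeneration of the Serre spectral sequence for $X\hookrightarrow X_T\to BT$ (forced by $\HHH^{\mathrm{odd}}(X)=0$ from the Bialynicki--Birula decomposition) gives $\HHH^*_T(X)$ free over $\HHH^*_T(\pt)$, and then the forgetful map induces the claimed isomorphism by a dimension count.

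The paper, however, does not prove this statement at all. It is listed among the ``basic facts about equivariant cohomology'' recalled at the start of Section~\ref{seceqcohomsymeasy}, with the blanket attribution that ``the general results we will cite can be found in \cite{GKM} or \cite{Brioneqtorus}.'' So there is no proof in the paper to compare with; your proposal simply supplies the standard justification behind the cited fact, and you yourself note in the last paragraph that one could equally well just invoke those references.
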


Therefore, the finiteness of the number of $T$-invariant curves inside $\I2Gr(k,V)$ (Lemma \ref{lemfinTcurvessym}) and Theorem \ref{thmfinitecurvesgen} give:

\begin{theorem}
\label{thm1symgrass}
The relations in \eqref{reldeteqcohom} are enough to determine the equivariant cohomology of $\I2Gr(k,V)$.
\end{theorem}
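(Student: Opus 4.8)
The plan is to obtain this as a direct application of Theorem~\ref{thmfinitecurvesgen}, so the real task is to check that its hypotheses hold for $X=\I2Gr(k,V)$. First I would recall the standing structural facts already in place. Under our genericity assumption on the defining pencil $\Omega=\langle\omega_1,\omega_2\rangle$, the zero locus $\zero(\Omega)=\I2Gr(k,V)$ has the expected dimension and is smooth (Bertini, cf.\ Proposition~\ref{propsmooth}); by Proposition~\ref{propsmooth} and Remark~\ref{remdiagforms} the forms are simultaneously block diagonalizable, so the diagonal torus $T\cong(\CC^*)^n$ of Section~\ref{secBBdecomposition} acts on $X$. By Proposition~\ref{propfinpoints} this action has exactly $2^k\binom{n}{k}$ fixed points $p_I$, indexed by admissible subsets $I$. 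Choosing the one–parameter subgroup $\tau$ of \eqref{eqdeftau}, Lemma~\ref{lemfixtau} ensures that $\tau$ too has finite fixed locus, so the Bialynicki–Birula decomposition applies and, as recorded in Section~\ref{secBBdecomposition}, exhibits $\I2Gr(k,V)$ as a union of Schubert cells isomorphic to affine spaces, with closures $\sigma_I$.

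Next I would note that this cell decomposition puts us exactly in the situation for which Theorem~\ref{thmfinitecurvesgen} is formulated: the cohomology $\HHH^*(\I2Gr(k,V))$ is concentrated in even degrees, the classes $\overline{\sigma_I}$ give an $\HHH^*_T(\pt)$-basis of $\HHH^*_T(\I2Gr(k,V))$, and the localization map $i^*\colon \HHH^*_T(X)\to \HHH^*_T(X^T)\cong \CC[\Xi(T)]^{\oplus r}$ is injective. In other words $X$ is equivariantly formal and the GKM-type description is available in principle.

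The remaining hypothesis is the finiteness of the set of $T$-invariant curves in $X$, and this is precisely Lemma~\ref{lemfinTcurvessym}: inside $\IGr(k,V)$ there are only finitely many $T$-invariant curves, of types $\alpha$ and $\beta$, and those contained in $\I2Gr(k,V)$ are exactly the type-$\alpha$ ones — still a finite set. All hypotheses of Theorem~\ref{thmfinitecurvesgen} being verified, that theorem identifies $\HHH^*_T(\I2Gr(k,V))$ with the subalgebra of $\CC[\Xi(T)]^{\oplus r}$ cut out by the divisibility conditions \eqref{reldeteqcohom} associated to these finitely many ($T$-equivariant, type-$\alpha$) curves, which is exactly the claim.

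Since the statement is thus a formal corollary, there is no genuine obstacle internal to its proof; the substance lies entirely in the inputs, above all in Lemma~\ref{lemfinTcurvessym} (the classification and finiteness of $T$-invariant curves, established in \cite{PhDBen}) and in the GKM-type Theorem~\ref{thmfinitecurvesgen}. If anything deserves care, it is only to keep the genericity hypotheses on the pencil in force, so that $X$ is honestly smooth and the localization/GKM machinery genuinely applies — but this is part of our standing assumptions. The content-bearing work is postponed to the next sections, where the divisibility relations \eqref{reldeteqcohom} for $\I2Gr(2,V)$ are turned into the unicity statement for Schubert classes (Theorem~\ref{thmuniShckeqtwo}) and the Chevalley formula (Theorem~\ref{thmChevform}).
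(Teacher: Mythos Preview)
Your proposal is correct and follows exactly the paper's approach: the theorem is stated as an immediate consequence of Lemma~\ref{lemfinTcurvessym} (finiteness of $T$-invariant curves in $\I2Gr(k,V)$) and the GKM-type Theorem~\ref{thmfinitecurvesgen}, with the standing smoothness and finite-fixed-point hypotheses already in place. The additional context you provide (equivariant formality via the Bialynicki--Birula decomposition, the caveat about genericity of the pencil) is all accurate and simply spells out what the paper leaves implicit.
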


One should be careful: being able to determine the equivariant cohomology of $\I2Gr(k,V)$ does not imply that we are able to identify the equivariant classes $\overline{\sigma_{I}}$ in general. 

\subsection{Warm-up: the homogeneous case}

In contrast to the bisymplectic case, in the homogeneous case the following proposition ensures that we can identify the equivariant classes $f_{\sigma'_I}$:

\begin{proposition}[\cite{PhDBen}]
\label{prophomvareqcohom}
Let $X=G/P$ be a homogeneous rational variety under the action of a simple group $G$. Then the maximal torus $T$ inside a Borel subgroup $B\subset G$ acts with a finite number of fixed points on $X$. Moreover, if there is only a finite number of $T$ equivariant curves, then the equivariant classes of Schubert varieties inside $\HHH_T^*(X)$ are determined by the relations $1,2,3$ in Theorem \ref{thmlocgen}.
\end{proposition}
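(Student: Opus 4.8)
The assertion is a uniqueness statement: one must show that $\overline{\sigma_i}$ is the \emph{only} class in $\HHH^*_T(X)$ whose localizations $\bigl(f_{\sigma_i}(p_1),\dots,f_{\sigma_i}(p_r)\bigr)$ satisfy properties $1$--$3$ of Theorem \ref{thmlocgen} attached to the subvariety $\sigma_i$; since $\overline{\sigma_i}$ itself satisfies all of $1$--$4$, this is exactly what is needed, and the plan is to adapt the classical Kostant--Kumar/Billey argument. I would first record the homogeneous input. By Remark \ref{remorbitBSchubert} the Bialynicki--Birula and Bruhat stratifications of $X=G/P$ coincide, so $\sigma_i=\overline{B.p_i}=\bigsqcup_{p_j\preceq p_i}C_j$ with $C_j$ the Schubert cell, $\dim C_j=\dim\sigma_j$, the $T$-fixed points of $\sigma_i$ being exactly the $p_j$ with $p_j\preceq p_i$; moreover $C_i$ is the unique open stratum of $\sigma_i$ and $p_i$ is its only $T$-fixed point, so $\sigma_i$ is smooth at $p_i$. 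Second, I would record the local picture at a fixed point $p_j$: the weights of $T$ on $T_{X,p_j}$ are $\dim X$ distinct roots, no two of which are negatives of one another, hence pairwise non-proportional as linear forms on $\Xi(T)\otimes\CC$; the $T$-invariant curves through $p_j$ are in bijection with these weights, and such a curve joins $p_j$ to a fixed point $p_k$ with $p_k\succ p_j$ precisely when its character is $\tau$-positive; finally, since $T_{\sigma_j,p_j}$ is the $\tau$-negative part of $T_{X,p_j}$, the number of $\tau$-positive weights, hence of these curves, is $\codim\sigma_j$.

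\textbf{The induction.} Suppose $g\in\HHH^*_T(X)$ also satisfies $1$--$3$ for $\sigma_i$, and set $h:=\overline{\sigma_i}-g$, viewed as an $r$-tuple of polynomials via Theorem \ref{thmfinitecurvesgen}. Then $h$ is homogeneous of degree $d:=\codim\sigma_i$; $h(p_j)=0$ whenever $p_j\notin\sigma_i$ (property $2$); $h(p_i)=0$, because by property $3$ both $f_{\sigma_i}(p_i)$ and $g(p_i)$ equal the product of the $T$-characters of $N_{\sigma_i/X,p_i}$ ($\sigma_i$ being smooth at $p_i$); and $h$ obeys \eqref{reldeteqcohom}. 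I claim $h=0$, for which it suffices to show $h(p_j)=0$ for all $p_j\preceq p_i$, and this I would prove by induction on $\delta_j:=\dim C_i-\dim C_j\ge 0$. If $\delta_j=0$ then $p_j=p_i$ and $h(p_i)=0$. If $\delta_j>0$, i.e.\ $p_j\prec p_i$, take any $T$-invariant curve joining $p_j$ to a fixed point $p_k$ with $p_k\succ p_j$: either $p_k\notin\sigma_i$, whence $h(p_k)=0$ by property $2$; or $p_j\prec p_k\preceq p_i$, whence $\dim C_k>\dim C_j$, so $\delta_k<\delta_j$ and $h(p_k)=0$ by the inductive hypothesis. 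In either case the character $\chi_{jk}$ of the curve divides $h(p_j)-h(p_k)=h(p_j)$. There are $\codim\sigma_j$ such curves, and $p_j\prec p_i$ forces $\sigma_j\subsetneq\sigma_i$, so $\codim\sigma_j\ge d+1$; their characters being pairwise non-proportional linear forms, their product --- a polynomial of degree $\ge d+1$ --- divides $h(p_j)$, which is impossible unless $h(p_j)=0$ because $\deg h(p_j)=d$. This closes the induction, so $h=0$ and $g=\overline{\sigma_i}$.

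\textbf{Main obstacle.} The only place genuinely using homogeneity --- and the expected crux --- is the second half of the first step, the dictionary between $T$-invariant curves through $p_j$ and weights of $T_{X,p_j}$: identifying them, matching $\tau$-positivity with moving up in the order, counting them as $\codim\sigma_j$, and, most importantly, the pairwise non-proportionality of their characters. All of this rests on the root-space description $T_{X,p_j}\cong\bigoplus_{\alpha}\fg_{\alpha}$ with $\alpha$ ranging over a set of distinct roots containing no pair $\{\beta,-\beta\}$, which is where the hypothesis that $G$ is (semi)simple enters and where the argument departs from the bisymplectic case: there, as noted in Remark \ref{remorbitBSchubert}, $\sigma_i$ need not contain every fixed point below $p_i$, the equalities $\dim C_j=\dim\sigma_j$ and the clean curve count may fail, and the degree/divisibility bookkeeping no longer forces $h=0$. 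Once the local picture is established, the remaining steps --- property $3$ at $p_i$, the degree count, and the inductive divisibility --- are routine.
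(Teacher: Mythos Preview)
Your argument is correct and is essentially the same as the paper's: the paper does not spell out the proof here (it is cited from \cite{PhDBen}), but the proof of Theorem~\ref{thmuniShckeqtwo} is explicitly described as an adaptation of it, and its ``generic'' part is exactly your minimality/induction-plus-divisibility argument, using the homogeneous input of Remark~\ref{remorbitBSchubert} (namely $p_j\in\sigma_i\Rightarrow\sigma_j\subset\sigma_i$, hence $\codim\sigma_j>\codim\sigma_i$) together with the identification of the $\tau$-positive tangent weights at $p_j$ with the characters of the $T$-curves going up from $p_j$. Your discussion of the ``main obstacle'' (the root-space description giving pairwise non-proportional characters and the curve count $\codim\sigma_j$) is precisely the content the paper invokes when it says that $(f_I-g)(p_h)$ must be divisible by $f_h(p_h)$.
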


The crucial property in order to prove this proposition is the one underlined by Remark \ref{remorbitBSchubert}. Later on we will prove the analogous result for $\I2Gr(2,V)$ by adapting the proof of the previous proposition.

We recall in the following that an equivariant Chevalley formula is known for $\IGr(k,V)$. This formula permits to compute inductively the polynomials $f_{\sigma'_I}(p_J)$. The inductive method proceeds as follows. Let us fix a Schubert variety $\sigma'_I$. Then 
\[
\mbox{if }p_J\notin \sigma'_I, \mbox{ then }f_{\sigma'_I}(p_J)=0.
\]
Moreover, $f_{\sigma'_I}(p_I)$ is just the product of the (positive) $\tau$-weights of $T_I$ (because $\sigma'_I$ is smooth at $p_I$). Notice that these two assertions are general, and will hold for the bisymplectic Grassmannian as well. 

Finally, the polynomial $f_{\sigma'_I}(\cdot)(f_{\sigma'_H}(\cdot)-f_{\sigma'_H}(p_H))$ has support over the points $p_J\in \sigma'_I$, $J\neq I$ (recall that $\sigma'_H$ is the codimension $1$ Schubert variety). By applying Lemma \ref{leminclpoints}, we obtain:
\begin{equation}
\label{eqindsym}
f_{\sigma'_I}(\cdot)(f_{\sigma'_H}(\cdot)-f_{\sigma'_H}(p_I))=\sum_{J\in I_{-1}}a_{I,J} f_{\sigma'_J}(\cdot),
\end{equation}
where $I_{-1}=\{J\mbox{ s.t. }I\geq J,\mbox{ and }\codim(\sigma'_J)=\codim(\sigma'_I)+1\}$ (notice that this is an equivariant Chevalley formula). The condition on the codimension is a consequence of the fact that $\deg(f_{\sigma'_J})=\codim(\sigma'_J)$. The coefficient $a_{I,J}$ turns out to be equal to $1$ if there is a $\alpha$-curve between $p_I$ and $p_J$, and it is equal to $2$ if there is a $\beta$-curve between $p_I$ and $p_J$. Knowing the coefficients $a_{I,J}$, one can determine inductively $f_{\sigma'_I}$ from the $f_{\sigma'_J}$'s in Equation \eqref{eqindsym}.

\subsection{Schubert classes are determined}

In this section we prove that the equivariant Schubert classes for $\I2Gr(2,V)$ are completely determined by the relations $1,2,3$ in Theorem \ref{thmlocgen}, i.e. the analogous of Proposition \ref{prophomvareqcohom}. In order to do so, we will need to understand (some) inclusions of fixed points inside Schubert varieties. In the end we will prove an equivariant Lefschetz hyperplane theorem, relating the equivariant cohomology of the symplectic Grassmannian with that of the bisymplectic one. 

\begin{remark}
From now on we will denote by $f_I(J)=f_{\sigma_I}(p_J)$. 
\end{remark}

The problem of determining the inclusions of fixed points in the case of the bisymplectic Grassmannian is more difficult to deal with. In order to understand this problem, notice that $\I2Gr(k,V)\subset \IGr(k,V)$ implies that if $p_J\in\sigma_I\subset \sigma_I'$, then $I\geq J$. Moreover $\geq$ is a partial order relation on the admissible subsets of $\{\pm 1,\dots,\pm n\}$ of cardinality $k$. We define the relation $\geq_\in$ on admissible subsets: $I\geq_\in J$ if and only if there exist admissible subsets $J=J_1,J_2,\dots,J_u=I$ such that $p_{J_i}\in \sigma_{J_{i+1}}$ for $i=1,\dots,u-1$. This relation is by construction reflexive and transitive. Moreover, it is skew-symmetric because if $I\neq J$, $I\geq_\in J$ and $J\geq_\in I$, then $J\geq I\geq J$ and $I\neq J$, which is a contradiction by the definition of $\geq$. As a result, $\geq_\in$ is a partial order relation on admissible subsets of $\{\pm 1,\dots,\pm n\}$, and as a consequence we get the following:

\begin{lemma}
There exist polynomials $a_{I,J}\in \CC[\epsilon_1,\dots,\epsilon_n]$ of degree $\codim(\sigma_J)-\codim(\sigma_I)-1$ such that
\begin{equation}
\label{eqindmethbisym}
f_I(\cdot)(f_H(\cdot)-f_H(I))=\sum_{J\in I_{\geq_\in-1}}a_{I,J} f_J(\cdot),
\end{equation}
where $I_{\geq_\in-1}=\{J\mbox{ s.t. }I\geq_\in J,\mbox{ and }\codim(\sigma_J)\leq\codim(\sigma_I)+1\}$. 
\end{lemma}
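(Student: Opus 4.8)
The plan is to prove this lemma by mimicking the inductive structure of Equation \eqref{eqindsym} in the homogeneous case, but using the partial order $\geq_\in$ in place of $\geq$. First I would observe that the product $f_I(\cdot)\bigl(f_H(\cdot)-f_H(I)\bigr)$, viewed as an element of $\HHH^*_T(\I2Gr(2,V))\subset \CC[\Xi(T)]^{\oplus r}$, vanishes at $p_I$ by construction, and vanishes at every $p_J$ with $p_J\notin\sigma_I$ by property $2$ of Theorem \ref{thmlocgen}; hence its support is contained in $\{p_J : p_J\in\sigma_I,\ J\neq I\}$. By the definition of $\geq_\in$ (and the fact, recalled in the paragraph before the lemma, that $\geq_\in$ is a genuine partial order refining/contained in $\geq$), any such $J$ satisfies $I\geq_\in J$ with $J\neq I$, so it lies ``strictly below'' $I$.

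Next I would argue by descending induction on $\codim(\sigma_I)$ (equivalently, on the height in the $\geq_\in$ poset) that the classes $\{f_J(\cdot) : I\geq_\in J\}$ span, over $\CC[\epsilon_1,\dots,\epsilon_n]$, the subspace of $\HHH^*_T(\I2Gr(2,V))$ consisting of elements supported on $\{p_J : I\geq_\in J\}$. The key point is that $f_J(p_J)$ is (up to sign) the product of the $\tau$-positive $T$-weights of $T_J$, which is a nonzero homogeneous polynomial of degree exactly $\codim(\sigma_J)$; so given any class $g$ supported on the lower set of $I$, one peels off the top strata one fixed point at a time, dividing $g(p_J)$ by $f_J(p_J)$ for $J$ maximal in the support — here I would need that $g(p_J)$ is divisible by $f_J(p_J)$, which follows because $\sigma_J$ is smooth at $p_J$ (so property $3$ of Theorem \ref{thmlocgen} identifies $f_J(p_J)$ with the product of normal weights) together with the GKM-type divisibility of property $4$ applied along the $T$-equivariant curves emanating from $p_J$ within the lower set. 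Applying this to $g = f_I(\cdot)\bigl(f_H(\cdot)-f_H(I)\bigr)$ yields the expansion $\sum_J a_{I,J} f_J(\cdot)$ with $a_{I,J}\in\CC[\epsilon_1,\dots,\epsilon_n]$.

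Finally, for the degree and index bookkeeping: $f_H$ has degree $1$, $f_I$ has degree $\codim(\sigma_I)$, so the left-hand side is homogeneous of degree $\codim(\sigma_I)+1$; since $f_J$ is homogeneous of degree $\codim(\sigma_J)$ (property $1$), any term that actually appears forces $\deg(a_{I,J}) = \codim(\sigma_J)-\codim(\sigma_I)-1 \geq 0$, which simultaneously (i) gives the claimed degree of $a_{I,J}$ and (ii) restricts the sum to those $J$ with $\codim(\sigma_J)\leq\codim(\sigma_I)+1$, i.e. to $I_{\geq_\in-1}$. (The terms with $\codim(\sigma_J)=\codim(\sigma_I)+1$ have constant coefficient, the ``Chevalley'' terms; a priori there may also be terms with $\codim(\sigma_J)<\codim(\sigma_I)+1$ carrying a positive-degree polynomial coefficient, which is why $I_{\geq_\in-1}$ is defined with an inequality rather than an equality — unlike the homogeneous case.) The main obstacle I anticipate is the spanning/divisibility step: in the homogeneous setting one uses that $p_J\in\sigma'_I$ implies $\sigma'_J\subset\sigma'_I$ (Remark \ref{remorbitBSchubert}), which fails here, so one must instead run the peeling argument purely through the poset $\geq_\in$ and the finiteness of $T$-equivariant curves (Lemma \ref{lemfinTcurvessym}, Theorem \ref{thmfinitecurvesgen}), checking carefully that the lower set of $I$ in $\geq_\in$ is ``closed'' enough under equivariant curves for the GKM divisibility at each maximal $J$ to be applied without leaving the set.
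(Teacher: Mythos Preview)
Your outline is correct and aligns with the paper's intent; the paper presents this lemma with no proof beyond declaring it ``a consequence'' of $\geq_\in$ being a partial order. One simplification worth noting: the divisibility/peeling step you flag as the main obstacle is not needed. The paper already records that the equivariant Schubert classes $\{f_J\}$ form a free $\CC[\Xi(T)]$-basis of $\HHH^*_T(\I2Gr(2,V))$, so an expansion $g=\sum_L a_L f_L$ with $a_L\in\CC[\epsilon_1,\dots,\epsilon_n]$ exists automatically; the only thing to check is that $a_L=0$ whenever $L$ is not in the strict $\geq_\in$-lower set of $I$. For this, pick such an $L$ that is $\geq_\in$-maximal with $a_L\neq 0$ and evaluate at $p_L$: since $f_M(p_L)\neq 0$ forces $p_L\in\sigma_M$ and hence $M\geq_\in L$, maximality (together with the fact that $M$ in the lower set and $M\geq_\in L$ would drag $L$ into the lower set as well) yields $0=g(p_L)=a_L f_L(p_L)$, a contradiction. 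Your GKM-divisibility route can also be completed --- a $T$-curve through $p_J$ with $\tau$-positive weight has its other endpoint $p_K$ with the curve lying in $\sigma_K$, so $K\geq_\in J$ strictly --- but it is extra work.
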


In the next section, we will use this lemma to obtain an equivariant Chevalley formula for the multiplication of Schubert varieties with $f_H$. Now just notice that in general we are looking for coefficients $a_{I,J}$'s which are not constants, but actual polynomials; determining even one of them may need the use of a lot of relations. This problem comes from the fact that, as we are in the non-homogeneous case, the fact that $p_J\in \sigma_I$ does not necessarily imply that $\sigma_J\subset \sigma_I$, or, more concretely, that $\codim(\sigma_J)>\codim(\sigma_I)$. Hence we get that the coefficients $a_{I,J}$ may very well not be constant. However, for the Grassmannians of planes, this problem can be controlled:

\begin{lemma}
\label{lemmacodimnullneg}
Suppose that $p_J\in \sigma_I$ and $\codim(\sigma_J)\leq \codim(\sigma_I)$. Then $\codim(\sigma_J)= \codim(\sigma_I)=2n-3$, $I=(i,-i+1)$ and $J=(i-1,-i)$ or $J=(i,-i-1)$.
\end{lemma}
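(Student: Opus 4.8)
The plan is to exploit the complete-intersection description of $\I2Gr(2,V)$ inside $\Gr(2,V)$ (equivalently, inside $\IGr(2,V)$) together with the explicit weight description of the tangent spaces $T_I$ from Lemma \ref{lemweightsbisym}. The key numerical input is the codimension formula
\[
\codim(\sigma_I)=\#\{(i,j)\text{ s.t. }i\notin I\cup(-I),\,j\in I,\,j>i\}+\#\{j\in I\text{ s.t. }j<0\},
\]
which for $k=2$ depends only on the two entries of $I$. First I would observe that, since $\I2Gr(2,V)\subset\IGr(2,V)$ and the Bialynicki--Birula decompositions are compatible with the inclusion, $p_J\in\sigma_I$ forces $p_J\in\sigma'_I$, hence $I\geq J$ by Lemma \ref{leminclpoints}. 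So throughout we may work with pairs $I=(a,b)$, $J=(c,d)$ with $a\ge b$, $c\ge d$, $a\ge c$, $b\ge d$, all admissible.

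Next I would make the codimension bound $\codim(\sigma_J)\le\codim(\sigma_I)$ explicit. Writing the codimension of a pair $(a,b)$ as an explicit function $\kappa(a,b)$ of $a,b$ (using the displayed formula: the ``$j>i$'' count contributes the number of integers strictly between the $\pm$-classes, and the negativity count contributes $\#\{a<0\}+\#\{b<0\}$), the monotonicity of $\kappa$ under $\geq$ is easy to check when the two entries move ``independently''. The inequality $\kappa(c,d)\le\kappa(a,b)$ together with $a\ge c$, $b\ge d$ should force the moves $a\rightsquigarrow c$ and $b\rightsquigarrow d$ to be minimal and to trade a gain in one coordinate against a loss in another — which in the $k=2$ case can only happen at the ``equator'', i.e.\ when passing a negative index through its positive mirror. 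A short case analysis on the signs of $a,b,c,d$ and whether $\{|a|,|b|\}$ and $\{|c|,|d|\}$ overlap should then pin down $\codim(\sigma_J)=\codim(\sigma_I)$ and the shape $I=(i,-i+1)$, $J=(i-1,-i)$ or $J=(i,-i-1)$; computing $\kappa(i,-i+1)$ directly gives the value $2n-3$ (consistent with $\I2Gr(2,2n)$ having dimension $4(n-2)+2=4n-6$, and $2n-3$ being roughly the ``middle'').

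The more delicate half is upgrading ``$I\ge J$ and small codimension'' to the actual geometric statement $p_J\in\sigma_I$ — that is, ruling out the pairs that survive the numerical analysis but for which $p_J$ does \emph{not} lie on $\sigma_I$, and conversely confirming membership in the two listed configurations. For membership I would exhibit, in each of the two cases, an explicit $\tau$-invariant curve (necessarily of type $\alpha$, by Lemma \ref{lemfinTcurvessym}) joining $p_I$ to $p_J$ and lying in $\I2Gr(2,V)$: since $\#(I\cap J)=1$ in both cases, such an $\alpha$-line is a candidate, and one checks its generic point is isotropic for the pencil $\langle\omega_1,\omega_2\rangle$ using the diagonal normal form of Remark \ref{remdiagforms} — this is where the bisymplectic (as opposed to merely symplectic) condition enters and is the main obstacle, because not every type-$\alpha$ line of $\IGr(2,V)$ through $p_I$ sits inside $\I2Gr(2,V)$, so one must verify the chain actually stays on the complete intersection. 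For the converse (no other $J$ with $\codim(\sigma_J)\le\codim(\sigma_I)$ can satisfy $p_J\in\sigma_I$) I would argue that any such $J$ would have to be reachable from $p_I$ by a chain of $\alpha$-curves inside $\I2Gr(2,V)$ (compatibility of BB with the curve description), and analyze how $\kappa$ changes along a single $\alpha$-step: each step changes exactly one entry of the pair, and I expect that only the ``equatorial'' step $j\mapsto j-1$ crossing from $+1$-past-$0$ region fails to strictly increase codimension — all other single steps strictly increase it, so a chain that does not raise the codimension can consist of at most one such equatorial step, giving precisely the two listed $J$'s. Assembling the sign case analysis for the numerics with this one-step codimension computation, and the explicit isotropy check for the two surviving configurations, completes the proof.
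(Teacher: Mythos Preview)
Your proposal has two substantive problems.

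First, you have misread the statement as a biconditional. The lemma asserts only that \emph{if} $p_J\in\sigma_I$ and $\codim(\sigma_J)\le\codim(\sigma_I)$, \emph{then} $I,J$ are of the listed form; it does not claim the converse. Your ``more delicate half'' --- confirming that $p_J\in\sigma_I$ actually holds for the two listed configurations --- is therefore unnecessary, and in fact it is \emph{false}: the very next result in the paper, Lemma~\ref{lemnotinclusions}, shows that for $I=(i,-i+1)$ and $J=(i-1,-i)$ one has $p_J\notin\sigma_I$. So attempting to exhibit a curve in $\sigma_I$ through $p_{(i-1,-i)}$ would fail.

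Second, your proposed ``converse'' argument --- that $p_J\in\sigma_I$ forces a chain of $\alpha$-curves from $p_I$ to $p_J$ inside $\I2Gr(2,V)$, and then tracking how $\kappa$ changes along each step --- is not justified. The equivalence between ``$p_J\in\sigma_I$'' and ``reachable by a chain of $T$-curves'' is established only for the homogeneous $\IGr(k,V)$ (Lemma~\ref{leminclpoints}), where it relies on the Borel action and Remark~\ref{remorbitBSchubert}. In $\I2Gr(2,V)$ there is no Borel group acting, and the failure of this very property is precisely what makes the bisymplectic situation subtle. You cannot invoke it here.

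The paper bypasses both issues with a short comparison to $\IGr(2,V)$. The tangent space $T_{\IGr(2,V),p_I}$ has exactly one extra weight, $-\epsilon_{i_1}-\epsilon_{i_2}$, beyond those of $T_I$. This weight is $\tau$-negative iff $i_1+i_2>0$; hence
\[
\codim(\sigma'_I)=\begin{cases}\codim(\sigma_I)&\text{if }i_1+i_2>0,\\ \codim(\sigma_I)+1&\text{if }i_1+i_2<0,\end{cases}
\]
and moreover $i_1+i_2>0$ forces $\codim(\sigma_I)\le 2n-3$ while $i_1+i_2<0$ forces $\codim(\sigma_I)\ge 2n-3$. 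Since $\IGr(2,V)$ \emph{is} homogeneous, $p_J\in\sigma_I\subset\sigma'_I$ gives the strict inequality $\codim(\sigma'_J)>\codim(\sigma'_I)$. Combining this with $\codim(\sigma_J)\le\codim(\sigma_I)$ and the displayed dichotomy immediately yields $i_1+i_2>0$, $j_1+j_2<0$, $\codim(\sigma_J)=\codim(\sigma_I)=2n-3$, and $\codim(\sigma'_J)=\codim(\sigma'_I)+1$. The classification of $I$ with $i_1+i_2>0$ and $\codim=2n-3$ gives $I=(i,-i+1)$, and the covers $J\le I$ in the Bruhat order of $\IGr$ with $j_1+j_2<0$ are exactly $(i-1,-i)$ and $(i,-i-1)$. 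No geometric membership checks are needed.
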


\begin{proof}
We will prove the lemma by comparison with the symplectic Grassmannian. The weights of the action of $T$ on $T_{\IGr(2,V),p_I}$ are
\[
-2\epsilon_i \mbox{ for } i\in I \mbox{ , }
\epsilon_i -\epsilon_j \mbox{ for } i\notin I\cup (-I),\,\,\, j\in I \mbox{ and }
\]
\[
-\epsilon_{i_1}-\epsilon_{i_2} \mbox{ for }i_1>i_2\in I.
\]
Let $I=(i_1>i_2)$. If $i_1+i_2>0$, then the codimension of $\sigma'_I$ inside $\IGr(2,V)$ is the same as that of $\sigma_I$ inside $\I2Gr(2,V)$, and it is $\leq 2n-3$; if $i_1+i_2<0$, the codimension of $\sigma'_I$ inside $\IGr(2,V)$ is equal to $\codim(\sigma_I)+1\geq 2n-2$. Moreover $p_J\in \sigma_I$ implies that $p_J\in \sigma'_I$ and $\codim(\sigma'_J)>\codim(\sigma'_I)$. 

Therefore if $p_J\in \sigma_I$ and $\codim(\sigma_J)\leq \codim(\sigma_I)$, then the only possibility is that $\codim(\sigma_J)= \codim(\sigma_I)=2n-3$. As a consequence, $I$ must be of the form $I=(i,-i+1)$ for a certain $2\leq i\leq n$, and this forces either $J=(i-1,-i)$ or $J=(i,-i-1)$. 
\end{proof}

Moreover, some of the inclusions which hold in $\IGr(2,V)$ do not hold in $\I2Gr(2,V)$:

\begin{lemma}
\label{lemnotinclusions}
Let $I=(i,-i+1)$. If $J=(i-1,-i)$ or $J=(i-1,-i-1)$ or $J=(i-2,-i)$, then $p_J\notin \sigma_I$.
\end{lemma}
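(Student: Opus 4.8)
The plan is to argue geometrically in $\Gr(2,V)$, using the explicit form of the two skew-symmetric forms from Remark \ref{remdiagforms} together with the description of the Bialynicki-Birula cell of $p_I$ via the $\tau$-weights of $T_I$ computed in Lemma \ref{lemweightsbisym}. Recall that the Schubert cell of $p_I$ inside $\I2Gr(2,V)$ is the set of points flowing to $p_I$ under $\tau = \diag(t^n,\dots,t,t^{-1},\dots,t^{-n})$, and near $p_I$ it is parametrised by the $\tau$-negative weight directions of $T_I$; these directions, for $I=(i,-i+1)$, are $-2\epsilon_i$ (coming from the $K_i$-block) together with the $\epsilon_a-\epsilon_b$ with $a\notin I\cup(-I)$, $b\in I$ and $j^*(\epsilon_a-\epsilon_b)<0$. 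The first step is therefore to write down, for $I=(i,-i+1)$, an explicit affine chart of $\Gr(2,V)$ around $p_I$, impose the two quadratic equations $\omega_1(u_1,u_2)=\omega_2(u_1,u_2)=0$ cutting out $\I2Gr(2,V)$, and describe $\sigma_I$ as (the closure of) the locus in this chart on which all the non-$\tau$-negative coordinates vanish.

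Second, for each of the three candidate points $p_J$ with $J=(i-1,-i)$, $J=(i-1,-i-1)$, $J=(i-2,-i)$, I would test membership $p_J\in\sigma_I$ by examining the flow: a necessary condition for $p_J\in\overline{\text{cell}(p_I)}$ is that $p_J$ lie in the image of the attracting chart, i.e. that the coordinates of $p_J$ in the $p_I$-chart which are \emph{not} $\tau$-negative weight directions all vanish, \emph{and} that $p_J$ actually satisfies the equations of $\I2Gr(2,V)$ in that chart — but the key point is that the bisymplectic equations (as opposed to the single symplectic equation defining $\IGr(2,V)$) impose an extra constraint that is violated. Concretely, since $p_J\in\sigma'_I\subset\IGr(2,V)$ does hold for these $J$ by Lemma \ref{leminclpoints} (one checks $I\geq J$ in each case), the obstruction must come from the \emph{second} form $\omega_2=\sum\lambda_a x_a\wedge x_{-a}$: the curve, or family of points, inside $\sigma'_I$ degenerating to $p_J$ leaves $\I2Gr(2,V)$ because the $\lambda_a$ are pairwise distinct. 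I expect that in the chart around $p_I$ the relevant limiting one-parameter family has a single free coordinate, say along a weight $\epsilon_a-\epsilon_b$, and that $\omega_2$ vanishes on it only if some combination $\lambda_a-\lambda_b$ or $\lambda_a-\lambda_{i}$ vanishes, contradicting the distinctness hypothesis; hence no point of the cell degenerates to $p_J$ and $p_J\notin\sigma_I$.

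Alternatively — and this may be cleaner to write up — I would proceed by the same comparison-with-$\IGr(2,V)$ bookkeeping used in the proof of Lemma \ref{lemmacodimnullneg}. For $I=(i,-i+1)$ one has $i_1+i_2=1>0$, so $\codim(\sigma'_I)=\codim(\sigma_I)$. If $p_J\in\sigma_I$ then $p_J\in\sigma'_I$, so $I\geq J$, and moreover by Lemma \ref{leminclpoints} there is a chain of $T$-invariant curves of $\IGr(2,V)$ from $p_I$ to $p_J$; but the curves actually contained in $\I2Gr(2,V)$ are only those of type $\alpha$ (Lemma \ref{lemfinTcurvessym}), i.e. those changing one index at a time. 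So I would check that for each of the three listed $J$, \emph{every} chain of $\alpha$-curves from $p_I$ to $p_J$ must pass through an intermediate fixed point $p_{J'}$ which is \emph{not} reachable — e.g. because the required intermediate curve is of type $\beta$, or because some intermediate $J'$ fails $\codim(\sigma_{J'})>\codim(\sigma_I)$ in a way already excluded by Lemma \ref{lemmacodimnullneg}, or because $p_{J'}$ already violates the $\geq$ relation. For instance, passing from $I=(i,-i+1)$ to $(i-1,-i)$ by $\alpha$-curves requires either first moving $-i+1\mapsto -i$ (weight consideration: is this a $\tau$-negative direction at $p_I$ inside $\I2Gr$? it is the flip within no single $K$-block, so it is of type $\beta$, hence not available) or first moving $i\mapsto i-1$ landing at $(i-1,-i+1)$, but then $(i-1,-i+1)\geq(i-1,-i)$ needs the flip $-i+1\mapsto -i$, again $\beta$-type — so the chain is blocked. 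Similar case analysis handles $J=(i-1,-i-1)$ and $J=(i-2,-i)$.

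The main obstacle I anticipate is making the "no chain of $\alpha$-curves works" argument airtight: one must show not merely that the obvious chains fail, but that \emph{all} chains of $T$-invariant curves of $\I2Gr(2,V)$ (equivalently, all sequences of single-index moves staying among admissible subsets that are $\leq I$) from $p_I$ avoid each target $p_J$. This requires a careful finite analysis of the poset of admissible pairs below $I=(i,-i+1)$ under the $\alpha$-move relation, checking that the three excluded $J$'s are precisely the "wrong-side" neighbours that become unreachable once the $\beta$-curves (the block-flips $x_a\wedge x_{-a}$) are removed from the allowed moves. I would organise this as a short lemma on which $\alpha$-moves lower codimension, then read off the three cases; the explicit-chart computation of the previous paragraph is the fallback if the combinatorial argument gets unwieldy for any particular $J$.
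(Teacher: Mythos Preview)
Your second (``cleaner'') approach has a genuine logical gap. You are implicitly using the implication
\[
p_J\in\sigma_I \ \Longrightarrow\ \text{there is a chain of $T$-invariant curves of $\I2Gr(2,V)$ from $p_I$ to $p_J$},
\]
but this is \emph{not} available. Lemma~\ref{leminclpoints} gives that equivalence only for the homogeneous variety $\IGr(2,V)$, where the Bialynicki--Birula cells coincide with Borel orbits (Remark~\ref{remorbitBSchubert}); the paper explicitly warns that this property fails in the bisymplectic case. So even if you verify that no chain of $\alpha$-curves reaches $p_J$ from $p_I$ --- which is what your case analysis does --- you cannot conclude $p_J\notin\sigma_I$. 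The Schubert variety $\sigma_I$ is the \emph{closure} of an affine cell, and closures can contain fixed points not connected to $p_I$ by $T$-curves lying in $\I2Gr(2,V)$; nothing in the setup rules this out a priori.

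Your first approach (affine chart, explicit $\omega_1,\omega_2$-equations) is in the right spirit but also not quite set up correctly: $p_J$ need not lie in the $p_I$-chart at all, so ``test the coordinates of $p_J$ in that chart'' is not a well-posed step. The paper's argument avoids both difficulties by working globally with Pl\"ucker coordinates. One observes that $\sigma_I$ is cut out (inside $\I2Gr(2,V)$) by $q_L=0$ for $I\not\geq L$; for $I=(i,-i+1)$ this already kills every $q_{(a,-a)}$ except $q_{(i,-i)}$ and $q_{(i-1,-i+1)}$, and then the two linear equations $\sum q_{(a,-a)}=0$, $\sum\lambda_a q_{(a,-a)}=0$ (with $\lambda_i\neq\lambda_{i-1}$) force $q_{(i,-i)}=q_{(i-1,-i+1)}=0$ on all of $\sigma_I$. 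Feeding this into the Pl\"ucker quadrics with $(a,b,c,d)=(i,-i+1,i-1,-i),\ (i,-i+1,i-1,-i-1),\ (i,-i+1,i-2,-i)$ yields $q_{(i-1,-i)}=q_{(i-1,-i-1)}=q_{(i-2,-i)}=0$ on $\sigma_I$, and each of the three $p_J$ has precisely that coordinate nonzero. This is a two-line computation once you see to use the Pl\"ucker relations; I would replace the curve-chain argument by it.
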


\begin{proof}
Let us fix some notation. We denote by $q_I$ the Pl\"ucker coordinates on the Grassmannian $\Gr(2,V)$. Then $\Gr(2,V)\subset \PP(\wedge^2 V)$ is defined by the quadratic equations
\begin{equation}
\label{eqpluck2}
q_{(a,b)}q_{(c,d)}-q_{(a,c)}q_{(b,d)}+q_{(b,c)}q_{(a,d)}=0 \mbox{ for }a,b,c,d\in\{\pm1,\dots,\pm n\}.
\end{equation}
Moreover the two equations defining the bisymplectic Grassmannian (and coming from $\omega_1$ and $\omega_2$) are:
\begin{equation}
\label{eqomegapluck2}
\sum_{i=1}^n q_{(i,-i)}=0 \mbox{ and }\sum_{i=1}^n \lambda_i q_{(i,-i)}=0.
\end{equation}
Finally, the Schubert variety $\sigma_I$ is defined by the relations
\[
q_{J}=0 \mbox{ for }I\ngeq J,
\]
while in a neighbourhood of $p_I$ we can suppose that $q_I\neq 0$. The relations defining the Schubert variety $\sigma_I$ and those coming from $\omega_1$ and $\omega_2$ imply that 
\[
q_{(i,-i)}=q_{(i-1,-i+1)}=0.
\]
By using the Plucker equations with $a=i,b=-i+1,c=i-1,d=-i-1$ (respectively $a=i,b=-i+1,c=i-1,d=-i$, $a=i,b=-i+1,c=i-2,d=-i$), one gets that $\sigma_I$ is contained in the locus where $q_{(i-1,-i-1)}=0$ (resp. $q_{(i-1,-i)}=0$, $q_{(i-2,-i)}=0$), which does not contain $p_{J}$ with $J=(i-1,-i-1)$ (resp. $J=(i-1,-i)$, $J=(i-2,-i)$).
\end{proof}

Now we are ready to prove the analogous of Proposition \ref{prophomvareqcohom}:

\begin{theorem}
\label{thmuniShckeqtwo}
The equivariant classes $f_I$ of Schubert varieties inside $\HHH_T^*(\I2Gr(2,V))$ are determined by the relations $1,2,3$ in Theorem \ref{thmlocgen}.
\end{theorem}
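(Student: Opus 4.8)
The plan is to adapt the argument sketched for the homogeneous case (Proposition \ref{prophomvareqcohom}) to the bisymplectic setting, using the combinatorial input gathered in Lemmas \ref{lemmacodimnullneg} and \ref{lemnotinclusions}. Recall the standard inductive scheme: for a fixed $I$, properties $1$ and $2$ force $f_I(p_J)=0$ whenever $p_J\notin\sigma_I$, and property $3$ gives $f_I(p_I)$ as the product of the $\tau$-positive weights of $T_I$ (listed via Lemma \ref{lemweightsbisym}), since $\sigma_I$ is smooth at $p_I$. The remaining values $f_I(p_J)$ for $p_J\in\sigma_I$, $J\neq I$, must be pinned down using only the divisibility relations \eqref{reldeteqcohom} coming from $T$-invariant curves (type $\alpha$, by Lemma \ref{lemfinTcurvessym}). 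The core of the proof is therefore an induction on the poset $(\geq_\in)$ of admissible subsets, ordered so that we first determine $f_I$ at points $p_J$ with $\codim(\sigma_J)$ large and work downward toward $p_I$.

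First I would set up the induction precisely: fix $I$; we want to show $f_I$ is uniquely determined on $\sigma_I^T$. Suppose inductively that $f_I(p_{J'})$ is known for all admissible $J'$ with $J'\geq_\in J$ (strictly) that lie in $\sigma_I$, and we wish to recover $f_I(p_J)$. The key point is that $p_J\in\sigma_I$ is reached from $p_I$ by a chain of type-$\alpha$ curves inside $\I2Gr(2,V)$, and along the last such curve $C$ with endpoints $p_J$ and some $p_{J'}\in\sigma_I$ with $J'>_{\in}J$, the character $\chi_C$ divides $f_I(p_{J'})-f_I(p_J)$. Since $\deg f_I(p_J)=\codim(\sigma_I)=\deg f_I(p_{J'})$ and everything is homogeneous, in the generic situation ($\codim\sigma_J>\codim\sigma_I$, forcing $f_I(p_J)=0$ by degree) there is nothing to do; the real content is when several curves out of $p_J$ give several congruences whose common solution is unique. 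As in the homogeneous argument, one checks that the $\tau$-positive weights at $p_J$ that correspond to the curves connecting $p_J$ to strictly greater fixed points in $\sigma_I$ are pairwise non-proportional, so that knowing $f_I$ modulo each of them, together with the degree bound, determines $f_I(p_J)$ uniquely (a polynomial of degree $d$ known modulo $d+1$ pairwise non-proportional linear forms, or known to vanish at the appropriate points, is unique).

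The main obstacle — and the reason Lemmas \ref{lemmacodimnullneg} and \ref{lemnotinclusions} were proved — is precisely the failure of $p_J\in\sigma_I$ to imply $\codim(\sigma_J)>\codim(\sigma_I)$, which happens (Lemma \ref{lemmacodimnullneg}) only for $I=(i,-i+1)$ and $J=(i-1,-i)$ or $J=(i,-i-1)$, both with codimension $2n-3$. In that degenerate case the degree argument does not force $f_I(p_J)=0$, so one must exhibit enough type-$\alpha$ curves through $p_J$ connecting it to fixed points $p_{J'}\in\sigma_I$ of strictly smaller codimension (hence with $f_I(p_{J'})$ already known and, crucially, possibly zero) to cut out $f_I(p_J)$. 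Here Lemma \ref{lemnotinclusions} is used to rule out the spurious candidates $p_{J'}$ (namely $J'=(i-1,-i)$, $(i-1,-i-1)$, $(i-2,-i)$) that belong to $\sigma_I'$ but not to $\sigma_I$, so that the congruences we write down are genuinely valid in $\I2Gr(2,V)$; combined with Lemma \ref{lemmacodimnullneg} this leaves exactly the configuration needed to pin $f_I(p_J)$ down. I would treat this exceptional case by hand: write out the (few) $T$-curves incident to $p_J=(i-1,-i)$ or $(i,-i-1)$, use the known vanishing/values at their other endpoints, and verify that the resulting system of divisibilities has a unique homogeneous solution of degree $2n-3$. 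Once the exceptional case is dispatched, the general inductive step goes through as in the homogeneous proof, and the theorem follows.
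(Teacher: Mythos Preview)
Your outline has the right ingredients but contains a genuine error in the ``generic'' step. You write that when $\codim\sigma_J>\codim\sigma_I$ the degree bound forces $f_I(p_J)=0$; this is false. The restriction $f_I(p_J)$ is a homogeneous polynomial of degree $\codim(\sigma_I)$ regardless of $J$, and for most $p_J\in\sigma_I$ it is nonzero (already for $I=H$ the hyperplane class one has $f_H(p_J)\neq0$ at every $J\neq(n,n-1)$). So there is real work to do at every such $p_J$, not ``nothing''. What the degree bound \emph{does} kill is the \emph{difference} of two candidates: the paper's argument takes a second tuple $g$ satisfying properties $1,2,3$, sets $h=f_I-g$, and observes that $h$ vanishes at $p_I$ (property~$3$) and at every $p_J\notin\sigma_I$ (property~$2$). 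In the non-exceptional case Lemma~\ref{lemmacodimnullneg} then says $h$ vanishes at every $p_J$ with $\codim(\sigma_J)\le\codim(\sigma_I)$. Choosing a point $p_h$ of minimal $\codim(\sigma_h)$ with $h(p_h)\neq0$, every $\tau$-positive curve through $p_h$ ends at some $p_M$ with $h(p_M)=0$, so the product of these characters --- which is exactly $f_h(p_h)$, of degree $\codim(\sigma_h)>\codim(\sigma_I)$ --- divides $h(p_h)$, a contradiction. Rewriting your induction in terms of $h$ rather than $f_I$ itself is what makes the degree count work.

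Your description of the exceptional case also inverts the role of Lemma~\ref{lemnotinclusions}. The point is not that the excluded $p_{J'}$ would make congruences ``invalid''; the curve relations \eqref{reldeteqcohom} hold for every $\alpha$-curve in $\I2Gr(2,V)$ unconditionally. Rather, knowing that $p_{(i-1,-i-1)}\notin\sigma_I$ (resp.\ $p_{(i-2,-i)}\notin\sigma_I$) tells you that $h$ vanishes there, which \emph{adds} one more linear factor dividing $h(p_h)$ at the bad point $p_h=p_{(i,-i-1)}$ (resp.\ $p_{(i-2,-i+1)}$). That single extra factor pushes the divisor's degree past $\codim(\sigma_I)$ and closes the argument. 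Note also that $p_{(i-1,-i)}$, which you list as one of the two problematic $J$'s, is already excluded from $\sigma_I$ by Lemma~\ref{lemnotinclusions}; the two fixed points of equal codimension that may actually lie in $\sigma_I$ are $p_{(i,-i-1)}$ and $p_{(i-2,-i+1)}$.
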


\begin{proof}
The polynomials $f_{I}(p_J)$ of the equivariant class of a Schubert variety $\sigma_I$ satisfy the relations in Theorem \ref{thmlocgen}. Moreover, by the finiteness of the number of $T$-invariant curves, we have that if two $T$-invariant curves with characters $\chi_1$, $\chi_2$ meet $p_I$, then $\chi_1$ and $\chi_2$ must be prime to each other.

Let us deal first with a Schubert variety $\sigma_I$, where $I$ is not of the form $I=(i,-i+1)$. This hypothesis implies by Lemma \ref{lemmacodimnullneg} that if $p_J\in \sigma_I$, then $\codim(\sigma_J)>\codim(\sigma_I)$. Let us consider an element 
\[
g=(g_1\dots,g_r)\in \HHH_T^*(X)\subset \CC[\Xi(T)]^{\oplus r}
\]
satisfying the relations $1,2,3$ in Theorem \ref{thmlocgen}. Then $f_{I}-g$ is zero over all points $p_J$ such that $\codim(\sigma_J)\leq \codim(\sigma_I)$. We want to prove that $f_{I}-g=0$. Let us suppose that $f_{I}-g\neq 0$. Then we can find a point $p_h\in\sigma_I$ such that $(f_{I}-g)(p_h)\neq 0$ and $\codim(\sigma_h)$ is minimal. Condition \eqref{reldeteqcohom} and the finiteness of the number of $T$-invariant curves implies that $(f_{I}-g)(p_h)$ must be divisible by $f_{h}(p_h)$ (because the weights of the tangent at any fixed point $p_L$ are exactly those of the $T$-equivariant curves linking the point to points $p_M$ with $M\geq L $); but 
\[
\deg((f_{I}-g)(p_h))=\codim(\sigma_J)<\codim(\sigma_h)=\deg(f_{h}(p_h)),
\] 
which gives a contradiction.

The previous argument must be adapted when $I=(i,-i+1)$. When this is the case, $\sigma_I$ can contain at most two points $p_h$ such that $\codim(\sigma_h)=\codim(\sigma_I)$, namely $h=(i,-i-1)$ and $h=(i-2,-i+1)$. Suppose for example that $h=(i,-i-1)$. In this case, as by Lemma \ref{lemnotinclusions} $p_{(i-1,-i-1)}\notin \sigma_I$, if $(f_{\sigma_I}-g)(p_h)\neq 0$ it must be divisible by $f_{\sigma_h}(p_h)(t_i-t_{i-1})$, whose degree is grater than the codimension of $\sigma_I$. Similarly when $h=(i-2,-i+1)$ because $p_{(i-2,-i)}\notin \sigma_I$.
\end{proof}

\subsubsection{An equivariant Lefschetz Hyperplane Theorem}

Let $i:\I2Gr(k,V)^T\to \I2Gr(k,V)$ be the inclusion of the fixed points, and $j:\I2Gr(k,V)\to \IGr(k,V)$ the natural inclusion. As $(i\circ j)^*:\HHH_T^*(\IGr(k,V))\to \HHH_T^*(\I2Gr(k,V)^T)$ is an inclusion (because $\I2Gr(k,V)^T=\IGr(k,V)^T$), we get that $j^*$ is injective as well. We will denote by $f_{\sigma'_I}(J)$ the pullback of the equivariant classes of Schubert subvarieties of $\IGr(k,V)$ inside $ \HHH_T^*(\I2Gr(k,V)^T)$. Moreover, let
\[
f_I f_J =\sum_{L}N_{I,J}^L f_L
\]
be the multiplication rule inside $\HHH_T^*(\I2Gr(k,V))$, and
\[
f_{\sigma_I} f_{\sigma_J} =\sum_{L}M_{I,J}^L f_{\sigma_L}
\]
the multiplication rule inside $\HHH_T^*(\IGr(k,V))$, where $I,J,L$ are admissible subsets and $N_{I,J}^L, M_{I,J}^L$ are polynomials of the right degree.

The Lefschetz hyperplane theorem says that the restriction of the cohomology of an ambient variety $X$ to an hypersurface $Y$ is an isomorphism in codimension $< \dim_{\CC}(X)$. The following proposition is an equivariant version of this classical result for bisymplectic Grassmannians of planes:

\begin{theorem}[Equivariant Lefschetz]
Let $I$ be an admissible subset of $\I2Gr(2,V)$ such that $\codim(\sigma_I)<2n-3$. Then 
\[
f_I=f_{\sigma'_I}.
\]
Moreover, let $J,L$ be two admissible subsets as well such that $\codim(\sigma_J)<2n-3$ and $\codim(\sigma_L)<2n-3$. Then 
\[
M_{I,J}^L=N_{I,J}^L.
\]
\end{theorem}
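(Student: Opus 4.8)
The plan is to exploit the fact, established in Theorem~\ref{thmuniShckeqtwo}, that for $\I2Gr(2,V)$ the equivariant Schubert classes are \emph{uniquely} characterized by the local conditions $1,2,3$ of Theorem~\ref{thmlocgen}, together with the same unicity statement for the homogeneous space $\IGr(2,V)$ (Proposition~\ref{prophomvareqcohom}). So the strategy is: show that the restriction $f_{\sigma'_I}$ (the pullback via $j^*$ of the equivariant Schubert class of $\IGr(2,V)$) already satisfies all the defining properties of $f_I$ whenever $\codim(\sigma_I)<2n-3$, and conclude $f_I=f_{\sigma'_I}$ by unicity. Concretely, one must check: (a) the degree of $f_{\sigma'_I}$ equals $\codim(\sigma_I)$; (b) $f_{\sigma'_I}(p_J)=0$ whenever $p_J\notin\sigma_I$; (c) $f_{\sigma'_I}(p_I)$ is the product of the $\tau$-positive $T$-weights of $T_{\I2Gr(2,V),p_I}$.

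For (a): by the last remark of Section~\ref{secBBdecomposition} (which tells us $b^0=1,\ b^1=1,\ b^2=2$, and more generally via Lemma~\ref{lemmacodimnullneg}), when $I$ is not of the form $(i,-i+1)$ the codimension of $\sigma'_I$ in $\IGr(2,V)$ equals the codimension of $\sigma_I$ in $\I2Gr(2,V)$; the hypothesis $\codim(\sigma_I)<2n-3$ excludes exactly the subsets $I=(i,-i+1)$ (whose Schubert variety has codimension $2n-3$), so we are always in the case where the two codimensions agree, i.e.\ $i_1+i_2>0$ for $I=(i_1>i_2)$. For (b): $p_J\notin\sigma_I$; one invokes $\I2Gr(2,V)\subset\IGr(2,V)$ — if $p_J\notin\sigma_I$ but $p_J\in\sigma'_I$, the vanishing $f_{\sigma'_I}(p_J)=0$ would fail, so the point is to argue that under the low-codimension hypothesis $p_J\notin\sigma_I \iff p_J\notin\sigma'_I$. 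This uses Lemma~\ref{lemnotinclusions}: the only inclusions of fixed points that \emph{differ} between the symplectic and bisymplectic Grassmannians occur for $I=(i,-i+1)$, which is again excluded. For (c): compare Lemma~\ref{lemweightsbisym} with the weight list for $T_{\IGr(2,V),p_I}$ displayed in the proof of Lemma~\ref{lemmacodimnullneg}; the symplectic tangent space has the extra weights $-\epsilon_{i_1}-\epsilon_{i_2}$ for $i_1>i_2\in I$, and when $i_1+i_2>0$ this weight is $\tau$-positive iff... wait, $j^*(-\epsilon_{i_1}-\epsilon_{i_2})=-(i_1+i_2)<0$, so it is $\tau$-negative and does \emph{not} contribute to $f_{\sigma'_I}(p_I)$; hence the product of $\tau$-positive weights is the same for both tangent spaces. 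This gives $f_I=f_{\sigma'_I}$.

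For the structure-constant statement, the plan is to transport the multiplication equality through $j^*$. Since $j^*$ is a ring homomorphism and injective, and since $f_{\sigma_I}f_{\sigma_J}=\sum_L M_{I,J}^L f_{\sigma_L}$ in $\HHH^*_T(\IGr(2,V))$, applying $j^*$ gives $f_{\sigma'_I}f_{\sigma'_J}=\sum_L M_{I,J}^L f_{\sigma'_L}$ in $\HHH^*_T(\I2Gr(2,V))$. By the first part, $f_{\sigma'_I}=f_I$ and $f_{\sigma'_J}=f_J$ (low codimension), so $f_I f_J=\sum_L M_{I,J}^L f_{\sigma'_L}$. On the other hand $f_I f_J=\sum_L N_{I,J}^L f_L$. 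Now split the sum by codimension: a summand with $\codim(\sigma_L)<2n-3$ has $f_{\sigma'_L}=f_L$, while for the remaining $L$ (with $\codim(\sigma_L)\ge 2n-3$, hence $=2n-3$ or $2n-2$ by Lefschetz on Betti numbers) one needs to show $f_{\sigma'_L}$ is expressible in terms of the $f_M$ for which $\codim(\sigma_M)\le 2n-3$ — or more directly, restrict the whole identity to the points $p_M$ with $\codim(\sigma_M)< 2n-3$, where $f_L(p_M)=0$ for $\codim(\sigma_L)>\codim(\sigma_M)$, so only the low-codimension $L$ survive on both sides, and then use the linear independence of $\{f_L : \codim(\sigma_L)<2n-3\}$ restricted to those points to match coefficients $N_{I,J}^L=M_{I,J}^L$ for $\codim(\sigma_L)<2n-3$. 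The hypothesis $\codim(\sigma_L)<2n-3$ in the statement is exactly what makes this bookkeeping work.

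The main obstacle is step (b)/(c) above — verifying that no "new" fixed-point inclusions or weight contributions appear in the bisymplectic geometry below codimension $2n-3$, i.e.\ making precise that the exceptional behaviour is confined to $I=(i,-i+1)$; everything else is formal manipulation with $j^*$ and degree/codimension bookkeeping. One should be slightly careful that the $M_{I,J}^L$ with $L$ in the top degrees are genuinely irrelevant when restricting to low-codimension points, which is why the restriction-to-points argument (rather than trying to invert $j^*$) is the clean way to finish.
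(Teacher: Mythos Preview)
Your approach is the paper's: verify that the pulled-back class $f_{\sigma'_I}$ satisfies the characterizing conditions of Theorem~\ref{thmuniShckeqtwo} and conclude by unicity; the paper compresses this into two lines, and your expansion into checks (a), (b), (c) is exactly the way to unpack it. Your treatment of (a) and (c) is correct, and your restriction-to-low-codimension-points argument for the structure constants is a legitimate way to read the paper's ``follows at once''.

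There is one overstatement in your handling of (b). You invoke Lemma~\ref{lemnotinclusions} to claim that the only fixed-point inclusions that differ between $\sigma_I$ and $\sigma'_I$ occur for $I=(i,-i+1)$. That lemma does not say this: it only exhibits, for $I=(i,-i+1)$, three specific points $p_J\in\sigma'_I\setminus\sigma_I$; it makes no claim that for other $I$ the sets $\sigma_I\cap X^T$ and $\sigma'_I\cap X^T$ coincide. So as written your step (b) is not justified, and in fact the paper does not verify relation~2 for $f_{\sigma'_I}$ either --- it simply asserts it. A clean repair is to observe that the proof of Theorem~\ref{thmuniShckeqtwo} (first case) only uses relation~2 at points $p_J$ with $\codim(\sigma_J)\le\codim(\sigma_I)$ to launch the minimality argument. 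For such $J\ne I$ one has $\codim(\sigma'_J)=\codim(\sigma_J)\le\codim(\sigma_I)=\codim(\sigma'_I)$ (both sides being below $2n-3$), hence $I\not\ge J$ in the Bruhat order on $\IGr(2,V)$, hence $f_{\sigma'_I}(p_J)=0$. The rest of the unicity argument then runs exactly as in Theorem~\ref{thmuniShckeqtwo}, with the one subtlety that when the minimal bad point $p_h$ sits at codimension $2n-3$ one should treat $h=(i,-i+1)$ before $h=(j-1,-j)$, since the $\tau$-positive $\alpha$-curves out of the latter that stay in codimension $2n-3$ all land at points of the former type.
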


\begin{proof}
Let us consider $f_{\sigma'_I}$. By Lemma \ref{lemfinTcurvessym} $f_{\sigma'_I}$ satisfies the relations $1,2,3$ in Theorem \ref{thmlocgen} which are satisfied also by $f_I$, and all the relations in Theorem \ref{thmfinitecurvesgen}. Therefore, by Theorem \ref{thm1symgrass} and Theorem \ref{thmuniShckeqtwo} we get that $f_{\sigma'_I}=f_I$. The second statement follows at once.
\end{proof}

\begin{remark}
For what concerns the other classes, the problem becomes more involved. Indeed, if $\codim(\sigma_I)\geq 2n-3$, the class $f_{\sigma'_I}$ does not satisfy all the relations $1,2,3$ in Theorem \ref{thmlocgen}. For instance, if $\codim(\sigma_I)> 2n-3$, by the proof of Lemma \ref{lemmacodimnullneg} we know that $\codim(\sigma_I) =\codim(\sigma'_I)-1$, and therefore relation $1$ is not satisfied. Finding a formula which expresses all the classes $f_{\sigma'_I}$ in terms of the classes $f_I$ may help understanding better the equivariant cohomology of $\I2Gr(2,V)$. Indeed, one could try to derive an equivariant Pieri formula for multiplication of any Schubert class by a \emph{special} Schubert class, as it is done in \cite{Li2016} for the symplectic (as well as the ordinary and the orthogonal) Grassmannians.
\end{remark}

\subsection{A Chevalley formula}

The following lemma will be useful in the sequel:

\begin{lemma}
The Schubert variety $\sigma_H$, $H=\{ n,\dots, n-k+2,n-k\}$, corresponding to the unique generator of $\Pic(\I2Gr(k,V))$ is represented in equivariant cohomology by the degree $1$ polynomials
\[
f_H(I)=\sum_{i\in I} -\epsilon_i+\sum_{i=1}^k \epsilon_{n-i+1}.
\]
\end{lemma}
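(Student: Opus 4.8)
The plan is to identify the equivariant class $f_H$ directly using the general principles of Theorem~\ref{thmlocgen}, exploiting that $\sigma_H$ is a codimension-one Schubert variety and hence a hypersurface in $\I2Gr(k,V)$. First I would note that $\sigma_H$ is the restriction to $\I2Gr(k,V)$ of the codimension-one Schubert variety $\sigma'_H\subset\IGr(k,V)$, so by the equivariant Lefschetz theorem just proven (since $\codim(\sigma_H)=1<2n-3$ for all $k$ and $n$ in range) we have $f_H=f_{\sigma'_H}$, and it suffices to compute the latter in the symplectic Grassmannian. Alternatively, and more self-containedly, one argues directly: by item (2) of Theorem~\ref{thmlocgen}, $f_H(I)=0$ whenever $p_I\notin\sigma_H$, i.e. whenever $I\ngeq H$; by item (1) it is a homogeneous polynomial of degree $1$ in the $\epsilon_i$; and by item (3), since $\sigma_H$ is smooth at $p_H$, $f_H(H)$ is the product (here just the single element, as the codimension is $1$) of the $T$-characters of the normal bundle $N_{\sigma_H/X,p_H}$, which by Lemma~\ref{lemweightsbisym} is the unique $\tau$-positive weight of $T_H$ not tangent to $\sigma_H$.

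Next I would pin down the candidate formula and verify it against all four conditions. Write $g(I):=\sum_{i\in I}(-\epsilon_i)+\sum_{i=1}^k\epsilon_{n-i+1}$; this is clearly homogeneous of degree $1$, so condition~(1) holds. For condition~(4), the $T$-equivariant curves inside $\I2Gr(k,V)$ are exactly the type~$\alpha$ curves of Lemma~\ref{lemfinTcurvessym}: such a curve joins $p_I$ and $p_J$ with $\#(I\cap J)=k-1$, say $I\smallsetminus J=\{a\}$, $J\smallsetminus I=\{b\}$, and its $T$-character is $\epsilon_a-\epsilon_b$ (or, when $b=-a$, the weight $-2\epsilon_a=2\epsilon_b$, but such a curve is of type $\beta$ unless it actually lies in $\I2Gr$, so for the curves we must consider $I\smallsetminus J=\{a\}$, $J\smallsetminus I=\{b\}$ with $b\neq -a$). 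Then $g(I)-g(J)=-\epsilon_a+\epsilon_b=-(\epsilon_a-\epsilon_b)$, which is visibly divisible by the character $\epsilon_a-\epsilon_b$. Since by Theorem~\ref{thmfinitecurvesgen} (via Theorem~\ref{thm1symgrass}) these divisibility relations characterize $\HHH^*_T(\I2Gr(k,V))$ inside $\CC[\Xi(T)]^{\oplus r}$, the tuple $(g(I))_I$ is a genuine equivariant class. It is homogeneous of degree $1$, hence lies in $\HHH^2_T(\I2Gr(k,V))$; combined with the fact that $b^1_{k,n}=1$ and that $\HHH^2_T$ is a free module over $\HHH^*_T(\pt)$ with the degree-$2$ part generated by $f_H$ together with the $\epsilon_i$, we get $g=\lambda f_H+\sum c_i\epsilon_i$ for scalars $\lambda,c_i$.

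Then I would fix these constants by evaluating at well-chosen fixed points. Evaluating at $p_H$: since $\sigma_H$ is smooth at $p_H$ of codimension $1$, $f_H(H)$ equals the one $\tau$-positive normal weight, and a direct check from Lemma~\ref{lemweightsbisym} (the weights at $p_H$ are $-2\epsilon_i$ for $i\in H$ and $\epsilon_i-\epsilon_j$ for $i\notin H\cup(-H)$, $j\in H$; the $\tau$-positive ones are the tangent directions to $\sigma_H$ except for exactly one) identifies $f_H(H)$ explicitly, and one verifies $g(H)$ equals the same value. Evaluating at a point $p_I$ with $I\ngeq H$ forces $f_H(I)=0$; checking that $g(I)$ also vanishes there — which is a short combinatorial verification using that $I\ngeq H$ means the sorted entries of $I$ fail to dominate $(n,\dots,n-k+2,n-k)$ — shows the ``error term'' $g-\lambda f_H$ vanishes at enough points to conclude $\lambda=1$ and all $c_i=0$. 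The main obstacle is the bookkeeping in this last step: one must make sure the chosen fixed points where $g$ and $f_H$ are known to agree actually span enough of $\CC[\Xi(T)]^{\oplus r}$ in degree $1$ to force the linear combination; this is where the precise description of which $I$ satisfy $I\ngeq H$, and the smoothness of $\sigma_H$ at $p_H$, do the real work. Everything else is routine character arithmetic.
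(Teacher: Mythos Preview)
Your approach is correct and matches the paper's: verify that the candidate tuple satisfies the curve relations (hence lies in $\HHH^*_T$), vanishes at the top fixed point $\{n,\dots,n-k+1\}$, and equals $\epsilon_{n-k+1}-\epsilon_{n-k}$ at $p_H$, then use that these data determine the degree-$1$ class uniquely (your $b^1_{k,n}=1$ argument is exactly this uniqueness spelled out). The paper's proof compresses all of this into one sentence; note also that the only $I$ with $I\ngeq H$ is the single top subset $\{n,\dots,n-k+1\}$, so the ``short combinatorial verification'' you worry about at the end is just the one-line check $g(\{n,\dots,n-k+1\})=0$.
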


\begin{proof}
We already know that $f_H$ in the equivariant cohomology is uniquely determined by the fact that $f_H(\{ n,\dots, n-k+2,n-k+1\})=0$ and $f_H(H)=-\epsilon_{n-k}+\epsilon_{n-k+1}$. These conditions, together with condition \eqref{reldeteqcohom}, are satisfied by the formula in the statement.
\end{proof}

The next result we want to present is the computation of an equivariant Chevalley formula for bisymplectic Grassmannians of planes, i.e. of the coefficients $a_{I,J}$ appearing in Equation \eqref{eqindmethbisym} for $k=2$. Having these coefficients will permit to compute all the equivariant classes of Schubert varieties, starting from that of maximal codimension up to the one of codimension $0$. We will divide the proof in different lemmas, which deal with different situations. The most difficult part will be understanding the behaviour of classes of middle codimension, because in this case we have coefficients $a_{I,J}$ which are of degree one, and not just constants (Lemma \ref{lemmacodimnullneg}). At the end of the proof we have summarized the Chevalley formula in Theorem \ref{thmChevform}.

The first lemma deals with Schubert varieties for which the Chevalley formula is the same as that of symplectic Grassmannians:

\begin{lemma}
\label{lemnormchev}
Let $I,J$ be admissible subsets such that either $\codim(\sigma_J)<2n-3$ or $\codim(\sigma_I)>2n-3$. If $\#(I\cap J)=1$ and $\codim(\sigma_I)=\codim(\sigma_J)-1$ then $a_{I,J}=1$, otherwise $a_{I,J}=0$.
\end{lemma}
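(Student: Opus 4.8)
The plan is to reduce the bisymplectic computation to the already-known symplectic Chevalley formula via the Equivariant Lefschetz Theorem, handling the two stated cases ($\codim(\sigma_J)<2n-3$ and $\codim(\sigma_I)>2n-3$) separately but by the same mechanism. First recall the defining identity \eqref{eqindmethbisym}: the polynomial $f_I(\cdot)(f_H(\cdot)-f_H(I))$ is supported on the points $p_J$ with $I\geq_\in J$, and in the expansion $\sum a_{I,J}f_J(\cdot)$ the coefficient $a_{I,J}$ has degree $\codim(\sigma_J)-\codim(\sigma_I)-1$. Thus a nonzero $a_{I,J}$ can only occur for $J$ with $\codim(\sigma_J)\geq\codim(\sigma_I)+1$, and when $\codim(\sigma_J)=\codim(\sigma_I)+1$ the coefficient is a constant. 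So the content of the lemma in the range under consideration is: the constant $a_{I,J}$ equals $1$ precisely when $\#(I\cap J)=1$ (equivalently, by Lemma \ref{lemfinTcurvessym} and Lemma \ref{leminclpoints}, when there is a type $\alpha$-curve from $p_I$ to $p_J$), and otherwise $a_{I,J}=0$.

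For the case $\codim(\sigma_J)<2n-3$: since $\codim(\sigma_I)<\codim(\sigma_J)<2n-3$ as well, the Equivariant Lefschetz Theorem gives $f_I=f_{\sigma'_I}$, $f_J=f_{\sigma'_J}$, and $f_H=f_{\sigma'_H}$ (the hyperplane class has codimension $1<2n-3$). Hence Equation \eqref{eqindmethbisym} for $\I2Gr(2,V)$, restricted to these indices, is literally the restriction to $\I2Gr(2,V)^T=\IGr(2,V)^T$ of the symplectic identity \eqref{eqindsym}. The symplectic Chevalley coefficients are recalled right after \eqref{eqindsym}: they are $1$ for an $\alpha$-curve and $2$ for a $\beta$-curve; but $\beta$-curves are not contained in $\I2Gr(k,V)$ (Lemma \ref{lemfinTcurvessym}), and more to the point a $\beta$-curve links $I$ to $J$ with $\#(I\cap J)=k-2=0$, so such a $J$ is not admissible-adjacent in the bisymplectic order in the relevant way — in any case for $k=2$ the only codimension-one-up neighbours $p_J\in\sigma_I$ with $\codim(\sigma_J)<2n-3$ are the $\alpha$-neighbours, and there $M^J_{I,H}=1$, whence $a_{I,J}=N^J_{I,H}=M^J_{I,H}=1$ by the second statement of the Equivariant Lefschetz Theorem. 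For the complementary sub-case ($\codim(\sigma_J)<2n-3$ but $\#(I\cap J)\neq 1$) the symplectic coefficient vanishes, so $a_{I,J}=0$.

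For the case $\codim(\sigma_I)>2n-3$: now by the proof of Lemma \ref{lemmacodimnullneg}, every fixed point $p_J$ with $p_J\in\sigma_I$ and $\codim(\sigma_J)=\codim(\sigma_I)+1$ automatically has $\codim(\sigma_J)>2n-3$, and in this whole "upper" range one checks that $\codim(\sigma'_J)=\codim(\sigma_J)+1$ and the inclusion relation $\geq_\in$ agrees with $\geq$ (there is no codimension-drop phenomenon above the middle, again by Lemma \ref{lemmacodimnullneg}). One then compares, point by point over $\IGr(2,V)^T$, the identity \eqref{eqindmethbisym} with the symplectic one \eqref{eqindsym}: writing $f_{\sigma'_I}$ in terms of the $f_L$'s using that the correction $f_{\sigma'_I}-f_I$ is supported on points of strictly larger symplectic codimension, the degree bookkeeping forces the only surviving constant coefficients to be the $\alpha$-curve ones, each equal to $1$. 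I would spell this out by induction on $\codim(\sigma_I)$ downward from the top class, using \eqref{eqindmethbisym} together with the unicity of Theorem \ref{thmuniShckeqtwo}.

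The main obstacle is the bookkeeping in the second case: above the middle dimension the classes $f_{\sigma'_I}$ and $f_I$ genuinely differ (as the Remark after the Equivariant Lefschetz Theorem points out, relation $1$ of Theorem \ref{thmlocgen} fails for $f_{\sigma'_I}$ there), so one cannot simply invoke Lefschetz; one has to control the difference $f_{\sigma'_I}-f_I$ and check it does not contribute a constant term to the coefficient of any $f_J$ with $\codim(\sigma_J)=\codim(\sigma_I)+1$. This is where Lemma \ref{lemmacodimnullneg} and Lemma \ref{lemnotinclusions} do the real work, guaranteeing that the only "close" neighbours are the honest $\alpha$-neighbours and that the problematic middle-codimension configurations (the $I=(i,-i+1)$ ones, with degree-one coefficients) are exactly the cases \emph{excluded} by the hypotheses of this lemma, and are deferred to the subsequent lemmas.
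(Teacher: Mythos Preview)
Your approach is quite different from the paper's, and Case~2 is not actually proved.

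The paper does \emph{not} invoke the Equivariant Lefschetz Theorem at all. Instead it gives a direct, uniform localization argument that treats both hypotheses at once. The key observation (stated as the first line of the paper's proof) is that under either hypothesis one has $\codim(\sigma_I)-\codim(\sigma_J)=\codim(\sigma'_I)-\codim(\sigma'_J)$: if $\codim(\sigma_J)<2n-3$ then both $I\geq J$ have positive index-sum, and if $\codim(\sigma_I)>2n-3$ then both have negative index-sum, so the bisymplectic and symplectic codimensions shift by the same amount. This already forces $\#(I\cap J)=1$ whenever $p_J\in\sigma_I$ with $\codim(\sigma_J)=\codim(\sigma_I)+1$. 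Then one simply evaluates \eqref{eqindmethbisym} at $p_J$: by Lemma~\ref{lemmacodimnullneg} (and the hypotheses, which exclude the middle-codimension anomalies) the only surviving term on the right is $a_{I,J}f_J(J)$, so
\[
f_I(J)\,(t_{i_2}-t_{j_2}) \;=\; a_{I,J}\,f_J(J),
\]
and combining with the $T$-curve divisibility $f_I(I)\equiv f_I(J)\pmod{t_{i_2}-t_{j_2}}$ one reads off $a_{I,J}=1$ directly. That is the whole proof: two relations, no case split, no Lefschetz, no induction.

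Your Case~1 argument via Lefschetz can be made to work (the reasoning about $\beta$-curves is muddled --- the real point is that a $\beta$-neighbour $J$ has $j_1+j_2<0$, hence $\codim(\sigma_J)\geq 2n-3$, so it violates the hypothesis; it has nothing to do with whether the $\beta$-curve lies in $\I2Gr$). But in Case~2 you have a genuine gap. You correctly note that Lefschetz does not apply when $\codim(\sigma_I)>2n-3$, and then you say you ``would spell this out by induction'' and that ``degree bookkeeping forces'' the answer. This is not a proof: you never explain how to control the difference $f_{\sigma'_I}-f_I$, you never set up the induction, and you never verify that no spurious constant terms appear. The paper's two-line evaluation-at-$p_J$ argument sidesteps this entirely; you should use it instead.
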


\begin{proof}
By hypothesis, we have that $\codim(\sigma_I)-\codim(\sigma_J)=\codim(\sigma'_I)-\codim(\sigma'_J)$. Therefore $p_J\in \sigma_I$ and $\codim(\sigma_I)=\codim(\sigma_J)-1$ implies that $\#(I\cap J)=1$. Let us suppose that $I=(i_1,i_2)$ and $J=(j_1,j_2)$ are admissible subsets, and that $i_1=j_1$ (the case $i_2=j_2$ is treated similarly). By Equation \eqref{eqindmethbisym} and by the fact that there exists a $T$-invariant curve between $p_I$ and $p_J$ of weight $t_{i_2}-t_{j_2}$, we have the two following relations:
\[
f_I(J)(t_{i_2}-t_{j_2})=a_{I,J}f_J(J),
\]
\[
f_I(I)-f_I(J) \mbox{ is divisible by }(t_{i_2}-t_{j_2}).
\]
Putting them together, as $f_I(I)$ and $f_J(J) / (t_{i_2}-t_{j_2})$ are not divisible by $(t_{i_2}-t_{j_2})$, we have that 
\[
f_I(I)-a_{I,J}\frac{f_J(J)}{t_{i_2}-t_{j_2}}\equiv 0 \mbox{ $\modulo$ } (t_{i_2}-t_{j_2})
\]
implies that $a_{I,J}=1$ because this ensures that the LHS is equal to zero (even not modulo $(t_{i_2}-t_{j_2})$). In the other cases when $\codim(\sigma_I)=\codim(\sigma_J)-1$, the coefficient $a_{I,J}=0$ by applying Equation \eqref{eqindmethbisym} to $J$ because $p_J\notin p_I$.
\end{proof}

\begin{figure}
 \caption{Inclusions of fixed points and $T$-invariant curves inside $\I2Gr(2,V)$ in codimension $=2n-3$}
  \label{Figure-1}
\begin{center}
 \begin{tikzpicture}
    \tikzstyle{every node}=[draw,circle,fill=white,minimum size=4pt,
                            inner sep=0pt]

      
      \draw (-6,0) node (1) [label=above: (n\comma -n+1)] {};
      \draw (-4.5,0) node (2) [label=below: (n-1\comma -n+2)] {};
      \draw (-3,0) node (3) [label=above: (n-2\comma -n+3)] {};
      \draw (-1.5,0) node (4) [label=below: (n-3\comma -n+4)] {};
      \draw (-0.5,0) node (0) {};
      \draw (-0.15,0) node (00) {};
      \draw (0.15,0) node (000) {};
      \draw (0.5,0) node (0000) {};
      \draw (1.5,0) node (5) [label=above: (n-4\comma -n+3)] {};
      \draw (3,0) node (6) [label=below: (n-3\comma -n+2)] {};
      \draw (4.5,0) node (7) [label=above: (n-2\comma -n+1)] {};
      \draw (6,0) node (8) [label=below: (n-1\comma -n)] {};

    \draw (1) edge [out=30, in=150] (7);
    \draw (3) edge [out=-25, in=-155] (7);
    \draw (3) edge [out=20, in=160] (5);
    \draw (0) edge [out=-20, in=-160] (5);
    \draw (2) edge [out=-30, in=-150] (8);
    \draw (2) edge [out=25, in=155] (6);
    \draw (4) edge [out=-20, in=-160] (6);
    \draw (4) edge [out=20, in=160] (0000);

\end{tikzpicture}
\end{center}
\end{figure}
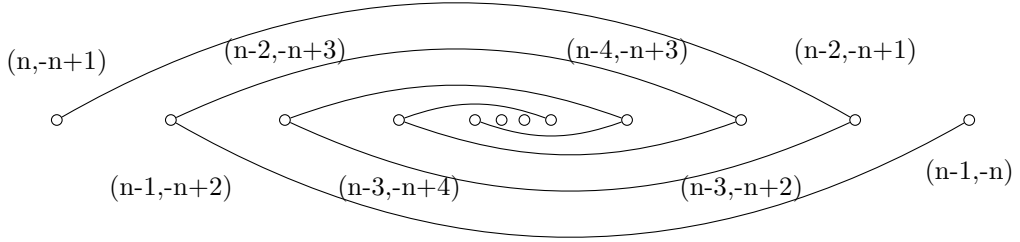

The following lemmas deal with the interesting part of the cohomology of $\I2Gr(2,V)$. In Figure \ref{Figure-1} we reported the inclusions of fixed points whose Schubert varieties have the same codimension (situation described in Lemma \ref{lemmacodimnullneg} and Lemma \ref{lemnotinclusions}). We deal first of all with these inclusions, i.e. with polynomials $a_{I,J}$ of degree $1$:

\begin{lemma}
\label{lemChevnotconst}
If $I=(i,-i+1)$ and $J=(i,-i-1)$ or $J=(i-2,-i+1)$ with $i>0$, then $a_{I,J}=t_{i-1}-t_{i}$.
\end{lemma}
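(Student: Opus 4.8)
The aim is to determine the degree-one polynomial $a_{I,J}$ for $I=(i,-i+1)$ and $J=(i,-i-1)$ (the case $J=(i-2,-i+1)$ being symmetric), using the inductive relation \eqref{eqindmethbisym} together with the divisibility constraints \eqref{reldeteqcohom}. First I would pin down which fixed points and $T$-invariant curves are relevant. By Lemma \ref{lemmacodimnullneg}, $I=(i,-i+1)$ and $J=(i,-i-1)$ both have codimension $2n-3$, so $a_{I,J}$ has degree $\codim(\sigma_J)-\codim(\sigma_I)-1=0$... wait, in the present situation the relation \eqref{eqindmethbisym} picks up $J$ with $\codim(\sigma_J)\le\codim(\sigma_I)+1$, and since here $\codim(\sigma_J)=\codim(\sigma_I)$, the factor $(f_H(\cdot)-f_H(I))$ on the left contributes degree $1$ while $f_J(\cdot)$ has degree $\codim(\sigma_I)$, so $a_{I,J}$ has degree $1$ as asserted. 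I would evaluate \eqref{eqindmethbisym} at the point $p_J$: since $\sigma_I$ contains $p_J$ but every other $\sigma_L$ appearing on the right-hand side with $\codim(\sigma_L)>\codim(\sigma_I)$ vanishes at $p_J$ (as $L\geq_\in J$ would force $\codim(\sigma_L)\le\codim(\sigma_J)$), the only surviving term is $a_{I,J}f_J(p_J)$, giving
\[
f_I(p_J)\bigl(f_H(p_J)-f_H(I)\bigr)=a_{I,J}\,f_J(p_J).
\]

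\textbf{Key steps.} Second, I would compute the three explicit quantities in that identity. Using the Chevalley-type formula for $f_H$ from the preceding lemma, $f_H(p_J)-f_H(p_I)=(-\epsilon_i-\epsilon_{-i-1})-(-\epsilon_i-\epsilon_{-i+1})=\epsilon_{i+1}-\epsilon_{i-1}$ (after rewriting $\epsilon_{-m}=-\epsilon_m$); I would double-check signs here carefully. The factor $f_J(p_J)$ is, by Theorem \ref{thmlocgen}(3), the product of the $\tau$-positive weights of $T_{\I2Gr(2,V),p_J}$, read off from Lemma \ref{lemweightsbisym}. The factor $f_I(p_J)$ is the crux: it is $f_{\sigma_I}$ evaluated at a point of equal codimension, which I would determine by a separate divisibility argument. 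Namely, $f_I(p_J)$ must be divisible by every character of a $T$-equivariant curve through $p_J$ that links $p_J$ to a point $p_M$ with $\codim(\sigma_M)>\codim(\sigma_I)$ (equivalently, $p_J$ not in the closure of $\sigma_I$ along that curve) — and here Lemma \ref{lemnotinclusions} is exactly what is needed, since it tells us $p_{(i-1,-i-1)}\notin\sigma_I$, $p_{(i-1,-i)}\notin\sigma_I$, and $p_{(i-2,-i)}\notin\sigma_I$, forcing several linear factors into $f_I(p_J)$. Together with the degree constraint $\deg f_I(p_J)=\codim(\sigma_I)=2n-3$ and the product formula $f_I(p_I)$ at the smooth point $p_I$, this should determine $f_I(p_J)$ up to a scalar, and comparing one more evaluation (or tracking the curve of weight $t_{i-1}-t_i$ joining $p_I$ and $p_J$, along which $f_I(p_I)-f_I(p_J)$ is divisible by $t_{i-1}-t_i$) fixes the scalar. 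Dividing through in the displayed identity then yields $a_{I,J}=t_{i-1}-t_i$.

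\textbf{Main obstacle.} The hard part is controlling $f_I(p_J)$ when $\codim(\sigma_J)=\codim(\sigma_I)$: unlike in the homogeneous case, $p_J\in\sigma_I$ does not imply $\sigma_J\subset\sigma_I$, so one cannot simply invoke a nested Schubert structure. The argument must instead leverage the \emph{absence} of inclusions established in Lemma \ref{lemnotinclusions} to manufacture enough linear divisors of $f_I(p_J)$, and then use the degree count to show no room is left for ambiguity. A secondary subtlety is bookkeeping with the sign convention $\epsilon_{-m}=-\epsilon_m$ and the identification $\epsilon_m\mapsto m$ under $j^*:\Xi(T)\to\Xi(\tau)$, so that "$\tau$-positive'' weights are selected consistently; getting the orientation of the curve between $p_I$ and $p_J$ right (which of the two is $p_0$, which is $p_\infty$) is what produces $t_{i-1}-t_i$ rather than its negative. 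Finally, I would verify that the same computation, with the roles of the two "descendant'' points swapped, handles $J=(i-2,-i+1)$ and produces the identical coefficient $t_{i-1}-t_i$, completing the lemma.
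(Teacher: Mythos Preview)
Your overall strategy coincides with the paper's: evaluate \eqref{eqindmethbisym} at $p_J$ to obtain
\[
f_I(J)\,\bigl(f_H(J)-f_H(I)\bigr)=a_{I,J}\,f_J(J),
\]
exploit Lemma \ref{lemnotinclusions} to force a linear divisor into the left-hand side, and then pin down the remaining constant via the $T$-invariant curve joining $p_I$ and $p_J$. Two points deserve correction, however.

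First, you aim to determine $f_I(p_J)$ \emph{in full} by stacking up $2n-3$ linear divisors. This is neither necessary nor straightforward to carry out: of the three non-inclusions in Lemma \ref{lemnotinclusions}, only $p_{(i-1,-i-1)}\notin\sigma_I$ corresponds to a $T$-invariant curve through $p_J=(i,-i-1)$ (the other two points are not joined to $p_J$ by an $\alpha$-curve), so a single factor $(t_{i-1}-t_i)\mid f_I(J)$ is all the lemma hands you directly. The paper's argument is leaner: since $f_J(J)$ is \emph{not} divisible by $(t_{i-1}-t_i)$ (Theorem \ref{thmlocgen}(3)), that one factor already forces $a_{I,J}=a\,(t_{i-1}-t_i)$ for some constant $a$, and it remains only to show $a=1$.

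Second, you misidentify the weight of the curve between $p_I=(i,-i+1)$ and $p_J=(i,-i-1)$: the coordinate that changes is the second one, so the character is $\epsilon_{-i+1}-\epsilon_{-i-1}=t_{i+1}-t_{i-1}$, not $t_{i-1}-t_i$. (The curve of weight $t_i-t_{i-1}$ through $p_J$ is the one to $p_{(i-1,-i-1)}$, used in the \emph{first} step above.) With the correct weight, the divisibility $f_I(I)-f_I(J)\equiv 0$ mod $(t_{i+1}-t_{i-1})$ becomes, after substituting $f_I(J)=a\,(t_{i-1}-t_i)\,f_J(J)/(t_{i+1}-t_{i-1})$, the congruence
\[
f_I(I)-a\,\frac{t_{i-1}-t_i}{t_{i+1}-t_{i-1}}\,f_J(J)\equiv 0 \pmod{t_{i+1}-t_{i-1}},
\]
and comparing the explicit products $f_I(I)$, $f_J(J)$ gives $a=1$. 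This is exactly the paper's computation.
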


\begin{proof}
Let us suppose $J=(i,-i-1)$. By Lemma \ref{lemnotinclusions}, we know that $p_{(i-1,-i-1)}\notin \sigma_I$; therefore $f_I((i-1,-i-1))=0$ and the existence of a $T$-invariant curve between $p_J$ and $p_{(i-1,-i-1)}$ gives that 
\[
f_I(J) \mbox{ is divisible by }(t_{i-1}-t_{i}).
\]
As by Equation \eqref{eqindmethbisym}
\[
f_I(J)(t_{i+1}-t_{i-1})=a_{I,J}f_J(J),
\]
and as by Theorem \ref{thmlocgen} $f_J(J)$ is not divisible by $(t_{i-1}-t_{i})$, we get that the coefficient $a_{I,J}$ is of the form $a_{I,J}=a(t_{i-1}-t_{i})$, for a certain constant $a$. We have that $a=1$ by the existence of a $T$-invariant curve between $p_I$ and $p_J$, which gives the relation determining $a$:
\[
f_I(I)-a\frac{t_{i-1}-t_{i}}{t_{i+1}-t_{i-1}}f_J(J)\equiv 0\mbox{ $\modulo$ }(t_{i+1}-t_{i-1}).
\]
If $J=(i-2,-i+1)$, the proof is exactly the same, provided that we replace $p_{(i-1,-i-1)}\notin \sigma_I$ by the fact that $p_{(i-2,-i)}\notin \sigma_I$.
\end{proof}

The following facts can be verified easily: if $\codim(\sigma_I)=2n-2$, then either $I=(i,-i+2)$ with $i>0$ or $I=(2,1)$. By symmetry, if $\codim(\sigma_I)=2n-4$, then $I=(i-2,-i)$ with $i>0$ or $I=(-1,-2)$. Finally, if $\codim(\sigma_I)=2n-3$, then either $I=(i,-i+1)$ or $I=(i-1,-i)$ with $i>0$.

Figure \ref{Figure-3} represents the inclusions of fixed points which are relevant for the following proposition:

\begin{lemma}
Let $I=(i-1,-i)$, with $i>0$. The only non-zero coefficients $a_{I,J}$ are:
\[
a_{I,(i-2,-i)}=a_{I,(i-1,-i-1)}=1.
\]
\end{lemma}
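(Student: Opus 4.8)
The plan is to mimic the structure of Lemma~\ref{lemnormchev} and Lemma~\ref{lemChevnotconst}: since $I=(i-1,-i)$ has $i-1+(-i)=-1<0$, Lemma~\ref{lemmacodimnullneg} (more precisely its proof) tells us $\codim(\sigma_I)=2n-3$ is of the ``negative'' type, and the only candidates $J$ with $I\geq_{\in} J$ and $\codim(\sigma_J)\leq\codim(\sigma_I)+1$ are those with $\codim(\sigma_J)=2n-2$, namely (by the list preceding the statement) $J=(j,-j+2)$ with $j>0$ or $J=(2,1)$. First I would pin down, via the partial order $\geq$ on admissible subsets and Lemma~\ref{lemnotinclusions}, exactly which such $p_J$ lie in $\sigma_I$: the $T$-invariant $\alpha$-curves out of $p_I=(i-1,-i)$ that raise the $\tau$-codimension by one connect to $p_{(i-2,-i)}$ and $p_{(i-1,-i-1)}$ (moving one of the two indices down by one through a type-$\alpha$ curve), and I expect these to be the only fixed points of codimension $2n-2$ contained in $\sigma_I$; everything else gives $a_{I,J}=0$ by applying Equation~\eqref{eqindmethbisym} at $J$ together with $p_J\notin\sigma_I$.

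Next I would compute the two surviving coefficients. For $J=(i-2,-i)$: there is a type-$\alpha$ $T$-invariant curve between $p_I$ and $p_J$ with character $t_{i-1}-t_{i-2}$ (the two subsets differ only in the entry $i-1$ versus $i-2$). Equation~\eqref{eqindmethbisym} evaluated at $J$ gives $f_I(J)\,(t_{i-1}-t_{i-2})=a_{I,J}\,f_J(J)$, and since $\codim(\sigma_J)=\codim(\sigma_I)+1$, the polynomial $a_{I,J}$ has degree $0$, i.e.\ is a constant. Combining this with the divisibility relation $f_I(I)-f_I(J)\equiv 0 \bmod (t_{i-1}-t_{i-2})$ coming from the same curve, and using that $f_I(I)$ (the product of the positive $\tau$-weights of $T_I$, by Theorem~\ref{thmlocgen}(3)) and $f_J(J)/(t_{i-1}-t_{i-2})$ are both prime to $t_{i-1}-t_{i-2}$, the same cancellation argument as in Lemma~\ref{lemnormchev} forces $a_{I,J}=1$. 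The case $J=(i-1,-i-1)$ is symmetric: the relevant $\alpha$-curve has character $t_{-i}-t_{-i-1}=\epsilon_{i+1}-\epsilon_i$ (wait---$J$ is obtained from $I$ by replacing $-i$ with $-i-1$, so the curve character is $-\epsilon_i+\epsilon_{i+1}$), and the identical argument yields $a_{I,(i-1,-i-1)}=1$.

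The main obstacle I anticipate is the first step: verifying that no \emph{other} $p_J$ with $\codim(\sigma_J)=2n-2$ is contained in $\sigma_I=\overline{\sigma_I}$, and in particular that the ``bad'' inclusions of Lemma~\ref{lemnotinclusions}—which concern $I$ of the form $(i,-i+1)$, not $(i-1,-i)$—do not secretly produce a degree-one coefficient here. For this I would argue as in Lemma~\ref{lemmacodimnullneg}: translate $p_J\in\sigma_I$ into $p_J\in\sigma'_I$ inside $\IGr(2,V)$ (using $\I2Gr(2,V)\subset\IGr(2,V)$), note that for $I=(i-1,-i)$ with sum $-1<0$ one has $\codim(\sigma'_I)=2n-2$ while $\codim(\sigma'_J)=2n-2$ as well for the candidate $J$'s, so $p_J\in\sigma'_I$ already forces $\codim(\sigma'_J)>\codim(\sigma'_I)$—contradiction unless $J$ is one of the two listed. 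Equivalently, one can use Lemma~\ref{leminclpoints}: $p_J\in\sigma'_I$ iff $I\geq J$ (as ordered pairs), and a direct check on the finitely many pairs of the prescribed codimension isolates exactly $(i-2,-i)$ and $(i-1,-i-1)$. A small point to be careful about is the exact characters of the two $\alpha$-curves and the sign conventions $\epsilon_{-j}=-\epsilon_j$, but this is routine once Lemma~\ref{lemfinTcurvessym} and Lemma~\ref{lemweightsbisym} are invoked.
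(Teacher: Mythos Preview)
Your proposal is correct and follows the same approach as the paper's two-sentence proof: the key observation (your third paragraph) is that for $I=(i-1,-i)$ every $J$ with $I\geq J$ also has negative index sum, so $\codim(\sigma_I)-\codim(\sigma_J)=\codim(\sigma'_I)-\codim(\sigma'_J)$ and the argument of Lemma~\ref{lemnormchev} applies verbatim. One slip to fix: the list you cite for codimension $2n-2$ is actually the codimension-$(2n-4)$ list (the paper's text has these swapped); your actual candidates $(i-2,-i)$ and $(i-1,-i-1)$ are of the form $(j-2,-j)$, not $(j,-j+2)$, which is why they have the right codimension.
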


\begin{proof}
The proof of this result follows the same lines of the proof of Lemma \ref{lemnormchev}. The reason why this happens is that in this case as well $\codim(\sigma_I)-\codim(\sigma_J)=\codim(\sigma'_I)-\codim(\sigma'_J)$.
\end{proof}

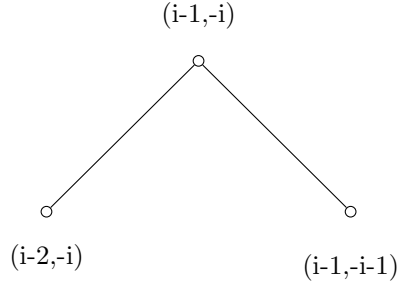
\begin{figure}
 \caption{Inclusions of fixed points inside $\sigma_I$ with $I=(i-1,-i)$, $i>0$}
  \label{Figure-3}
\begin{center}
 \begin{tikzpicture}
    \tikzstyle{every node}=[draw,circle,fill=white,minimum size=4pt,
                            inner sep=0pt]

      \draw (0,2) node (1) [label=above: (i-1\comma -i)] {};
      \draw (-2,0) node (2) [label=below: (i-2\comma -i)] {};
      \draw (2,0) node (3) [label=below: (i-1\comma -i-1)] {};

    \draw (1) -- (2);
    \draw (1) -- (3);

\end{tikzpicture}
\end{center}
\end{figure}

Figure \ref{Figure-2} represents the inclusions of fixed points which are relevant for the following proposition:

\begin{lemma}
\label{lemmaprova}
Let $I=(i,-i+2)$, with $i>0$. The only non-zero coefficients $a_{I,J}$ are:
\[
a_{I,(i,-i+1)}=a_{I,(i-1,-i+2)}=a_{I,(i-3,-i+2)}=a_{I,(i,-i-1)}=1,
\]
\[
a_{I,(i-2,-i+1)}=a_{I,(i-1,-i)}=2.
\]
\end{lemma}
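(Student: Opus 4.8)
The plan is to mimic the argument of Lemma \ref{lemnormchev} and Lemma \ref{lemChevnotconst}, separating the six potential targets $J$ into those for which $\codim(\sigma'_I)-\codim(\sigma'_J)=\codim(\sigma_I)-\codim(\sigma_J)$ (where the symplectic Chevalley coefficient transfers verbatim) and those where a degree-one correction appears. Here $\codim(\sigma_I)=2n-2$, so by the description preceding the lemma we must look at admissible $J$ with $p_J\in\sigma_I$ and $\codim(\sigma_J)=2n-1$; equivalently $I\geq J$ and (since the only $J$ with $\codim(\sigma_J)\le\codim(\sigma_I)$ forcing non-constant $a_{I,J}$ are covered by Lemma \ref{lemmacodimnullneg}) we enumerate the $J$ obtained from $I$ by a single $T$-invariant curve of type $\alpha$, i.e. $\#(I\cap J)=1$. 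Writing $I=(i,-i+2)$, the candidates are $J=(i,-i+1)$, $J=(i-1,-i+2)$, $J=(i-3,-i+2)$, $J=(i,-i-1)$, $J=(i-2,-i+1)$ and $J=(i-1,-i)$; any other admissible subset either fails $I\geq J$ or has the wrong codimension, so $a_{I,J}=0$ by applying Equation \eqref{eqindmethbisym} at $p_J$ exactly as at the end of the proof of Lemma \ref{lemnormchev}.

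For the first four targets one checks that decreasing the surviving coordinate to $i\!+\!1,\ i\!-\!1,\ i\!-\!3$ respectively, or replacing $-i+2$ by $-i-1$, changes the symplectic codimension by exactly one (the sign of the sum of the two entries does not jump across these moves, so the extra ``$+1$'' in $\codim(\sigma'_\bullet)$ relative to $\codim(\sigma_\bullet)$ appears on both sides). Hence, repeating the computation in Lemma \ref{lemnormchev} verbatim — write the two constraints
\[
f_I(J)\,(t_a-t_b)=a_{I,J}\,f_J(J),\qquad f_I(I)-f_I(J)\equiv 0 \ \modulo\ (t_a-t_b),
\]
where $t_a-t_b$ is the weight of the type-$\alpha$ curve joining $p_I$ to $p_J$, and use that $f_I(I)$ and $f_J(J)/(t_a-t_b)$ are coprime to $t_a-t_b$ (Theorem \ref{thmlocgen}(3) together with the coprimality of the tangent weights at a fixed point) — forces $a_{I,J}$ to be the constant $1$.

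The genuinely delicate targets are $J=(i-2,-i+1)$ and $J=(i-1,-i)$, where $\codim(\sigma_J)=2n-1$ equals $\codim(\sigma_I)+1$ but $\codim(\sigma'_J)=\codim(\sigma'_I)$, so a naive count would predict $a_{I,J}$ of degree $0$, yet the symplectic Chevalley coefficient there is $2$ (these are the type-$\beta$ neighbours in $\IGr(2,V)$, visible as conics). The strategy is: first argue, using the Plücker relations exactly as in Lemma \ref{lemnotinclusions}, that $f_I$ vanishes at one further fixed point adjacent to $p_J$ by a type-$\alpha$ curve whose weight is coprime to the $\beta$-weight joining $p_I$ to $p_J$; this forces $a_{I,J}=a\cdot(\text{that weight})$ for a constant $a$ — no, rather it forces $f_I(J)$ to be divisible by an extra linear form so that the degree bookkeeping in Equation \eqref{eqindmethbisym} becomes consistent. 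Then pin down the constant by the $T$-curve relation between $p_I$ and $p_J$ itself (the $\beta$-conic contributes the factor $2$ just as in the symplectic case, via $f_I(I)-a\,(\cdots)\,f_J(J)\equiv 0$ modulo the $\beta$-weight). I expect \emph{this last computation}, reconciling the degree-one phenomenon of Lemma \ref{lemmacodimnullneg} with the value $2$ inherited from the symplectic $\beta$-curves, to be the main obstacle: one has to identify precisely which auxiliary fixed points $f_I$ kills (an analogue of Lemma \ref{lemnotinclusions} for $I=(i,-i+2)$) and check the coprimality of all weights meeting $p_I$, so that Theorem \ref{thmuniShckeqtwo}-style divisibility arguments apply cleanly. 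The remaining vanishings $a_{I,J}=0$ for all other admissible $J$ follow, as above, by evaluating Equation \eqref{eqindmethbisym} at $p_J$ and using $p_J\notin\sigma_I$.
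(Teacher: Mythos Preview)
Your dichotomy between ``easy'' and ``hard'' targets is misplaced, and this is the real gap. You claim that for $J=(i-3,-i+2)$ and $J=(i,-i-1)$ the sign of $j_1+j_2$ does not jump, so the argument of Lemma~\ref{lemnormchev} transfers verbatim. But for both of these $J$ the sum of entries equals $-1<0$ (just as for $(i-2,-i+1)$ and $(i-1,-i)$), whereas for $I=(i,-i+2)$ the sum is $+2$; so the sign \emph{does} jump. Concretely, the direct computation of Lemma~\ref{lemnormchev} requires that when you evaluate Equation~\eqref{eqindmethbisym} at $p_J$ the only surviving term on the right is $a_{I,J}f_J(J)$. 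This fails: for $J=(i-3,-i+2)$ the subset $K=(i-1,-i+2)$ satisfies $p_J\in\sigma_K$ (precisely the phenomenon of Lemma~\ref{lemChevnotconst}, with $K$ playing the role of $(i',-i'+1)$, $i'=i-1$), so $f_K(J)\neq 0$ and a cross-term survives. The same happens for $J=(i,-i-1)$ via $K=(i,-i+1)$. Only $J=(i,-i+1)$ and $J=(i-1,-i+2)$ are genuinely handled by Lemma~\ref{lemnormchev}.

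Your plan for $J=(i-2,-i+1)$ and $J=(i-1,-i)$ is also off: you propose to use ``the $\beta$-conic'' joining $p_I$ to $p_J$, but by Lemma~\ref{lemfinTcurvessym} $\beta$-curves are \emph{not} contained in $\I2Gr(2,V)$, so there is no such $T$-invariant curve to exploit, and no analogue of Lemma~\ref{lemnotinclusions} for $I=(i,-i+2)$ is needed. What the paper does, uniformly for all four ``hard'' $J$'s, is to first use Lemma~\ref{lemChevnotconst} to compute the cross-term $f_{(i,-i+1)}(J)$ or $f_{(i-1,-i+2)}(J)$ as an explicit multiple of $f_J(J)$, insert it into Equation~\eqref{eqindmethbisym} evaluated at $p_J$, and then solve for $a_{I,J}$: for $J=(i-3,-i+2)$ and $J=(i,-i-1)$ a further $T$-curve relation between $p_I$ and $p_J$ pins down $a_{I,J}=1$; for $J=(i-2,-i+1)$ and $J=(i-1,-i)$ (where $\#(I\cap J)=0$ and there is no $\alpha$-curve) the requirement that $f_I(J)$ be a polynomial forces the linear numerator $t_{i-1}-t_i+a_{I,J}(t_i-t_{i-2})$ to be proportional to $f_H(J)-f_H(I)$, yielding $a_{I,J}=2$. (Incidentally, $\codim(\sigma_I)=2n-4$, not $2n-2$; the text just before the lemma has the two codimensions swapped.)
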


\begin{figure}
 \caption{Inclusions of fixed points inside $\sigma_I$ with $I=(i,-i+2)$, $i>0$}
  \label{Figure-2}
\begin{center}
 \begin{tikzpicture}
    \tikzstyle{every node}=[draw,circle,fill=white,minimum size=4pt,
                            inner sep=0pt]

      \draw (0,2) node (1) [label=above: (i\comma -i+2)] {};
      \draw (-4,0) node (2) [label=below: (i\comma -i+1)] {};
      \draw (-2,0) node (3) [label=below: (i-1\comma -i+2)] {};
      \draw (2,0) node (4) [label=below: (i-3\comma -i+2)] {};
      \draw (4,0) node (5) [label=below: (i-2\comma -i+1)] {};
      \draw (6,0) node (6) [label=below: (i-1\comma -i)] {};
      \draw (8,0) node (7) [label=below: (i\comma -i-1)] {};

    \draw (1) -- (2);
    \draw (1) -- (3);
    \draw (1) -- (4);
    \draw (1) -- (5);
    \draw (1) -- (6);
    \draw (1) -- (7);

\end{tikzpicture}
\end{center}
\end{figure}
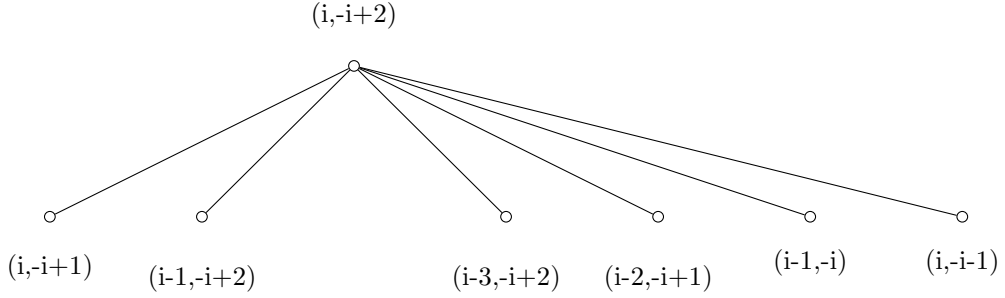

\begin{proof}
The coefficients $a_{I,(i,-i+1)}$ and $a_{I,(i-1,-i+2)}$ are computed as it is done in the proof of Lemma \ref{lemnormchev}. Let us deal with the remaining coefficients for points $p_J\in \sigma_I$ such that $\codim(\sigma_I)=\codim(\sigma_J)-1$:
\begin{itemize}
\item Let $J=(i-3,-i+2)$. By Equation \eqref{eqindmethbisym}, 
\[
f_I(J)(t_i-t_{i-3})=f_{(i-1,-i+2)}(J)+a_{I,J}f_J(J).
\]
Moreover by Lemma \ref{lemChevnotconst} we know that 
\[
f_{(i-1,-i+2)}(J)=\frac{t_{i-2}-t_{i-1}}{t_{i-1}-t_{i-3}}f_J(J).
\]
Therefore, the existence of a $T$-equivariant curve between $p_I$ and $p_J$ of weight $(t_i-t_{i-3})$ gives the relation 
\[
f_I(I)-\frac{t_{i-2}-t_{i-1}+a_{I,J}(t_{i-1}-t_{i-3})}{(t_i-t_{i-3})(t_{i-1}-t_{i-3})}f_J(J)\equiv 0 \mbox{ $\modulo$ }(t_i-t_{i-3}).
\]
As $f_I(I)$ is divisible by $(t_{i-2}-t_{i-3})$ and not by $f_J(J)$, we get that $a_{I,J}=1$.
\item Let $J=(i,-i-1)$. The argument is similar to the previous one; the last relation becomes
\[
f_I(I)-\frac{t_{i-1}-t_{i}+a_{I,J}(t_{i+1}-t_{i-1})}{(t_{i+1}-t_{i-2})(t_{i+1}-t_{i-1})}f_J(J)\equiv 0 \mbox{ $\modulo$ }(t_{i+1}-t_{i-2}).
\]
As $f_I(I)$ is divisible by $(t_{i+1}-t_{i})$, we get that $a_{I,J}=1$.
\item Let $J=(i-2,-i+1)$. Lemma \ref{lemChevnotconst} gives
\[
f_{(i,-i+1)}(J)=(t_{i-1}-t_i)f_J(J).
\]
Using this relation and Equation \eqref{eqindmethbisym} we obtain
\[
f_I(J)(t_i-2t_{i-2}+t_{i-1})=\frac{t_{i-1}-t_i+a_{I,J}(t_i-t_{i-2})}{t_i-t_{i-2}}f_J(J),
\]
which implies that $a_{I,J}=2$.
\item Let $J=(i-1,-i)$. The argument is similar to the previous one; Lemma \ref{lemChevnotconst} and Equation \eqref{eqindmethbisym} give the relation
\[
f_I(J)(2t_i-t_{i-2}-t_{i-1})=\frac{t_{i-2}-t_{i-1}+a_{I,J}(t_i-t_{i-2})}{t_i-t_{i-2}}f_J(J),
\]
which implies that $a_{I,J}=2$.
\end{itemize}
\end{proof}

Figure \ref{Figure-4} represents the inclusions of fixed points which are relevant for the following proposition:

\begin{lemma}
Let $I=(i,-i+1)$, with $i>0$. The only non-zero constant coefficients $a_{I,J}$ are:
\[
a_{I,(i-1,-i-1)}=a_{I,(i-2,-i)}=1,
\]
\[
a_{I,(i,-i-2)}=a_{I,(i-3,-i+1)}=0.
\]
\end{lemma}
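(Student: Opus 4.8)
The plan is to mimic the method used in the proofs of Lemma \ref{lemnormchev} and Lemma \ref{lemmaprova}, extracting each coefficient $a_{I,J}$ from the defining relation \eqref{eqindmethbisym} together with divisibility constraints coming from $T$-invariant curves. Fix $I=(i,-i+1)$; note $\codim(\sigma_I)=2n-3$. The candidate targets $J$ with $\codim(\sigma_J)=2n-2$ and $p_J\in\sigma_I$ are among $(i-1,-i-1)$, $(i-2,-i)$, $(i,-i-2)$, and $(i-3,-i+1)$ (the subsets of the form $(j,-j+2)$ or $(2,1)$ reachable from $I$); by Lemma \ref{lemnotinclusions} we already know $p_{(i-1,-i-1)}\notin\sigma_I$ and $p_{(i-2,-i)}\notin\sigma_I$, which forces $f_I((i-1,-i-1))=f_I((i-2,-i))=0$ and hence, by \eqref{eqindmethbisym} applied to those $J$, gives $a_{I,(i-1,-i-1)}=a_{I,(i-2,-i)}=0$. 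Wait — the statement claims these equal $1$; the subtlety is that these $J$ also appear on the right-hand side through a \emph{chain} $I\geq_\in J$ passing through the codimension-$(2n-3)$ point $(i,-i-1)$ or $(i-2,-i+1)$, so $a_{I,J}$ is read off from the full relation \eqref{eqindmethbisym}, not from membership $p_J\in\sigma_I$. Concretely, I expand $f_I(\cdot)(f_H(\cdot)-f_H(I))$ evaluated at $p_{(i-1,-i-1)}$: since $p_{(i-1,-i-1)}\in\sigma_{(i,-i-1)}$ (which by Lemma \ref{lemChevnotconst} enters with coefficient $t_{i-1}-t_i$) but $p_{(i-1,-i-1)}\notin\sigma_I$, the left side vanishes there while the right side reads $(t_{i-1}-t_i)f_{(i,-i-1)}((i-1,-i-1))+a_{I,(i-1,-i-1)}f_{(i-1,-i-1)}((i-1,-i-1))$, and solving this single linear equation (using the known value $f_{(i,-i-1)}((i-1,-i-1))$, itself a product of tangent weights since $\sigma_{(i,-i-1)}$ is smooth there) yields $a_{I,(i-1,-i-1)}=1$. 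The case $J=(i-2,-i)$ is identical with the chain routed through $(i-2,-i+1)$.

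For the two genuinely new claims $a_{I,(i,-i-2)}=a_{I,(i-3,-i+1)}=0$, I argue by a codimension/divisibility count as in the proof of Theorem \ref{thmuniShckeqtwo}. Take $J=(i,-i-2)$: there is a $T$-invariant ($\alpha$-type) curve from $p_I$ to $p_J$ of weight $t_{i+1}-t_{i-1}$... actually of weight corresponding to moving the second index, so $f_I(I)-f_I(J)$ is divisible by that weight, and \eqref{eqindmethbisym} gives $f_I(J)\cdot(f_H(J)-f_H(I))$ equal to a combination of $f_L(J)$ over $L$ with $I\geq_\in L$, $\codim(\sigma_L)\le 2n-2$. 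The point is that $f_I(J)$ itself, being a degree-$(2n-3)$ polynomial vanishing on enough coordinate hyperplanes forced by Lemma \ref{lemnotinclusions} applied now from the vertex $J$ (namely $p_{(i-1,-i-2)}\notin\sigma_I$ and similarly one more non-inclusion), is divisible by a product of linear forms of total degree exceeding $2n-3$ unless it is zero; hence $f_I(J)=0$, and reading \eqref{eqindmethbisym} at $p_J$ then forces $a_{I,J}=0$ (all other terms on the right already vanish at $p_J$ by Theorem \ref{thmlocgen}(2) since those $L$ do not satisfy $L\ge J$). The case $J=(i-3,-i+1)$ is symmetric, with the role of the two indices of $I$ interchanged and the relevant non-inclusion being $p_{(i-3,-i)}\notin\sigma_I$, obtained from the same Plücker-equation argument as in Lemma \ref{lemnotinclusions}.

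The main obstacle I anticipate is bookkeeping the right-hand side of \eqref{eqindmethbisym} correctly: because $I=(i,-i+1)$ sits exactly at the ``critical'' codimension $2n-3$ where the order $\geq_\in$ is genuinely coarser than $\geq$, the index set $I_{\geq_\in-1}$ contains points $J$ with $\codim(\sigma_J)=\codim(\sigma_I)$ as well as $\codim(\sigma_J)=\codim(\sigma_I)+1$, so the combination mixes the degree-one coefficients from Lemma \ref{lemChevnotconst} with the constant ones sought here; one must be careful that evaluating at a given $p_J$ only the terms $f_L$ with $L\ge J$ survive (Theorem \ref{thmlocgen}(2)), and that the degree-one coefficients already determined contribute to the equation for the constant coefficients. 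Once this is organized — effectively a small finite chart of the poset near $I$, drawn in Figure \ref{Figure-4} — each coefficient comes out of one linear equation exactly as in the earlier lemmas, with no new idea required; I would present it as four short itemized computations paralleling the bullet list in the proof of Lemma \ref{lemmaprova}.
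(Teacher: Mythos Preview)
Your treatment of $J=(i-1,-i-1)$ and $J=(i-2,-i)$ is essentially the paper's argument: evaluate \eqref{eqindmethbisym} at $p_J$, use $f_I(J)=0$ from Lemma \ref{lemnotinclusions}, and solve the resulting linear equation. One small correction: you justify the value of $f_{(i,-i-1)}((i-1,-i-1))$ by saying $\sigma_{(i,-i-1)}$ is ``smooth there'', but smoothness is only known at the \emph{own} fixed point $p_{(i,-i-1)}$. The right way (and what the paper does implicitly) is to apply the already-established Chevalley formula for $L=(i,-i-1)$, which is of type $(j-1,-j)$ and hence ``normal'', to get $f_L(J)=f_J(J)/(t_i-t_{i-1})$; the equation then reads $0=(t_{i-1}-t_i)\cdot\tfrac{f_J(J)}{t_i-t_{i-1}}+a_{I,J}f_J(J)$, whence $a_{I,J}=1$.

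For $J=(i,-i-2)$ and $J=(i-3,-i+1)$, however, your argument breaks down in two places. First, the claim that ``all other terms on the right already vanish at $p_J$'' is false: for $J=(i,-i-2)$ one has $(i,-i-1)\ge J$, so the degree-one term $(t_{i-1}-t_i)f_{(i,-i-1)}(J)$ survives on the right-hand side (and symmetrically $(i-2,-i+1)\ge(i-3,-i+1)$). Second, and consequently, the assertion $f_I(J)=0$ is itself false: computing $f_{(i,-i-1)}(J)=f_J(J)/(t_{i+2}-t_{i+1})$ via the normal Chevalley step and plugging into \eqref{eqindmethbisym} gives
\[
f_I(J)=\frac{(t_{i-1}-t_i)+a_{I,J}(t_{i+2}-t_{i+1})}{(t_{i+2}-t_{i-1})(t_{i+2}-t_{i+1})}\,f_J(J),
\]
which is nonzero when $a_{I,J}=0$. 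So no divisibility-and-degree count can force $f_I(J)$ to vanish. The paper instead combines this expression for $f_I(J)$ with the $T$-curve relation $(t_{i+2}-t_{i-1})\mid f_I(I)-f_I(J)$, obtaining a congruence modulo $(t_{i+2}-t_{i-1})$; the conclusion $a_{I,J}=0$ then comes from the fact that $f_I(I)$ contains the explicit factor $(t_{i+2}-t_i)$ among its tangent weights, which becomes $(t_{i-1}-t_i)$ modulo $(t_{i+2}-t_{i-1})$ and cancels the numerator. You need this extra step---the curve back to $p_I$ and the divisibility of $f_I(I)$---rather than an argument local to $p_J$.
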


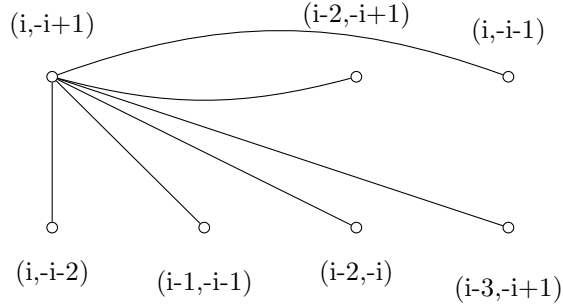
\begin{figure}
 \caption{Inclusions of fixed points inside $\sigma_I$ with $I=(i,-i+i)$, $i>0$}
  \label{Figure-4}
\begin{center}
 \begin{tikzpicture}
    \tikzstyle{every node}=[draw,circle,fill=white,minimum size=4pt,
                            inner sep=0pt]

      \draw (-2,2) node (1) [label=above: (i\comma -i+1)] {};
      \draw (2,2) node (2) [label=above: (i-2\comma -i+1)] {};
      \draw (4,2) node (3) [label=above: (i\comma -i-1)] {};
      \draw (-2,0) node (4) [label=below: (i\comma -i-2)] {};
      \draw (0,0) node (5) [label=below: (i-1\comma -i-1)] {};
      \draw (2,0) node (6) [label=below: (i-2\comma -i)] {};
      \draw (4,0) node (7) [label=below: (i-3\comma -i+1)] {};

    \draw (1) edge [out=-15, in=-165] (2);
    \draw (1) edge [out=20, in=160] (3);
    \draw (1) -- (4);
    \draw (1) -- (5);
    \draw (1) -- (6);
    \draw (1) -- (7);

\end{tikzpicture}
\end{center}
\end{figure}

\begin{proof}
The proof uses the same arguments of the proof of Lemma \ref{lemmaprova}, therefore we will be more concise. We need to deal with the coefficients for points $p_J\in \sigma_I$ such that $\codim(\sigma_I)=\codim(\sigma_J)-1$:
\begin{itemize}
\item Let $J=(i-1,-i-1)$. Lemma \ref{lemChevnotconst} and Equation \eqref{eqindmethbisym} give the relation
\[
f_I(J)(t_i-2t_{i-2}+t_{i-1})=\frac{t_{i-2}-t_{i-1}+a_{I,J}(t_{i-1}-t_{i-2})}{t_{i-1}-t_{i-2}}f_J(J),
\]
which implies that $a_{I,J}=1$ because by Lemma \ref{lemnotinclusions} $f_I(J)=0$.
\item Let $J=(i-2,-1)$. Lemma \ref{lemChevnotconst} and Equation \eqref{eqindmethbisym} give the relation
\[
f_I(J)(2t_i-t_{i-2}-t_{i-1})=\frac{t_{i-1}-t_{i}+a_{I,J}(t_{i}-t_{i-1})}{t_{i}-t_{i-1}}f_J(J),
\]
which implies that $a_{I,J}=1$ because by Lemma \ref{lemnotinclusions} $f_I(J)=0$.
\item Let $J=(i,-i-2)$. By using Lemma \ref{lemChevnotconst} and Equation \eqref{eqindmethbisym} repeatedly, and the existence of a $T$-equivariant curve between $p_I$ and $p_J$, we obtain the relation
\[
f_I(I)-\frac{t_{i-1}-t_{i}+a_{I,J}(t_{i+2}-t_{i+1})}{(t_{i+2}-t_{i-1})(t_{i+2}-t_{i+1})}f_J(J)\equiv 0 \mbox{ $\modulo$ }(t_{i+2}-t_{i-1}).
\]
As $f_I(I)$ is divisible by $(t_{i+2}-t_{i})$, we get that $a_{I,J}=0$.
\item Let $J=(i-3,-i+1)$. By using Lemma \ref{lemChevnotconst} and Equation \eqref{eqindmethbisym} repeatedly, and the existence of a $T$-equivariant curve between $p_I$ and $p_J$, we obtain the relation
\[
f_I(I)-\frac{t_{i-1}-t_{i}+a_{I,J}(t_{i-2}-t_{i-3})}{(t_{i}-t_{i-3})(t_{i-2}-t_{i-3})}f_J(J)\equiv 0 \mbox{ $\modulo$ }(t_{i}-t_{i-3}).
\]
As $f_I(I)$ is divisible by $(t_{i-1}-t_{i-3})$, we get that $a_{I,J}=0$.
\end{itemize}
\end{proof}

Putting all the lemmas together, we have proved:

\begin{theorem}[Chevalley formula]
\label{thmChevform}
The coefficients $a_{I,J}$ for $I=(i_1,i_2),J=(j_1,j_2)$ two admissible subsets in the Chevalley formula \eqref{eqindmethbisym} for the bisymplectic Grassmannian of planes $\I2Gr(2,V)$ are given by the following rules (the integer $i$ is always supposed to be $>0$):
\[
a_{(i,-i+1),(i,-i-1)}=a_{(i,-i+1),(i-2,-i+1)}=t_{i-1}-t_i;
\]
\[
a_{(i,-i+2),(i-3,-i+2)}=a_{(i,-i+2),(i,-i-1)}=a_{(i,-i+1),(i-1,-i-1)}=a_{(i,-i+1),(i-2,-i)}=1;
\]
\[
a_{(i,-i+2),(i-2,-i+1)}=a_{(i,-i+2),(i-1,-i)}=2;
\]
in all the other cases either $I\geq J$, $\#(I\cap J)=1$, $ \codim(\sigma_I)=\codim(\sigma_J)-1$ and $a_{I,J}=1$, or $a_{I,J}=0$.
\end{theorem}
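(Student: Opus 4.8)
The plan is to assemble the Chevalley formula Theorem~\ref{thmChevform} directly from the case-by-case lemmas already proved, so the "proof" is mostly a bookkeeping argument that verifies the list of coefficients is exhaustive. The starting point is Equation~\eqref{eqindmethbisym}: for a fixed admissible $I$, the polynomial $f_I(\cdot)(f_H(\cdot)-f_H(I))$ is supported on the points $p_J$ with $I\geq_\in J$ and $J\neq I$, and expands as $\sum_{J\in I_{\geq_\in-1}} a_{I,J} f_J(\cdot)$ with $\deg a_{I,J}=\codim(\sigma_J)-\codim(\sigma_I)-1$. So $a_{I,J}$ can only be nonzero when $\codim(\sigma_J)\geq\codim(\sigma_I)+1$, and it is a constant exactly when $\codim(\sigma_J)=\codim(\sigma_I)+1$. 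By Lemma~\ref{lemmacodimnullneg}, the only way to have $p_J\in\sigma_I$ with $\codim(\sigma_J)\leq\codim(\sigma_I)$ is $I=(i,-i+1)$, $\codim=2n-3$; combined with the degree formula this means a \emph{non-constant} $a_{I,J}$ (necessarily of degree one) can occur only for $I=(i,-i+1)$ and $J=(i,-i-1)$ or $J=(i-2,-i+1)$. Those two coefficients are computed in Lemma~\ref{lemChevnotconst} to be $t_{i-1}-t_i$, which gives the first displayed line.

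Next I would organize the constant coefficients by the codimension stratum of $I$. For $I$ not of the form $(i,-i+1)$, Lemma~\ref{lemmacodimnullneg} guarantees every $p_J\in\sigma_I$ has $\codim(\sigma_J)>\codim(\sigma_I)$, so all relevant $a_{I,J}$ are constants sitting in degree one shifts; Lemma~\ref{lemnormchev} handles the ``generic'' case $\#(I\cap J)=1$, $\codim(\sigma_I)=\codim(\sigma_J)-1$, yielding $a_{I,J}=1$, and shows all other $a_{I,J}=0$, \emph{except} where the local geometry of $\sigma_I$ near $\IGr$ differs. The finitely many deviations are exactly the strata near the middle: $\codim(\sigma_I)=2n-4$ (i.e.\ $I=(i-2,-i)$ or $I=(-1,-2)$), $\codim(\sigma_I)=2n-3$ (i.e.\ $I=(i,-i+1)$ or $I=(i-1,-i)$), and $\codim(\sigma_I)=2n-2$ (i.e.\ $I=(i,-i+2)$ or $I=(2,1)$). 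The lemma on $I=(i-1,-i)$ shows that stratum is still governed by the symplectic Chevalley rule ($a_{I,(i-2,-i)}=a_{I,(i-1,-i-1)}=1$, nothing else), Lemma~\ref{lemmaprova} treats $I=(i,-i+2)$ (two coefficients equal $2$, four equal $1$), and the final lemma treats the constant part for $I=(i,-i+1)$ ($a_{I,(i-1,-i-1)}=a_{I,(i-2,-i)}=1$, while $a_{I,(i,-i-2)}=a_{I,(i-3,-i+1)}=0$). I would then observe that the boundary cases $I=(-1,-2)$, $I=(2,1)$ are covered by Lemma~\ref{lemnormchev} itself, since for those $I$ the anomalous pairs of Lemmas~\ref{lemmacodimnullneg}--\ref{lemnotinclusions} do not arise.

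Finally I would run the exhaustiveness check: every pair $(I,J)$ of admissible $2$-subsets falls into exactly one of (a) $\codim(\sigma_J)\leq\codim(\sigma_I)$, where $a_{I,J}=0$ by the degree formula unless we are in the $\codim=2n-3$, $I=(i,-i+1)$ situation of Lemma~\ref{lemChevnotconst}; (b) $\codim(\sigma_J)=\codim(\sigma_I)+1$ with $\#(I\cap J)=1$ and $I\geq_\in J$, where Lemma~\ref{lemnormchev} (resp.\ the $I=(i-1,-i)$ and the final $I=(i,-i+1)$ lemmas for the exceptional strata, and Lemma~\ref{lemmaprova} for $I=(i,-i+2)$) gives $a_{I,J}\in\{0,1,2\}$; (c) $\codim(\sigma_J)\geq\codim(\sigma_I)+2$, where again the degree formula and the support condition $I\geq_\in J$ force $a_{I,J}=0$ for $\I2Gr(2,V)$ (this uses that for $k=2$ all the nontrivial jumps in codimension were enumerated in Lemmas~\ref{lemmacodimnullneg} and \ref{lemnotinclusions}). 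Collating the nonzero values from the individual lemmas yields precisely the three displayed lines plus the residual rule ``$a_{I,J}=1$ when $\#(I\cap J)=1$, $\codim(\sigma_I)=\codim(\sigma_J)-1$, else $0$'', which is the statement.

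The main obstacle is not any single computation but making the exhaustiveness argument airtight: one must be certain that the strata listed just before Lemma~\ref{lemmaprova} (codimensions $2n-4,2n-3,2n-2$) are the \emph{only} places where $\I2Gr(2,V)$ departs from the symplectic Chevalley rule, and that within each such stratum the figures (Figures~\ref{Figure-1}--\ref{Figure-4}) genuinely list \emph{all} $p_J$ with $I\geq_\in J$ and $\codim(\sigma_J)=\codim(\sigma_I)+1$. This is where Lemma~\ref{lemmacodimnullneg} (no codimension drops except the one special case) and Lemma~\ref{lemnotinclusions} (certain symplectic inclusions fail) do the real work, and the write-up should make explicit that together they close off every potential missing term.
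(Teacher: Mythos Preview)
Your proposal is correct and follows exactly the paper's approach: the paper's own proof is the single sentence ``Putting all the lemmas together, we have proved:'', and you have spelled out in detail what that assembly entails. One small clarification on your case (c): for $\codim(\sigma_J)\geq\codim(\sigma_I)+2$ there is nothing to argue, since the index set $I_{\geq_\in-1}$ in \eqref{eqindmethbisym} already restricts to $\codim(\sigma_J)\leq\codim(\sigma_I)+1$ by definition, so such $a_{I,J}$ simply do not appear.
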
  

Thus, we obtain:

\begin{corollary}
\label{thm1coho22n}
Equation \eqref{eqindmethbisym} and Theorem \ref{thmChevform} determine inductively the equivariant classes of all the Schubert varieties inside $\I2Gr(2,V)$.
\end{corollary}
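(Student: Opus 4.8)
The plan is to deduce the corollary from an explicit downward induction on the codimension $c=\codim(\sigma_I)$, running from $c=\dim\I2Gr(2,V)=4n-6$ down to $c=0$ and producing, at each stage, the whole tuple $f_I=(f_I(J))_J$ out of the coefficients $a_{I,J}$ of Theorem~\ref{thmChevform}, the explicit hyperplane class $f_H$, and the Schubert classes of larger codimension already computed. Two ingredients are available for every $I$ with no extra argument: by Theorem~\ref{thmlocgen}(2) one has $f_I(J)=0$ whenever $p_J\notin\sigma_I$, so (since $p_J\in\sigma_I$ forces $J\leq_{\in}I$) the class $f_I$ can be non-zero only at $p_I$ and at the points $p_J$ with $J<_{\in}I$; and by Theorem~\ref{thmlocgen}(3), $\sigma_I$ being smooth at $p_I$, the value $f_I(I)$ is the product of the $\tau$-positive $T$-weights of $T_I$, read off from Lemma~\ref{lemweightsbisym}. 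The base case $c=4n-6$ is then immediate: the unique Schubert variety of that codimension is a single point, so $f_I$ is supported at $p_I$ and is already pinned down.

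For the inductive step fix $c<4n-6$, assume every $f_J$ with $\codim(\sigma_J)>c$ is known, and take $\codim(\sigma_I)=c$. Inspecting Theorem~\ref{thmChevform} together with Lemma~\ref{lemmacodimnullneg}, the only situation in which some $f_J$ of codimension $\leq c$ occurs on the right-hand side of \eqref{eqindmethbisym} is $I=(i,-i+1)$ with $J\in\{(i,-i-1),\,(i-2,-i+1)\}$. So suppose first that $I$ is \emph{not} of the form $(i,-i+1)$: then every $J$ appearing with $a_{I,J}\neq 0$ in \eqref{eqindmethbisym} has $\codim(\sigma_J)=c+1$, hence $f_J$ is known by the inductive hypothesis, and $a_{I,J}$ is given by Theorem~\ref{thmChevform}; thus the right-hand side $g:=\sum_J a_{I,J}f_J$ of \eqref{eqindmethbisym} is a known element of $\CC[\Xi(T)]^{\oplus r}$. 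Now evaluate \eqref{eqindmethbisym} at each fixed point $p_K$: at $K=I$ it only records $0=g(I)$, but $f_I(I)$ is already known; at $p_K\notin\sigma_I$ one has $f_I(K)=0$; and at $p_K\in\sigma_I$ with $K\neq I$ the explicit form of $f_H$ gives $f_H(K)-f_H(I)=\sum_{l\in I}\epsilon_l-\sum_{l\in K}\epsilon_l$, a non-zero linear form since distinct admissible subsets have distinct image under $L\mapsto\sum_{l\in L}\epsilon_l$, so that $f_I(K)=g(K)/(f_H(K)-f_H(I))$ — the division being exact because the honest class $f_I$ satisfies \eqref{eqindmethbisym}. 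Hence $f_I$ is determined.

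The remaining, and genuinely delicate, case is $I=(i,-i+1)$, which I expect to be the main obstacle: here the coefficients $a_{I,(i,-i-1)}=a_{I,(i-2,-i+1)}=t_{i-1}-t_i$ are non-constant and the corresponding classes $f_{(i,-i-1)}$, $f_{(i-2,-i+1)}$ have the \emph{same} codimension $2n-3$ as $f_I$, so a naive induction on $c$ does not close at the middle degree. The way around this is that $(i,-i-1)$ and $(i-2,-i+1)$ are of the form $(b-1,-b)$, not of the form $(i,-i+1)$, and hence are covered by the non-exceptional argument above; one should therefore organize the stage $c=2n-3$ so that all subsets of type $(b-1,-b)$ are treated first and the subsets of type $(i,-i+1)$ afterwards. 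With $f_{(i,-i-1)}$ and $f_{(i-2,-i+1)}$ then in hand, the right-hand side of \eqref{eqindmethbisym} is again a known tuple and $f_I$ is recovered exactly as in the previous paragraph. Carrying this bookkeeping out for every $c$ down to $0$ yields all Schubert classes; the only data it rests on, namely which Chevalley terms raise the codimension and which preserve it, is entirely supplied by Lemma~\ref{lemmacodimnullneg}, Lemma~\ref{lemnotinclusions} and Theorem~\ref{thmChevform}.
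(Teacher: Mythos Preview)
Your proof is correct and follows the approach the paper intends: the paper does not spell out a proof of the corollary, but the downward induction on codimension, together with the pointwise evaluation of \eqref{eqindmethbisym} and the explicit hyperplane class $f_H$, is exactly the ``inductive method'' sketched in Section~\ref{seceqcohomsymeasy} for $\IGr(k,V)$ and adapted here. Your explicit handling of the middle codimension $2n-3$---treating the subsets $(b-1,-b)$ before the subsets $(i,-i+1)$ so that the non-constant terms $a_{(i,-i+1),(i,-i-1)}f_{(i,-i-1)}$ and $a_{(i,-i+1),(i-2,-i+1)}f_{(i-2,-i+1)}$ are already available---makes precise a point the paper leaves implicit in the statement of Theorem~\ref{thmChevform}.
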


\begin{remark}[$\I2Gr(2,8)$]
Let us point out that the constant coefficients $a_{I,J}$ computed in Theorem \ref{thmChevform} give the Chevalley formula for the classical cohomology (by Theorem \ref{thmeqclasscohomgen}), and therefore allow to compute the degrees of Shubert varieties. In Figure \ref{Figure2,8} we reported the degrees of Schubert varieties inside $\I2Gr(2,8)$ (the case of $\I2Gr(2,6)$ will be dealt with in the next section). As it was expected classically, we find that the degree of $\I2Gr(2,8)$ is equal to $\deg(\Gr(2,8))=132$, and this is an evidence of the fact that our formula is correct.
\end{remark}

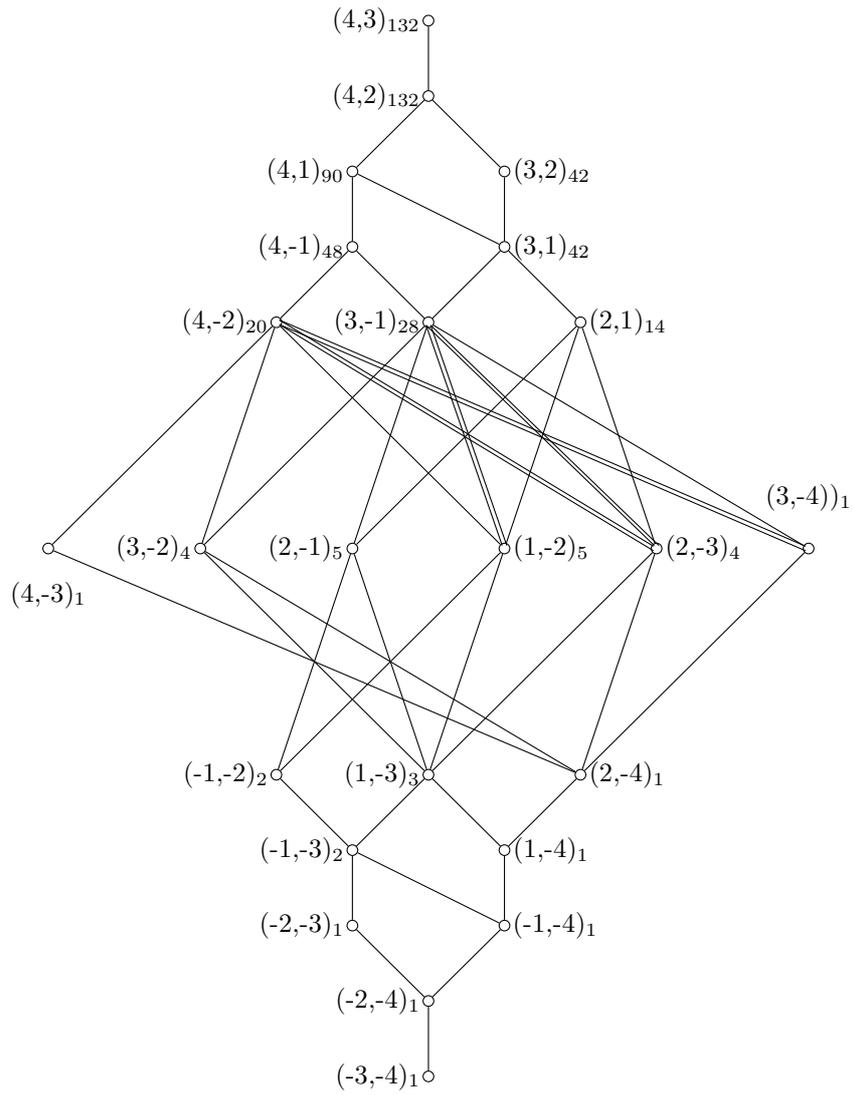
\begin{figure}
 \caption{Degree of Schubert varieties (notation $I_{\deg(\sigma_I)}$) inside $\I2Gr(2,8)$}
  \label{Figure2,8}
\begin{center}
 \begin{tikzpicture}
    \tikzstyle{every node}=[draw,circle,fill=white,minimum size=4pt,
                            inner sep=0pt]

      \draw (0,7) node (43) [label=left: (4\comma 3$)_{132}$] {};
      \draw (0,6) node (42) [label=left: (4\comma 2$)_{132}$] {};
      \draw (-1,5) node (41) [label=left: (4\comma 1$)_{90}$] {};
      \draw (1,5) node (32) [label=right: (3\comma 2$)_{42}$] {};
      \draw (-1,4) node (4-1) [label=left: (4\comma -1$)_{48}$] {};
      \draw (1,4) node (31) [label=right: (3\comma 1$)_{42}$] {};
      \draw (-2,3) node (4-2) [label=left: (4\comma -2$)_{20}$] {};
      \draw (0,3) node (3-1) [label=left: (3\comma -1$)_{28}$] {};
      \draw (2,3) node (21) [label=right: (2\comma 1$)_{14}$] {};
      
      \draw (-5,0) node (4-3) [label=below: (4\comma -3$)_1$] {};
      \draw (-3,0) node (3-2) [label=left: (3\comma -2$)_4$] {};
      \draw (-1,0) node (2-1) [label=left: (2\comma -1$)_5$] {};
      \draw (1,0) node (1-2) [label=right: (1\comma -2$)_5$] {};
      \draw (3,0) node (2-3) [label=right: (2\comma -3$)_4$] {};
      \draw (5,0) node (3-4) [label=above: (3\comma -4)$)_1$] {};
      
      \draw (0,-7) node (-3-4) [label=left: (-3\comma -4$)_1$] {};
      \draw (0,-6) node (-2-4) [label=left: (-2\comma -4$)_1$] {};
      \draw (-1,-5) node (-2-3) [label=left: (-2\comma -3$)_1$] {};
      \draw (1,-5) node (-1-4) [label=right: (-1\comma -4$)_1$] {};
      \draw (-1,-4) node (-1-3) [label=left: (-1\comma -3$)_2$] {};
      \draw (1,-4) node (1-4) [label=right: (1\comma -4$)_1$] {};
      \draw (-2,-3) node (-1-2) [label=left: (-1\comma -2$)_2$] {};
      \draw (0,-3) node (1-3) [label=left: (1\comma -3$)_3$] {};
      \draw (2,-3) node (2-4) [label=right: (2\comma -4$)_1$] {};

      \draw (43) -- (42);
      \draw (42) -- (41);
      \draw (42) -- (32);
      \draw (41) -- (4-1);
      \draw (41) -- (31);
      \draw (32) -- (31);
      \draw (4-1) -- (4-2);
      \draw (4-1) -- (3-1);
      \draw (31) -- (3-1);
      \draw (31) -- (21);
      
      \draw (-3-4) -- (-2-4);
      \draw (-2-4) -- (-1-4);
      \draw (-2-4) -- (-2-3);
      \draw (-1-4) -- (1-4);
      \draw (-1-4) -- (-1-3);
      \draw (-2-3) -- (-1-3);
      \draw (1-4) -- (2-4);
      \draw (1-4) -- (1-3);
      \draw (-1-3) -- (1-3);
      \draw (-1-3) -- (-1-2);
      
      \draw (4-2) -- (4-3);
      \draw (4-2) -- (3-2);
      \draw (4-2) -- (1-2);   
      \draw (3-1) -- (3-2);
      \draw (3-1) -- (2-1);
      \draw (3-1) -- (3-4);
      \draw (21) -- (2-1);
      \draw (21) -- (1-2);
      \draw (21) -- (2-3);
      
      \draw (2-1) -- (-1-2);
      \draw (1-2) -- (-1-2);
      \draw (3-2) -- (1-3);
      \draw (2-1) -- (1-3);
      \draw (1-2) -- (1-3);
      \draw (2-3) -- (1-3);
      \draw (4-3) -- (2-4);
      \draw (3-2) -- (2-4);
      \draw (2-3) -- (2-4);
      \draw (3-4) -- (2-4);
      
    \draw (-1.96,3.03) -- (2.96,0.03);
    \draw (-1.96,2.97) -- (2.96,-0.03);
    \draw (-1.96,3.03) -- (4.96,0.03);
    \draw (-1.96,2.97) -- (4.96,-0.03);
    
    \draw (-0.03,2.96) -- (2.97,0.04);
    \draw (0.03,2.96) -- (3.03,0.04);
    \draw (-0.03,2.96) -- (0.97,0.04);
    \draw (0.03,2.96) -- (1.03,0.04);


\end{tikzpicture}
\end{center}
\end{figure}

\subsection{A quasi-homogeneous example}
\label{i2gr26}

As an application of the previous general results, in this section we study in detail the smallest non-trivial bisymplectic Grassmannian of planes, i.e. $\I2Gr(2,V)$ with $V\cong \CC^6$. This variety is interesting not only because computations are still feasible by hand, but because it is a quasi-homogeneous variety, i.e. it admits an action of a group with a dense orbit. Moreover, it has no small deformations, and it admits only a finite number of flat deformations. In the following we study its decomposition in orbits and its flat deformations. Then, we will give a presentation of its (classical) cohomology ring.

The variety $\I2Gr(2,V)$ with $V\cong \CC^6$ admits an action of 
\[
G=\SL(2)^3\cong \SL(K_1)\times \SL(K_2)\times \SL(K_3),
\]
where the $2$-dimensional planes $K_1,K_2,K_3$ have been defined in Section \ref{secsmooth}. We will denote a vector inside $K_i$ by the subscript $i$ (e.g. $v_i,v'_i$, etc.). The list of $G$-orbits inside $\I2Gr(2,V)$ with their representatives is the following one:
\begin{itemize}
\item A representative of the dense orbit is $[P]=(v_1+v_2+v_3)\wedge(v'_1+v'_2+v'_3)$. This orbit is isomorphic to the quotient $\SL(2)^3 / \SL(2)$, where the quotient factor $\SL(2)$ is the image of the diagonal morphism $\SL(2)\to SL(2)^3$. Being the quotient of two reductive groups, the dense orbit is an affine variety. Indeed, as the Plucker coordinate $q_{(1,-1)}([P])\neq 0$, all the points $[Q]$ of the orbit satisfy $q_{(1,-1)}([Q])\neq 0$. Therefore the orbit is contained inside the affine variety $\{q_{(1,-1)}\neq 0\}\subset \I2Gr(2,V)$; in fact the dense orbit is equal to $\{q_{(1,-1)}\neq 0\}$ (or equivalently $q_{(2,-2)}\neq 0$ or $q_{(3,-3)}\neq 0$).
\item There is one orbit with representatives of type $(v_i+v_j)\wedge (v_j+v_k)$ (or, which is the same, $(v_i+2v_j+v_k)\wedge (v_i+v_j)$). Let $\cU_i$ be the tautological bundle over $\PP(K_i)$. Then this orbit is isomorphic to the total space of
\[
(\PP(\cU_i\oplus \cU_j)\setminus (\PP(\cU_i)\cup \PP(\cU_j)))\times (\PP(\cU_k\oplus \cU_j)\setminus (\PP(\cU_k)\cup \PP(\cU_j)))
\]
\[
\mbox{ over }\PP(K_1)\times \PP(K_2)\times\PP(K_3).
\]
Its closure is the irreducible divisor that compactifies the dense orbit.
\item There are three orbits with representatives of type $v_i\wedge (v_j+v_k)$, each one isomorphic to 
\[
\PP(K_i)\times(\PP(K_j\oplus K_k)\setminus (\PP(K_j)\cup \PP(K_k))).
\]
\item There are three minimal orbits with representatives of type $v_i\wedge v_j$, each one isomorphic to 
\[
\PP(K_i)\times \PP(K_j).
\]
\end{itemize}

\subsubsection{The Hilbert scheme of $\I2Gr(2,6)$}

We have already seen that $\I2Gr(2,6)$ has no small deformations (Theorem \ref{thmsmalldefbisym}), and that there is only one smooth isomorphism class (see Remark \ref{thmsmalldefbisym}). This is related to the fact that if $V\cong \CC^6$, then $ (\wedge^2 V^*)\otimes \CC^2$ is a prehomogeneous space for the action of $\SL(V)\times \SL(2)\times \CC^*$ (see \cite{WeymanE7}). This implies that there are just a finite number of orbits, and therefore that all pencils $\Omega$ in a dense subset of $\PP(\wedge^2 V^*)$ are conjugated under the action of $\PGL(V)$. As a consequence, there are only finitely many isomorphism classes of varieties of the form $\zero(\Omega)$. In the following we intend to describe these varieties.

We will consider $\I2Gr(2,V)$ with $V\cong \CC^6$ as a subvariety $\I2Gr(2,V)\subset\Gr(2,V)\subset \PP(\wedge^2 V^*)$, and we will denote by $p(t)$ the Hilbert polynomial 
\[
p(t)=\chi(\I2Gr(2,V), \cO(t)) =\HHH^0(\I2Gr(2,V), \cO(t)) \mbox{ for }t>>0.
\]

\begin{proposition}
\label{HilbI2Gr26}
There are $11$ flat deformations (included the smooth one) of $\I2Gr(2,V)$ inside $\Gr(2,V)$. They correspond to the orbits of $\SL(V)$ inside $\Gr(2,\wedge^2 V^*)$, which can be identified as a smooth component of the Hilbert scheme of $\I2Gr(2,V)\subset \Gr(2,V)$.
\end{proposition}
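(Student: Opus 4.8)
The plan is to realise the space of pencils $\Gr(2,\wedge^2 V^*)$ as a connected component of the relevant Hilbert scheme, to show this component is smooth by a normal-bundle computation, and then to reduce the enumeration of flat deformations to the orbit stratification of the prehomogeneous vector space $\wedge^2 V^*\otimes\CC^2$ recalled before the statement.

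First I would build the universal family. Over $\Gr(2,\wedge^2 V^*)$ let $\cS\subset\wedge^2 V^*\otimes\cO$ be the tautological rank-$2$ subbundle; on $B:=\Gr(2,\wedge^2 V^*)\times\Gr(2,V)$ the composite $\cS\boxtimes\cO\hookrightarrow(\wedge^2 V^*\otimes\cO)\boxtimes\cO\to\cO\boxtimes\cO_{\Gr(2,V)}(1)$ is a section of $\cS^\vee\boxtimes\cO_{\Gr(2,V)}(1)$ whose zero scheme $\mathcal{X}\subset B$ has fibre $\zero(\Omega)=\I2Gr(2,V)$ over $[\Omega]$. For $V\cong\CC^6$ each $\zero(\Omega)$ is the intersection of $\Gr(2,V)\subset\PP(\wedge^2 V)$ with the codimension-$2$ linear space $\PP(\mathrm{Ann}(\Omega))$; since $\Gr(2,V)$ has Picard group $\ZZ$ and its hyperplane sections are exactly the linear system $|\cO(1)|$, no component of a codimension-$2$ linear section can have dimension $>6$, and being non-empty of dimension $\ge 6$ such a section has dimension exactly $6$ (in particular the dimension hypothesis of Proposition~\ref{propsmooth} is automatic when $k=2$, $n=3$). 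As no pencil is zero, $\mathcal{X}$ meets each fibre $\{[\Omega]\}\times\Gr(2,V)$ properly, hence in dimension exactly $6$; therefore $\mathcal{X}$ is a codimension-$2$ locally complete intersection in the smooth variety $B$, so Cohen--Macaulay, and the local criterion of flatness gives that $\mathcal{X}\to\Gr(2,\wedge^2 V^*)$ is flat. This yields a classifying morphism $\phi\colon\Gr(2,\wedge^2 V^*)\to\mathrm{Hilb}^{p(t)}(\Gr(2,V))$, $[\Omega]\mapsto[\zero(\Omega)]$.

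Next I would show $\phi$ is an isomorphism onto a smooth component. Injectivity is immediate: $\zero(\Omega)$ spans $\PP(\mathrm{Ann}(\Omega))$, so $\HHH^0(\cI_{\zero(\Omega)/\Gr(2,V)}(1))=\Omega$ recovers the pencil. For smoothness along the image, note $N_{\zero(\Omega)/\Gr(2,V)}\cong\cO_{\zero(\Omega)}(1)^{\oplus2}$ (the defining section being regular); twisting the Koszul resolution of $\cO_{\zero(\Omega)}$ by $\cO(1)$ and using $\HHH^\bullet(\Gr(2,V),\cO(-1))=0$ and $\HHH^{>0}(\Gr(2,V),\cO(1))=0$ gives $\HHH^0(\cO_{\zero(\Omega)}(1))=\CC^{13}$ and $\HHH^{>0}(\cO_{\zero(\Omega)}(1))=0$, hence $\HHH^0(N_{\zero(\Omega)/\Gr(2,V)})=\CC^{26}$ and $\HHH^1=0$. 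So the Hilbert scheme is smooth of dimension $26=\dim\Gr(2,\wedge^2 V^*)$ at every point of $\mathrm{im}\,\phi$; as $\phi$ is proper with irreducible $26$-dimensional image sitting inside the smooth $26$-dimensional locus, this image is an entire component $\mathcal{C}$, and a proper injective (hence finite), degree-one morphism onto the normal variety $\mathcal{C}$ is an isomorphism $\Gr(2,\wedge^2 V^*)\xrightarrow{\sim}\mathcal{C}$, compatible with the $\SL(V)$-action (which factors through $\Aut(\Gr(2,V))=\PGL(V)$). Consequently the isomorphism classes of subschemes $\zero(\Omega)\subset\Gr(2,V)$, i.e.\ the flat deformations of $\I2Gr(2,V)$ inside $\Gr(2,V)$, are exactly the $\SL(V)$-orbits on $\Gr(2,\wedge^2 V^*)$.

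Finally I would count the orbits. Scalar matrices in $\GL(V)$ act trivially on $\Gr(2,\wedge^2 V^*)$, so $\SL(V)$-orbits and $\GL(V)$-orbits there coincide, and these are in bijection with the $\GL(V)\times\GL(2)$-orbits of rank-$2$ tensors in $\wedge^2 V^*\otimes\CC^2$: such a tensor, read as a map $\CC^2\to\wedge^2 V^*$, has a pencil as its image, and the $\GL(2)$-action just changes the basis of that pencil. By \cite{WeymanE7} the space $\wedge^2 V^*\otimes\CC^2$ is prehomogeneous under $\GL(V)\times\GL(2)$, hence has finitely many orbits; the rank-$2$ locus is dense open and a union of some of them, the others consisting of tensors of rank $\le 1$ (genuinely a single form, stratified by its rank $0,2,4,6$). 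Going through the classification one finds exactly $11$ orbits in the rank-$2$ locus, which one then realises by $11$ explicit representative pencils $\Omega$ --- from the smooth one down to the most degenerate --- together with the corresponding varieties $\zero(\Omega)$; this is the description announced before the statement, and it yields the $11$ flat deformations. I expect this last step to be the main obstacle: the formal parts (flatness of $\mathcal{X}$, the normal-bundle vanishing, and the identification of $\phi$ with a component) are routine once the Koszul resolution of $\zero(\Omega)$ is available, whereas pinning down that the number of relevant orbits is \emph{exactly} eleven and writing down their representatives genuinely requires the fine geometry of the $E_7$-type prehomogeneous space $\wedge^2\CC^6\otimes\CC^2$, equivalently the normal forms of pencils of alternating forms on $\CC^6$.
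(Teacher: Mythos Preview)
Your proposal is correct and follows essentially the same strategy as the paper: build the family over $\Gr(2,\wedge^2 V^*)$, check each fibre has the expected dimension so that the family is flat, identify $\Gr(2,\wedge^2 V^*)$ with a smooth component of the Hilbert scheme via the normal bundle $N\cong\cO(1)^{\oplus2}$ and an injectivity argument, and finally read off the eleven orbits from the $E_7$-prehomogeneous classification in \cite{WeymanE7}.

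The one noteworthy difference is how the dimension check is carried out. The paper proceeds orbit by orbit: for each of the eleven pencil types it exhibits a specific form in the pencil (non-degenerate, or of rank $4$, or of rank $2$) whose zero locus in $\Gr(2,V)$ is an irreducible $7$-fold, forcing $\zero(\Omega)$ to be a genuine hypersurface in it. You instead give a uniform argument via $\Pic(\Gr(2,V))=\ZZ$: any hyperplane section, having primitive class $[\cO(1)]$, is a prime divisor and hence irreducible of dimension $7$; since distinct hyperplanes cut out distinct such divisors (linear normality of the Pl\"ucker embedding), the second form cannot vanish identically on the first zero locus, and the intersection has dimension exactly $6$. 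Your route is cleaner and avoids the case analysis; the paper's route has the advantage of making the eleven degenerations more concrete. Your explicit Koszul computation of $\HHH^1(N)=0$ is a small bonus: the paper deduces \'etaleness directly from the tangent-space isomorphism together with smoothness of the source, which is legitimate but leaves the obstruction vanishing implicit.
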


\begin{proof}
Let us consider a pencil $\Omega$. In order to have that $\zero(\Omega)$ is a flat deformation of the (smooth) bisymplectic Grassmannian, we only need to verify that it has the expected codimension (equal to $6$). Indeed, in that case, we can compute $p(t)=\chi(\I2Gr(2,V), \cO(t))$ by using the Koszul complex as
\[
p(t)=\chi(Gr(2,V), \cO(t))-\chi(\Gr(2,V), 2\cO(t-1))+\chi(\Gr(2,V), \cO(t-2)),
\] 
obtaining that the Hilbert polynomial does not depend on the particular choice of $\Omega$. 

By \cite{WeymanE7}[Case $E_7,\alpha_3$], there are $15$ orbits of $\SL(V)\times \SL(2)\times \CC^*$ inside $(\wedge^2 V^*)\otimes \CC^2$. Four of them are generated by one form, therefore the corresponding zero locus $\zero(\Omega)$ has dimension $\geq 7$ and cannot be a flat deformation of the (smooth) bisymplectic Grassmannian. The orbits of actual pencils $\Omega$ have been reported in Figure \ref{Figure0}. Among them:
\begin{itemize}
\item the pencils inside $O_{0},O_{1},O_{2},O_{5_I},O_{6}$ contain a non-degenerate form, therefore $\zero(\Omega)$ is a hypersurface in the irreducible variety $\IGr(2,V)$ and has dimension equal to $6$;
\item the pencils inside $O_{7},O_{10},O_{11},O_{15}$ contain a form of type $x_1\wedge x_{-1}$, whose zero locus defines a (irreducible) Schubert variety inside $\Gr(2,V)$. Therefore $\zero(\Omega)$ is again $6$-dimensional;
\item the pencils inside $O_{4},O_{5_{II}}$ contain a form of type $x_1\wedge x_{-1}+x_{2}\wedge x_{-2}$, which is singular only at one point and irreducible as well. Therefore once more $\zero(\Omega)$ is $6$-dimensional.
\end{itemize}

We have thus shown that the family $\{(\zero(\Omega),\Omega)\subset \Gr(2,V)\times \Gr(2,\wedge^2 V^*)\}$ is flat over $\Gr(2,\wedge^2 V^*)$, and this gives a morphism $\psi$ from $\Gr(2,\wedge^2 V^*)$ to the Hilbert scheme of $\I2Gr(2,V)\subset \Gr(2,V)$. Moreover, this Hilbert scheme has tangent space at $\psi(\Omega)=\zero(\Omega)$ equal to
\[
\HHH^0(\zero(\Omega), \mathcal{N}_{\zero(\Omega), \Gr(2,V)})=\HHH^0(\zero(\Omega), 2\cO(1))\cong T_{\Gr(2,\wedge^2 V^*),\Omega},
\]
and the differential of the morphism $\psi$ is an isomorphism at each point (notice that the chain of isomorphisms does not depend on the fact that $\zero(\Omega)$ is smooth). We get that $\psi$ is \'etale; moreover, it is injective because $\Omega$ can be recovered as the codimension two linear space inside $\wedge^2 V$ generated by the linear system $|\cO(1)|$ over $\zero(\Omega)$. Therefore $\Gr(2,\wedge^2 V^*)$ is exactly one irreducible component of the Hilbert scheme.
\end{proof}

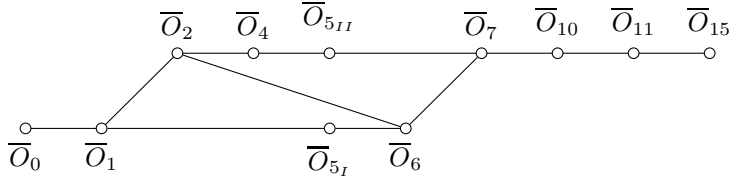
\begin{figure}
 \caption{Orbit closures of non-degenerate pencils of $2$-forms with respective codimensions as labels}
  \label{Figure0}
\begin{center}
 \begin{tikzpicture}
    \tikzstyle{every node}=[draw,circle,fill=white,minimum size=4pt,
                            inner sep=0pt]

    \draw (-5,0) node (14) [label=below:$\overline{O}_{0}$] {}
       -- ++(0:1cm) node (13) [label=below:$\overline{O}_{1}$] {}
        -- ++(45:1.41cm) node (11) [label=above:$\overline{O}_{2}$] {}
         -- ++(0:1cm) node (10) [label=above:$\overline{O}_{4}$] {}
        -- ++(0:1cm) node (8) [label=above:$\overline{O}_{5_{II}}$] {}
        -- ++(0:2cm) node (7) [label=above:$\overline{O}_{7}$] {}
        -- ++(0:1cm) node (6) [label=above:$\overline{O}_{10}$] {}
        -- ++(0:1cm) node (5) [label=above:$\overline{O}_{11}$] {}
        -- ++(0:1cm) node (3) [label=above:$\overline{O}_{15}$] {};

    \draw (13) -- ++(0:3cm) node (12) [label=below:$\overline{O}_{5_I}$] {};
   \draw (12) -- ++(0:1cm) node (9) [label=below:$\overline{O}_{6}$] {};
    \draw (11) -- (9);
    \draw (9) -- (7);

\end{tikzpicture}
\end{center}
\end{figure}

\subsubsection{Presentation of the cohomology for $\I2Gr(2,6)$}

In this last section, we compute explicitly the (equivariant) cohomology of $\I2Gr(2,V)$ for $V\cong \CC^6$. We give a presentation of the cohomology ring and we discuss some related questions, such as the existence of a certain symmetry or of a self-dual basis. We begin with an application of the Chevalley formula for bisymplectic Grassmannians of planes:

\begin{proposition}
\label{coeffaibisym}
The coefficients $a_{I,J}$ that appear in Equation \eqref{eqindmethbisym} for $\I2Gr(2,6)$ are uniquely determined by the relations in Theorem \ref{thmlocgen}. They are reported in Figure \ref{Figure3}.
\end{proposition}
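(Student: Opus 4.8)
The plan is to reduce the statement to two results already established: the uniqueness theorem for equivariant Schubert classes (Theorem \ref{thmuniShckeqtwo}) and the general Chevalley formula (Theorem \ref{thmChevform}), specialized to $n=3$.

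\emph{Uniqueness of the $a_{I,J}$.} By Theorem \ref{thmuniShckeqtwo} the equivariant Schubert classes $f_I\in\HHH^*_T(\I2Gr(2,6))$ are already singled out among all elements of $\CC[\Xi(T)]^{\oplus r}$ by properties $1,2,3$ of Theorem \ref{thmlocgen}; in particular $f_H$ is determined (it is the explicit degree-one polynomial $f_H(I)=-\sum_{i\in I}\epsilon_i+\sum_{i=1}^2\epsilon_{n-i+1}$ computed above). Since $\{f_I : I\text{ admissible}\}$ is a $\CC[\Xi(T)]$-basis of $\HHH^*_T(\I2Gr(2,6))$, the left-hand side $f_I(\cdot)(f_H(\cdot)-f_H(I))$ of \eqref{eqindmethbisym} is a determined element, and its expansion in that basis is unique; hence every $a_{I,J}$ is uniquely pinned down by the relations of Theorem \ref{thmlocgen}, which is the first assertion.

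\emph{The explicit values.} Here I would simply run Theorem \ref{thmChevform} with $n=3$. Concretely: enumerate the $2^2\binom{3}{2}=12$ admissible subsets $I\subset\{\pm1,\pm2,\pm3\}$ with $|I|=2$, compute $\codim(\sigma_I)$ from the codimension formula of Section \ref{secBBdecomposition} (recovering the distribution recorded by $S_{2,3}=(1,1,2,4,2,1,1)$, with middle codimension $2n-3=3$), and arrange them in the Hasse diagram of the order $\geq_\in$. Then the coefficients are read off: for every pair with $p_J\in\sigma_I$, $\#(I\cap J)=1$ and $\codim(\sigma_I)=\codim(\sigma_J)-1$ one has $a_{I,J}=1$, all other $a_{I,J}$ vanish, \emph{except} the degree-one coefficients attached to the middle-codimension Schubert varieties $\sigma_I$ with $I$ of the form $(i,-i+1)$. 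The only such $I$ lying in range are $(2,-1)$ (case $i=2$) and $(3,-2)$ (case $i=3$); many of the "generic" special subsets appearing in Theorem \ref{thmChevform} — those built from $i-3$, $i-2$, $-i-1$, $-i-2$ — fall outside $\{\pm1,\pm2,\pm3\}$ when $n=3$, so most exceptional clauses are vacuous and only a short list of non-constant $a_{I,J}$ (of the shape $t_{i-1}-t_i$) survives, together with a couple of coefficients equal to $2$. Assembling this yields Figure \ref{Figure3}.

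The main obstacle is the low-dimensional bookkeeping around the middle codimension: one must decide carefully, in this narrow index range, which inclusions $p_J\in\sigma_I$ genuinely hold and which do not — this is exactly where Lemmas \ref{lemmacodimnullneg} and \ref{lemnotinclusions} do the work — since those inclusions govern whether a given $a_{I,J}$ is $0$, $1$, $2$, or a degree-one polynomial. A useful final consistency check: specializing $\epsilon_i\mapsto 0$ (Theorem \ref{thmeqclasscohomgen}) must give a classical Chevalley formula whose iterated application is compatible with the Betti numbers $S_{2,3}=(1,1,2,4,2,1,1)$ and with $\deg\I2Gr(2,6)=\deg\Gr(2,6)=14$; matching this is strong evidence that the table in Figure \ref{Figure3} is correct.
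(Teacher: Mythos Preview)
Your proposal is correct and matches the paper's approach: the proposition is stated in the paper as a direct application of Theorem \ref{thmChevform} specialized to $n=3$, with uniqueness inherited from Theorem \ref{thmuniShckeqtwo}, and no separate proof is given. Your additional remarks on which exceptional clauses of Theorem \ref{thmChevform} become vacuous when $n=3$ and on the degree-$14$ consistency check are exactly the verifications the paper alludes to in the surrounding text.
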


Thus, by Corollary \ref{thm1coho22n}, we know that it is possible to determine inductively the equivariant classes of all the Schubert varieties inside $\I2Gr(2,6)$.

\begin{remark}
The constant coefficients $a_{I,J}$ determine the multiplication of a Schubert variety with the hyperplane section in the ordinary cohomology, i.e. a Pieri type formula for $\I2Gr(2,6)$. In particular, our computations are coherent with the fact that the degree of $\I2Gr(2,6)$ is $14$, as we know because it is the degree of $\Gr(2,6)$.
\end{remark}

\begin{figure}
 \caption{Coefficients $a_{I,J}$ in $\I2Gr(2,6)$}
  \label{Figure3}
\begin{center}
 \begin{tikzpicture}
    \tikzstyle{every node}=[draw,circle,fill=white,minimum size=4pt,
                            inner sep=0pt]

    \draw (0,0) node (32) [label=left:$(3\mbox{,}2)$] {}
        -- ++(270:1.5cm) node (31) [label=left:$(3\mbox{,}1)$] {}
        -- ++(225:2cm) node (3-1) [label=left:$(3\mbox{,}-1)$] {}
        -- ++(210:3cm) node (3-2) [label=left:$(3\mbox{,}-2)$] {}
         ++(330:3cm) node (-1-2) [label=left:$(-1\mbox{,}-2)$] {}
        -- ++(315:2cm) node (-1-3) [label=left:$(-1\mbox{,}-3)$] {}
        -- ++(270:1.5cm) node (-2-3) [label=left:$(-2\mbox{,}-3)$] {};

    \draw (31) -- ++(315:2cm) node (21) [label=left:$(2\mbox{,}1)$] {};
    \draw (21) -- ++(330:3cm) node (2-3) [label=right:$(2\mbox{,}-3)$] {};
    \draw (21) -- ++(270:1.5cm) node (2-1) [label=left:$(2\mbox{,}-1)$] {};
    \draw (2-3) -- ++(210:3cm) node (1-3) [label=right:$(1\mbox{,}-3)$] {};
    \draw (-1-2) -- ++(90:1.5cm) node (1-2) [label=right:$(1\mbox{,}-2)$] {};




    \draw (3-2) -- (1-2);
    \draw (3-1) -- (2-1);
    \draw (21) -- (1-2);
    \draw (2-1) -- (-1-2);
    \draw (1-3) -- (-1-3);
    \draw (2-1) -- (2-3);
    \draw (3-1) -- (1-2);
    \node[draw=none] at (2.4,-4.4) {$(\epsilon_1-\epsilon_2)$};
     \draw (3-1) -- (2-3);
     \draw (2-1) -- (1-3);
    \node[draw=none] at (-1.4,-3.7) {2};
    \node[draw=none] at (1.7,-3.7) {2};
    \node[draw=none] at (0,-0.75) {1};
    \node[draw=none] at (-0.7,-2.2) {1};
    \node[draw=none] at (0.7,-2.2) {1};
    \node[draw=none] at (-0.7,-4) {1};
    \node[draw=none] at (1.4,-4) {1};
    \node[draw=none] at (0.7,-4) {1};
    \node[draw=none] at (-2.8,-3.7) {1};
    \node[draw=none] at (2.8,-3.7) {1};   
    \node[draw=none] at (-0.7,-5.5) {1};    
    \node[draw=none] at (0.7,-5.5) {1};
    \node[draw=none] at (2.8,-5.1) {1};
    \node[draw=none] at (-0.7,-6.6) {1};    
    \node[draw=none] at (0.7,-6.6) {1};
    \node[draw=none] at (0,-8) {1};
    \node[draw=none] at (-2.6,-4.4) {$(\epsilon_2-\epsilon_3)$};
    \draw (1-2) -- (1-3);
    \draw (3-2) -- (1-3);
    \node[draw=none] at (-2.8,-3.7) {1}; 
    \node[draw=none] at (-1.4,-5.5) {1};
    \node[draw=none] at (1.4,-5.1) {1};
    \node[draw=none] at (-2,-4.95) {1};


\end{tikzpicture}
\end{center}
\end{figure}

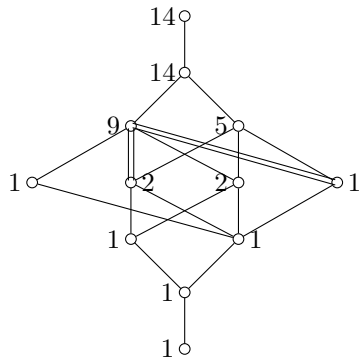
\begin{figure}
 \caption{Degrees of Schubert varieties inside $\I2Gr(2,6)$}
  \label{Figure4}
\begin{center}
 \begin{tikzpicture}
    \tikzstyle{every node}=[draw,circle,fill=white,minimum size=4pt,
                            inner sep=0pt]

    \draw (0,0) node (32) [label=left:$14$] {}
        -- ++(270:0.75cm) node (31) [label=left:$14$] {}
        -- ++(225:1cm) node (3-1) [label=left:$9$] {}
        -- ++(210:1.5cm) node (3-2) [label=left:$1$] {}
         ++(330:1.5cm) node (-1-2) [label=left:$1$] {}
        -- ++(315:1cm) node (-1-3) [label=left:$1$] {}
        -- ++(270:0.75cm) node (-2-3) [label=left:$1$] {};

    \draw (31) -- ++(315:1cm) node (21) [label=left:$5$] {};
    \draw (21) -- ++(330:1.5cm) node (2-3) [label=right:$1$] {};
    \draw (21) -- ++(270:0.75cm) node (2-1) [label=left:$2$] {};
    \draw (2-3) -- ++(210:1.5cm) node (1-3) [label=right:$1$] {};
    \draw (-1-2) -- ++(90:0.75cm) node (1-2) [label=right:$2$] {};




    \draw (3-1) -- (2-1);
    \draw (21) -- (1-2);
    \draw (2-1) -- (-1-2);
    \draw (1-3) -- (-1-3);
    \draw (2-1) -- (1-3);
    \draw (-0.74,-1.49) -- (-0.74,-2.18);
    \draw (-0.67,-1.49) -- (-0.67,-2.18);
    \draw (-0.685,-1.48) -- (1.98,-2.24);
    \draw (-0.685,-1.42) -- (1.98,-2.17);
    \draw (1-2) -- (1-3);
    \draw (3-2) -- (1-3);


\end{tikzpicture}
\end{center}
\end{figure}

From the equivariant cohomology, one can recover the classical cohomology of $\I2Gr(2,6)$ (Theorem \ref{thmeqclasscohomgen}). We will use the following notations: 
\[
\sigma_1:=\sigma_{3,1} \mbox{ , }\sigma_2:=\sigma_{2,1} \mbox{ , }\sigma_3:=\sigma_{3,-2} \mbox{ , }\sigma_3':=\sigma_{2,-3},
\]
with 
\[
\deg(\sigma_1)=14 \mbox{ , }\deg(\sigma_2)= 5\mbox{ , }\deg(\sigma_3)= 1\mbox{ , }\deg(\sigma_3')= 1.
\]

\begin{theorem}
\label{thm2cohok2n6}
A presentation of the cohomology of the bisymplectic Grassmannian $\I2Gr(2,6)$ is given by: 
\[
\HHH^*(\I2Gr(2,6),\ZZ)\cong \ZZ[\sigma_1,\sigma_2,\sigma_3,\sigma_3']/I,
\]
where $I$ is the ideal generated by the following elements:
\[
\begin{array}{cccccc}
2\sigma_1^4-2\sigma_1^2\sigma_2-3\sigma_1\sigma_3'  &  &  ,&  &  \sigma_2\sigma_3'&  ,\\
 \sigma_1\sigma_3-\sigma_1\sigma_3' &  &  ,&  &  \sigma_3\sigma_3'-\sigma_1^3\sigma_3'&  ,\\
 \sigma_2^2-\sigma_1^4+2\sigma_1^2\sigma_2+2\sigma_1\sigma_3' &  &  ,&  &  \sigma_3^2&  ,\\
 \sigma_1^5-14\sigma_1^2\sigma_3' &  &  ,&  &  \sigma_3'^2&  ,\\
 \sigma_2\sigma_3 &  &  ,&  & \sigma_1^4\sigma_3'&  .\\
 \end{array}
\]
\end{theorem}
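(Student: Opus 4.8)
The strategy is to use the explicit additive basis of Schubert classes, together with the Chevalley formula of Theorem \ref{thmChevform}, to compute enough products in $\HHH^*(\I2Gr(2,6),\ZZ)$ to (a) show that the four classes $\sigma_1,\sigma_2,\sigma_3,\sigma_3'$ generate the ring, and (b) verify that all the listed polynomials vanish, and then a dimension/rank count to show no further relations are needed. Concretely, recall from the Betti numbers $S_{2,3}=(1,1,2,4,2,1,1,0,\dots)$ that $\HHH^*(\I2Gr(2,6))$ has total rank $12$, with graded pieces of ranks $1,1,2,4,2,1,1$ in (complex) codimensions $0,\dots,6$. The Schubert basis is indexed by the admissible subsets appearing in Figures \ref{Figure3} and \ref{Figure4}: in codimension $1$ we have $\sigma_{3,1}=\sigma_1$; in codimension $2$, $\sigma_{2,1}=\sigma_2$ and $\sigma_{3,-1}$; in codimension $3$, $\sigma_{3,-2}=\sigma_3$, $\sigma_{2,-3}=\sigma_3'$, $\sigma_{1,-2}$, $\sigma_{2,-1}$; in codimension $4$, $\sigma_{1,-3}$, $\sigma_{-1,-2}$; in codimension $5$, $\sigma_{-1,-3}$; in codimension $6$, the point class $\sigma_{-2,-3}$.

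First I would use the classical (non-equivariant) Chevalley formula — obtained from Theorem \ref{thmChevform} by setting $\epsilon_i=0$, i.e. keeping only the constant coefficients $a_{I,J}$ displayed in Figure \ref{Figure3} — to express every Schubert class as a polynomial in $\sigma_1,\sigma_2,\sigma_3,\sigma_3'$. Multiplying by $\sigma_1=\sigma_{3,1}$ walks down the Hasse diagram: from $\sigma_1$ one reaches $\sigma_2$ and $\sigma_{3,-1}$, so $\sigma_{3,-1}=\sigma_1^2-\sigma_2$ (up to checking the constant coefficients, which are both $1$ here); continuing, $\sigma_1\cdot\sigma_2$ and $\sigma_1\cdot\sigma_{3,-1}$ produce the codimension $3$ classes, giving each of $\sigma_{1,-2}$, $\sigma_{2,-1}$ as explicit combinations of $\sigma_1^3,\sigma_1\sigma_2,\sigma_3,\sigma_3'$, and so on down to the point class. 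This simultaneously shows surjectivity of $\ZZ[\sigma_1,\sigma_2,\sigma_3,\sigma_3']\to\HHH^*(\I2Gr(2,6))$ and lets me read off, in each degree, the relations: any monomial in the generators that is not accounted for by the rank of that graded piece must equal a specific combination of basis classes, and the differences are exactly the listed generators of $I$. For instance $\sigma_2\sigma_3'$ lands in codimension $5$ (rank $1$, spanned by $\sigma_{-1,-3}$) and a direct Chevalley computation shows the coefficient is $0$; $\sigma_3^2$, $\sigma_3'^2$, $\sigma_3\sigma_3'-\sigma_1^3\sigma_3'$ live in codimension $6$ (rank $1$, spanned by the point), where the Chevalley/Poincaré-duality pairing pins down every product numerically (this is where the degrees in Figure \ref{Figure4}, e.g. $\deg\sigma_3=\deg\sigma_3'=1$, $\deg\sigma_2=5$, $\deg\sigma_1=14$, enter); the top relation $\sigma_1^4\sigma_3'$ is forced since codimension $7$ is zero.

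To finish, I would argue that the ten listed elements generate the \emph{whole} kernel, not just a subideal. The clean way is a Hilbert-series / rank argument: let $A=\ZZ[\sigma_1,\sigma_2,\sigma_3,\sigma_3']/I$ with the grading $\deg\sigma_1=1$, $\deg\sigma_2=2$, $\deg\sigma_3=\deg\sigma_3'=3$; using the ten relations one reduces any monomial to a normal form supported on a finite explicit list, and one checks this list has exactly $1,1,2,4,2,1,1$ elements in degrees $0,\dots,6$ and nothing in higher degree, matching the Betti numbers of $\I2Gr(2,6)$. Since the surjection $A\twoheadrightarrow\HHH^*(\I2Gr(2,6))$ is then a surjection between free $\ZZ$-modules of the same finite rank in each degree, it is an isomorphism. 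The main obstacle I anticipate is the bookkeeping in the middle degrees $3$ and $4$: there the Chevalley formula of Theorem \ref{thmChevform} involves the non-constant coefficients $a_{I,J}=\epsilon_{i-1}-\epsilon_i$ and $a_{I,J}=2$, so one must be careful that passing to ordinary cohomology (killing the $\epsilon_i$) genuinely produces the stated integer structure constants — in particular the appearance of the coefficient $14$ in $\sigma_1^5-14\sigma_1^2\sigma_3'$ and of $2$ and $3$ in $\sigma_2^2-\sigma_1^4+2\sigma_1^2\sigma_2+2\sigma_1\sigma_3'$ and $2\sigma_1^4-2\sigma_1^2\sigma_2-3\sigma_1\sigma_3'$ must be tracked through several iterations of the formula, and it is easy to make sign or normalization errors when there are two distinct codimension-$3$ classes $\sigma_3,\sigma_3'$ that the Chevalley graph treats nearly symmetrically but not quite (cf. the asymmetry $\sigma_1\sigma_3=\sigma_1\sigma_3'$ but $\sigma_2\sigma_3=0\neq\sigma_2\sigma_3'$... wait, both vanish — rather $\sigma_3\sigma_3'=\sigma_1^3\sigma_3'$ while $\sigma_3^2=\sigma_3'^2=0$). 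Verifying $\deg\I2Gr(2,6)=\langle\sigma_1^6,[\pt]\rangle=14=\deg\Gr(2,6)$ is a useful independent consistency check on the whole computation.
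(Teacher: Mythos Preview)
Your overall architecture---generate, verify the relations, then count ranks---is sound, and the first half (expressing every Schubert class as a polynomial in $\sigma_1,\sigma_2,\sigma_3,\sigma_3'$ via iterated classical Chevalley) works exactly as you describe and matches the paper. The gap is in the second half: you cannot verify the six relations not involving $\sigma_1$ using only the classical Chevalley formula and Poincar\'e duality.

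The point is that the classical Chevalley formula of Theorem~\ref{thmChevform} with $\epsilon_i=0$ gives you nothing but the operator ``multiply by $\sigma_1$'' on the Schubert basis. From this you can compute every product of the form $\sigma_1^a\cdot\sigma_I$; in particular you recover all the degrees. But the products $\sigma_2^2,\ \sigma_2\sigma_3,\ \sigma_2\sigma_3',\ \sigma_3^2,\ \sigma_3'^2,\ \sigma_3\sigma_3'$ are \emph{not} of this form, and associativity plus Poincar\'e duality does not pin them down. Concretely: $\sigma_3^2$ is some integer multiple $c\cdot[\pt]$; multiplying by $\sigma_1$ lands in codimension $7=0$, so you get no equation for $c$. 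Likewise $\sigma_3\sigma_3'=d\cdot[\pt]$ is unconstrained: $(\sigma_1\sigma_3)\sigma_3'=\sigma_{1,-3}\sigma_3'$ is again in codimension $7$. More structurally, in the Lefschetz decomposition the codimension-$3$ primitive part is two-dimensional and the intersection form restricted to it (three integers) is invisible to the $\sigma_1$-module structure. Knowing only that the full Poincar\'e pairing is unimodular and that $M\cdot(\sigma_1^3)=(\deg\sigma_I)_I$ leaves infinitely many symmetric integer matrices $M$.

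This is precisely why the paper does \emph{not} set $\epsilon_i=0$ at this stage. It uses the full equivariant Chevalley formula (with the degree-one coefficients $t_{i-1}-t_i$) and Corollary~\ref{thm1coho22n} to compute each localized class $f_I\in\CC[\epsilon_1,\epsilon_2,\epsilon_3]^{\oplus 12}$ explicitly; once you have those polynomials, the products $\sigma_2^2,\sigma_2\sigma_3,\dots$ are read off as identities among the $f_I$'s (the displayed list in the proof), and \emph{then} one specializes $\epsilon_i\to 0$ to obtain the classical relations. So the fix for your argument is not merely bookkeeping: you must keep the equivariant parameters alive long enough to compute those six products, and only afterwards pass to ordinary cohomology. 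With that change, your rank-count finish is correct.
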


\begin{proof}
\enlargethispage{\baselineskip}
First, we prove that $\sigma_1,\sigma_2,\sigma_3,\sigma_3'$ generate the cohomology by showing that they generate all the Schubert classes $\sigma_I$. This is a consequence of the following formulas, which can be derived directly from Figure \ref{Figure4}:
\[
\begin{array}{c}
\sigma_{(3,-1)}=\sigma_1^2-\sigma_2,\\
\sigma_{(2,-1)}= 3\sigma_1\sigma_2-\sigma_1^3+\sigma_3,\\
\sigma_{(1,-2)}= \sigma_1^3-2\sigma_1\sigma_2-\sigma_3-\sigma_3',\\
\sigma_{(-1,-2)}= \sigma_1^4-2\sigma_1^2\sigma_2-3\sigma_1\sigma_3',\\
\sigma_{(1,-3)}=\sigma_1\sigma_3' ,\\
\sigma_{(-1,-3)}=\sigma_1^2\sigma_3' ,\\
\sigma_{(-2,-3)}=\sigma_1^3\sigma_3' .\\
\end{array}
\]
The relations generating $I$ involving the product of $\sigma_1$ with other classes can be derived from Figure \ref{Figure4} too. For the remaining relations, they can be derived from the following identities, which hold in the equivariant cohomology, and can be verified by computing explicitly the classes $\sigma_I$:
\[
\begin{array}{c}
\sigma_2^2=\sigma_2(\epsilon_3-\epsilon_1)(\epsilon_3-\epsilon_2)+\sigma_{(1,-2)}(\epsilon_3-\epsilon_1)+\sigma_{(2,-1)}(\epsilon_3-\epsilon_2)+\\
+\sigma_3'(\epsilon_3-\epsilon_2)+ \sigma_1\sigma_{(1,-2)},\\
\sigma_2\sigma_3=(\epsilon_2+\epsilon_3)(\sigma_{(1,-2)}(\epsilon_2-\epsilon_3)+\sigma_{(1,-3)}) ,\\
\sigma_2\sigma_3'=2\epsilon_3(\sigma_3'(\epsilon_3-\epsilon_2)+\sigma_{(1,-3)}) ,\\
\sigma_3\sigma_3'=\sigma_{(-2,-3)} ,\\
\sigma_3^2=2\epsilon_2(\sigma_3(\epsilon_1+\epsilon_2)(\epsilon_2-\epsilon_1)+\sigma_{(1,-2)}(\epsilon_1+\epsilon_3)(\epsilon_3-\epsilon_2)+\\
-\sigma_{(-1,-2)}(\epsilon_3-\epsilon_2)-\sigma_{(1,-3)}(\epsilon_1+\epsilon_3)+\sigma_{(-1,-3)}) ,\\
 \sigma_3'^2=2\epsilon_3(\sigma_3'(\epsilon_3-\epsilon_1)(\epsilon_3+\epsilon_1)+\sigma_{(1,-3)}(\epsilon_1+\epsilon_2)-\sigma_{(-1,-3)}) .
\end{array}
\]
We have verified that these are all the relations inside $I$ by showing that they generate all products involving $\sigma_1,\sigma_2,\sigma_3,\sigma_3'$.
\end{proof}

\begin{remark}
\label{rempoincarebisym}
The basis given by the Schubert classes inside $\I2Gr(2,6)$ is not self-dual with respect to the intersection product. For instance, the non zero products of codimension $3$ Schubert classes are as follows:
\[
\begin{array}{c}
\sigma_{(3,-2)}\sigma_{(2,-3)}=1 ,\\
\sigma_{(1,-2)}\sigma_{(2,-1)}=1 ,\\
\sigma_{(3,-2)}\sigma_{(2,-1)}=-1 .
\end{array}
\]
A self-dual basis in codimension $3$ would be given by $\sigma_{(3,-2)}, \sigma_{(2,-3)}, \sigma_{(1,-2)}, \sigma_{x}=\sigma_{(2,-1)}+\sigma_{(2,-3)}$. In this basis, the degree diagram is the one shown in Figure \ref{Figure5}. Notice that the diagram is symmetric with respect to a central reflection; this is a consequence of the fact that the additive basis chosen is self-dual.
\end{remark}

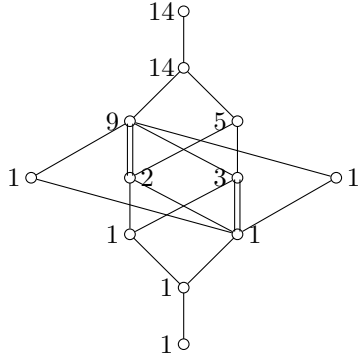
\begin{figure}
 \caption{Degree of classes in a self-dual basis; the codimension $3$ classes are, from left to right: $\sigma_{(3,-2)},  \sigma_{(1,-2)}, \sigma_{x},\sigma_{(2,-3)}$}
  \label{Figure5}
\begin{center}
 \begin{tikzpicture}
    \tikzstyle{every node}=[draw,circle,fill=white,minimum size=4pt,
                            inner sep=0pt]

    \draw (0,0) node (32) [label=left:$14$] {}
        -- ++(270:0.75cm) node (31) [label=left:$14$] {}
        -- ++(225:1cm) node (3-1) [label=left:$9$] {}
        -- ++(210:1.5cm) node (3-2) [label=left:$1$] {}
         ++(330:1.5cm) node (-1-2) [label=left:$1$] {}
        -- ++(315:1cm) node (-1-3) [label=left:$1$] {}
        -- ++(270:0.75cm) node (-2-3) [label=left:$1$] {};

    \draw (31) -- ++(315:1cm) node (21) [label=left:$5$] {};
    \draw (21) -- ++(270:0.75cm) node (2-1) [label=left:$3$] {};
    \draw (-1-3) -- ++(45:1cm) node (1-3) [label=right:$1$] {};
    \draw (1-3) -- ++(30:1.5cm) node (2-3) [label=right:$1$] {};
    \draw (-1-2) -- ++(90:0.75cm) node (1-2) [label=right:$2$] {};




    \draw (3-1) -- (2-1);
    \draw (21) -- (1-2);
    \draw (2-1) -- (-1-2);
    \draw (3-1) -- (2-3);
    \draw (-0.74,-1.49) -- (-0.74,-2.18);
    \draw (-0.67,-1.49) -- (-0.67,-2.18);
    \draw (0.74,-2.23) -- (0.74,-2.92);
    \draw (0.67,-2.23) -- (0.67,-2.92);
    \draw (1-2) -- (1-3);
    \draw (3-2) -- (1-3);


\end{tikzpicture}
\end{center}
\end{figure}

\begin{remark}
\label{rmsymmetriesbisym}
The group of permutations $\mathfrak{S}_n$ acts on the cohomology of the bisymplectic Grassmannians, even though it does not act on the varieties themselves; the action is a consequence of a monodromy phenomenon.

Let $X$ be a bisymplectic Grassmannian $\I2Gr(k,2n)$ defined by the forms 
\[
\omega_1=\sum_{i=1}^{n} x_i\wedge x_{-i} \mbox{ and }\omega_2=\sum_{i=1}^{n} \lambda_i x_i\wedge x_{-i}.
\]
Let $\eta$ be an element of the group of permutations $\mathfrak{S}_n$. There exists a curve $\gamma$ inside the space of pencils of bisymplectic forms that goes from $\Omega=\langle \omega_1,\omega_2\rangle$ to $\eta.\Omega=\langle\omega_1,\eta.\omega_2\rangle$, where
\[
\eta.\omega_2=\sum_{i=1}^{n} \lambda_{\eta(i)} x_i\wedge x_{-i}.
\]
Following the curve, one obtains a continuous deformation $\gamma$ such that $\gamma(0)=X=\gamma(1)$, and which sends a Schubert variety $\sigma_I$ to $\eta.\sigma_I$, where the action on $\sigma_I$ is induced by the one of $\mathfrak{S}_n$ on the pencils. As the cohomology is locally constant, the action on Schubert varieties induces an action in cohomology. In the following we show concretely what it means in the case when $k=2,n=3$.

As the irreducible representations of $\mathfrak{S}_3$ given by Schubert classes with codimension different from $3$ are only $1$-dimensional, we will focus on codimension $3$ Schubert varieties. They admit the following explicit description:
\[
\alpha_2:=\sigma_{(3,-2)}= v_{-2} \wedge \PP(\langle v_{\pm 3},v_{\pm 1} \rangle),
\]
\[
\beta_1:=\sigma_{(1,-2)}=\{x\in \PP(\langle v_{-2},v_{-3}\rangle)\wedge \PP(\langle v_{\pm 1}, v_{-2}, v_{-3}\rangle) \mbox{ s.t. }x\neq 0\},
\]
\[
\beta_2:=\sigma_{(2,-1)}=\{x\in \PP(\langle v_{-1},v_{-3}\rangle)\wedge \PP(\langle v_{\pm 2}, v_{-1}, v_{-3}\rangle) \mbox{ s.t. }x\neq 0\},
\]
\[
\alpha_3:=\sigma_{(2,-3)}=v_{-3} \wedge \PP(\langle v_{\pm 2},v_{\pm 1} \rangle).
\]
Moreover, inside the cohomology of $\I2Gr(2,6)$ there are two more remarkable varieties:
\[
\alpha_1:= v_{-1} \wedge \PP(\langle v_{\pm 3},v_{\pm 2} \rangle),
\]
\[
\beta_3:=\{x\in \PP(\langle v_{-1},v_{-2}\rangle)\wedge \PP(\langle v_{\pm 3}, v_{-1}, v_{-2}\rangle) \mbox{ s.t. }x\neq 0\}.
\]
Actually, there are also varieties $\alpha_{-1}, \alpha_{-2}, \alpha_{-3}, \beta_{-1}, \beta_{-2}, \beta_{-3}$, but one can prove that in cohomology $\alpha_{i}=\alpha_{-i}$ and $\beta_{i}=\beta_{-i}$ for $i=1,2,3$. The action of $\mathfrak{S}_3$ on the $\alpha_i$'s and the $\beta_i$'s is the expected one. By using the products of the codimension $3$ Schubert varieties and the symmetries given by $\mathfrak{S}_3$, one can prove that
\[
\alpha_1-\alpha_2=\beta_1 - \beta_2,
\]
\[
\alpha_2-\alpha_3=\beta_2 - \beta_3.
\]
To summarize, the action of $\mathfrak{S}_3$ on $\HHH^i(\I2Gr(2,6),\ZZ)$ is trivial if $i\neq 6$, and $\HHH^6(\I2Gr(2,6),\ZZ)$ decomposes in the sum of two trivial representations generated by the classes of $\sigma_H^3=\alpha_2+3\beta_1+2\beta_2+3\alpha_3$ and $\sigma_{(2,1)}\sigma_H=\beta_1+\beta_2+\alpha_3$, and one natural $2$-dimensional representation given by the action on $\langle \alpha_1,\alpha_2,\alpha_3 \rangle$, with $\alpha_1+\alpha_2+\alpha_3=0$.
\end{remark}

\bibliographystyle{alpha}
\bibliography{bibliiotbisymGrass}

\makeatletter
\providecommand\@dotsep{5}
\makeatother

\listoftodos 

\end{document}